\pgfplotsset{compat=1.11}
 \newcommand{\R}{\mathbb{R}}
\theoremstyle{plain}
\newtheorem{theorem}{Theorem}
\newtheorem{claim}[theorem]{Claim}
\newtheorem{corollary}[theorem]{Corollary}
\newtheorem{lemma}[theorem]{Lemma}
\newtheorem{conjecture}[theorem]{Conjecture}
\theoremstyle{definition}
\newtheorem{definition}[theorem]{Definition}
\newtheorem{remark}[theorem]{Remark}
\numberwithin{equation}{section}
\numberwithin{theorem}{section}
\begin{document}

\title[Branch points for (almost-)Minimizers]{Branch points for (almost-)Minimizers of Two-Phase Free Boundary Problems}
\author[G. David, M. Engelstein, M. Smit Vega Garcia \& T. Toro]{Guy David, Max Engelstein, Mariana Smit Vega Garcia \& Tatiana Toro}
\thanks{
G.D. was partially supported by the ANR, programme blanc GEOMETRYA, ANR-12-BS01-0014, the European H2020 Grant GHAIA 777822, and the Simons Collaborations in MPS Grant 601941, GD.
M.E. has been partially supported by the NSF grant DMS 2000288. M.S.V.G. has been partially supported by the NSF grant DMS 2054282. 
T.T. was partially supported by the Craig McKibben \& Sarah Merner Professor in Mathematics, and by NSF grant DMS-1954545
}
\subjclass[2020]{35R35, }
\keywords{branch points, almost-minimizers, two-phase free boundary problems Alt-Caffarelli functional, Alt-Caffarelli-Friedman}
\address{
Laboratoire de Math\'ematiques d'Orsay, Univ. Paris-Sud, CNRS, Universit\'e Paris-Saclay, 
91405 Orsay, France.}
\email{Guy.David@math.u-psud.fr}
\address{School of Mathematics,\\ University of Minnesota,\\ Minneapolis,
MN, 55455, USA.}
\email{mengelst@umn.edu}
\address{Western Washington University \\ Department of Mathematics \\
BH 230\\
Bellingham, WA 98225, USA}
\email{Mariana.SmitVegaGarcia@wwu.edu}
\address{Department of Mathematics\\ University of Washington\\ Box 354350\\ Seattle, WA 98195-4350}
\email{toro@uw.edu}
\date{\today}

\maketitle

\begin{abstract}
We study the existence and structure of branch points in two-phase free boundary problems. More precisely, we construct a family of minimizers to an Alt-Caffarelli-Friedman type functional whose free boundaries contain branch points in the strict interior of the domain. We also give an example showing that branch points in the free boundary of almost-minimizers of the same functional can have very little structure. This last example stands in contrast with recent results of De Philippis-Spolaor-Velichkov on the structure of branch points in the free boundary of stationary solutions. 
\end{abstract}

\section{Introduction}\label{s:Intro}

In this paper we study the structure of ``branch points" in the free boundary of minimizers of Alt-Caffarelli-Friedman type functionals (see \eqref{e:ACF} below). In particular, we show the existence of minimizers to the two-phase functional whose zero set contains an open subset (of positive measure) which stays far away from the fixed boundary of the domain. Relatedly, the free boundary of this minimizer also contains branch or cusp points (c.f. \eqref{e:BP}). We also show, in contrast with recent results for critical points to \eqref{e:ACF} in \cite{DeSpVePreprint}, that the set of branch points in the free boundary of almost-minimizers to \eqref{e:ACF} can have fractal like structure. 

Alt, Caffarelli and Friedman, in \cite{AlCaFr84}, gave the first rigorous mathematical treatment of the two-phase energy
\begin{equation}\label{e:ACF}
J_\Omega(u)=\int_{\Omega}|\nabla u|^2 dx+ \lambda_+^2|\{u>0\}\cap\Omega|+\lambda_-^2|\{u<0\}\cap\Omega|
\end{equation}
where $\Omega\subset\R^n$ is a domain with locally Lipschitz boundary, and $\lambda_{\pm} > 0$\footnote{\cite{AlCaFr84} actually considered a related functional which also weighs the zero set, 
but \eqref{e:ACF} captures the essence of the more general functional, and for
almost minimizers the general case can be transformed to \eqref{e:ACF} through a standard change of notation.
}.

This is a two-phase analogue of the one-phase free boundary problem (also called the Bernoulli problem) studied in \cite{AlCa81}, first introduced to model the flow of two liquids in jets and cavities but later found to have applications to a variety of problems including eigenvalue optimization, c.f. \cite[Corollary 1.3]{DeSpVe21}. 

We say that $u$ is a {\it local minimizer} of $J$ in $\Omega$ if $J_D(u) \leq J_D(v)$ for all open $D$ with $\overline{D} \subset \Omega$ and all $v\in W^{1,2}(\Omega)$ with $u = v$ in $\Omega \backslash \overline{D}$. Alternatively, given some subset $S\subset \partial \Omega$ and continuous data $\varphi \in C(S)$ we say that $u$ minimizes $J_\Omega$ for the data $\varphi$ if $u = \varphi$ on $S$ and for any $v\in W^{1,2}(\Omega)$ with $v = \varphi$ on $S$ we have $J_\Omega(u) \leq J_\Omega(v)$. If the data $\varphi$ is not important, we simply say that $u$ is a minimizer of $J_\Omega$. We note that if $u$ minimizes $J_\Omega$ for some data $\varphi$, then $u$ is a local minimizer of $J$ in $\Omega$.

Given a (local) minimizer, $u$, of particular interest are the free boundaries, $\Gamma^\pm(u) = \partial \{\pm u > 0\}$. When $\Gamma^+\cap \Gamma^- = \emptyset$, each of $\Gamma^{\pm}$ is the free boundary of a minimizer to an associated one-phase problem and thus have well understood regularity (c.f. \cite{AlCa81}). On the other hand, when $\Gamma^+ = \Gamma^-$, the free boundary regularity is also well understood, first when $n=2$ in \cite{AlCaFr84}, and later by Caffarelli (\cite{Caf1, Caf2, Caf3}; see also the book \cite{CafBook}) and De Silva-Ferrari-Salsa (see e.g. \cite{DeFeSa14, DeFeSaReg} and the recent survey article \cite{DeFeSaSurvey}). Until recently, the only missing piece of the picture was the behavior of $\Gamma^{\pm}$ in neighborhoods where the two sets are not disjoint but also not identical. To be more precise, define the points in the intersection of $\Gamma^{\pm}$ as two-phase points;  $\Gamma_{\mathrm{TP}}(u) := \Gamma^+ \cap \Gamma^-$. Points which are in one of $\Gamma^{\pm}$ but not both are one-phase points;  $\Gamma_{\mathrm{OP}}(u) := \Gamma^+\cup \Gamma^- \setminus (\Gamma^+\cap \Gamma^-)$. It was a long open question how the free boundary behaved around branch points, that is, points around which the free boundary contains both one-phase points and two-phase points at every scale;  \begin{equation}\label{e:BP} \Gamma_{\mathrm{BP}}(u) := \Gamma_{\mathrm{TP}}(u) \cap \overline{\Gamma_{\mathrm{OP}}(u)}.\end{equation}

This open question was finally resolved in the recent work of De Philippis, Spolaor and Velichkov \cite{DeSpVe21} (see also \cite{SpVe19} when $n=2$):

\begin{theorem}(Main Theorem in \cite{DeSpVe21})\label{t:DSV}
Let $u$ be a (local) minimizer to the energy in \eqref{e:ACF} in $\Omega$ with $\lambda_{\pm} > 0$. Then for every $x_0 \in \Gamma^+ \cap \Gamma^-\cap \Omega$ there exists an $r_0 > 0$ such that both $\Gamma^+\cap B(x_0, r_0)$ and $\Gamma^-\cap B(x_0, r_0)$ are $C^{1,1/2}$-graphs.
\end{theorem}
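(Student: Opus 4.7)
The strategy is to show that near any two-phase point the free boundaries $\Gamma^\pm$ are well-approximated by a single hyperplane with a quantitative (Hölder) rate, which via standard flatness-implies-regularity then upgrades to $C^{1,1/2}$. I would carry this out in three steps: blowup analysis, classification of blowups, and an epiperimetric inequality to control the rate of convergence to the tangent.

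\textbf{Step 1: blowup analysis and classification.} Fix $x_0\in \Gamma^+ \cap \Gamma^-\cap \Omega$ and consider the rescalings $u_{x_0,r}(x) := r^{-1}u(x_0+rx)$. The interior Lipschitz and nondegeneracy estimates from \cite{AlCaFr84} allow me to extract subsequential locally uniform limits, each of which is a global minimizer $u_0$ of $J$ with $0\in \Gamma^+(u_0)\cap \Gamma^-(u_0)$. Apply the Alt--Caffarelli--Friedman monotonicity formula at $0$ to $u_0$; since blowups are self-similar, the monotone quantity is constant, so the ACF inequality is an equality. This forces $u_0^\pm$ to be linear in complementary half-spaces, so $u_0(x)=\alpha(x\cdot\nu)_+-\beta(x\cdot\nu)_-$ with $\alpha,\beta>0$ satisfying the Euler--Lagrange condition $\alpha^2-\beta^2=\lambda_+^2-\lambda_-^2$. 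In particular, no blowup can have $\{u_0=0\}$ of positive measure; branch points in the sense of \eqref{e:BP} are ruled out at two-phase points, once one shows the blowup is unique.

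\textbf{Step 2: uniqueness of blowups via an epiperimetric inequality.} Step 1 gives convergence only along subsequences; the direction $\nu$ and amplitudes $(\alpha,\beta)$ could a priori rotate and drift as $r\downarrow 0$. To prevent this I would prove a Weiss-type epiperimetric inequality at two-phase points: there exist $\varepsilon,\delta>0$ such that for every boundary datum $c\in H^1(\partial B_1)$ with $\|c-h\|_{H^1}<\delta$ for some two-plane trace $h$, one has
\begin{equation*}
W(c)-W(h) \;\le\; (1-\varepsilon)\bigl(W(c_{\mathrm{hom}})-W(h)\bigr),
\end{equation*}
where $c_{\mathrm{hom}}$ denotes the $1$-homogeneous extension of $c$ and $W$ is the Weiss energy associated to \eqref{e:ACF}. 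Applied to $u_{x_0,r}$, this becomes a differential inequality in $r$ for the distance of $u_{x_0,r}$ from the set $\mathcal H$ of two-plane solutions, giving algebraic decay $\mathrm{dist}(u_{x_0,r},\mathcal H)\lesssim r^{1/2}$; in particular the blowup direction $\nu(x_0)$ is unique and the map $x_0\mapsto \nu(x_0)$ is Hölder-$\tfrac12$ continuous along $\Gamma_{\mathrm{TP}}\cap B(x_0,r_0)$. The exponent $1/2$ is exactly what the constant $\varepsilon$ in the epiperimetric inequality produces after integration.

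\textbf{Step 3: flatness implies $C^{1,1/2}$; main obstacle.} Once the tangent direction is unique with a $\tfrac12$-Hölder modulus, both $\Gamma^+\cap B(x_0,r)$ and $\Gamma^-\cap B(x_0,r)$ lie in a quantitatively thin neighborhood of a single hyperplane. The classical viscosity flatness-implies-regularity theory (\cite{Caf1,Caf2,Caf3,DeFeSa14,DeFeSaReg}) then upgrades each $\Gamma^\pm$ separately to a $C^{1,\alpha}$ graph, and inserting the precise $r^{1/2}$ rate from Step 2 gives the $C^{1,1/2}$ bound. The hard part of the whole proof is Step 2: the family of two-plane solutions is two-parameter constrained by $\alpha^2-\beta^2=\lambda_+^2-\lambda_-^2$, so a direct Fourier construction of an energy-decreasing competitor loses definiteness along this constraint. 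One must build a competitor by deforming a near-minimizer toward its closest two-plane projection on the constraint manifold while preserving the free-boundary jump condition; this is what forces the exponent $1/2$. A further subtlety is that one must rule out a priori that one-phase points of $\Gamma_{\mathrm{OP}}$ cluster at $x_0$ inside $\{u=0\}$, since otherwise the $H^1$-closeness hypothesis of the epiperimetric inequality can fail at every scale and the iteration breaks down.
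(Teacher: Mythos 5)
This theorem is not proved in the paper at all: it is quoted verbatim as the Main Theorem of De Philippis--Spolaor--Velichkov \cite{DeSpVe21}, so there is no internal proof to compare your sketch against. Judged against the actual argument in \cite{DeSpVe21}, your outline follows a genuinely different route (Weiss epiperimetric inequality) from theirs (viscosity improvement of flatness with linearization to a two-membrane problem), and it contains gaps that are not merely technical.

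First, your Step 3 assumes away the central difficulty. Near a branch point, $\Gamma^+$ contains both two-phase points, where the free boundary condition couples $u^+$ and $u^-$ through the jump relation, and one-phase points, where $|\nabla u^+|=\lambda_+$; the classical flat-implies-$C^{1,\alpha}$ theory of \cite{Caf1,Caf2,Caf3,DeFeSa14} applies either to the pure one-phase problem or to the regime $\Gamma^+=\Gamma^-$, and neither applies on a neighborhood of a branch point. The whole point of \cite{DeSpVe21} is to prove a new improvement-of-flatness for this mixed regime via a partial Harnack inequality and compactness, with the linearized problem being a two-membrane problem whose solutions are two ordered harmonic functions; the regularity of that obstacle-type system is what yields the $C^{1,1/2}$ exponent. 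Second, the epiperimetric inequality you propose in Step 2 is only known for the two-phase problem in dimension $n=2$ (this is precisely \cite{SpVe19}); in general dimension it is open, which is why \cite{DeSpVe21} avoids it, so your plan does not deliver the stated theorem for $n\ge 3$. Even granting it, an epiperimetric constant $\varepsilon$ yields decay $r^{\gamma}$ with $\gamma=\gamma(\varepsilon)$ generically small, not the specific exponent $1/2$. Finally, your remark in Step 1 that the blowup classification ``rules out branch points'' is confused: branch points are by definition two-phase points, their blowups are still two-plane solutions $\alpha(x\cdot\nu)_+-\beta(x\cdot\nu)_-$, and the existence of such points in the interior is exactly what the present paper goes on to establish.
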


We note that Theorem \ref{t:DSV} is most interesting around branch points, i.e. $x_0 \in \Gamma_{\mathrm{BP}(u)}\cap \Omega$. However, left open in \cite{DeSpVe21} is whether branch points actually exist in the strict interior of a domain, or more precisely, does there exist a minimizer $u$ in $\Omega$ such that $\Gamma_{\mathrm{BP}}(u)\cap \Omega \neq \emptyset$. Here we resolve that open question when $\lambda_+ = \lambda_- = 1$:

\begin{theorem}(Main Theorem)\label{t:main}
There exists a domain $\Omega \subset \mathbb R^2$ and a minimizer $u$ to $J_\Omega$ with $\lambda_+ = \lambda_- = 1$ such that $\Gamma_{\mathrm{BP}}(u)\cap \Omega \neq \emptyset$. Even stronger, there exists a ``pool" of zeroes: 
a (nonempty) connected component $\mathcal O$ of $\{u = 0\}$ 
such that 
$\overline{\mathcal O}  \subset \subset \Omega$ and $\partial \mathcal O \cap \Gamma^\pm \neq \emptyset$.

\end{theorem}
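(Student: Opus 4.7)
The plan is to construct the desired minimizer explicitly, exploiting a decoupling of the two-phase ACF energy into one-phase pieces. For any $w \in W^{1,2}(\Omega)$, writing $w = w^+ - w^-$ with $w^\pm = \max(\pm w, 0)$, the gradients $\nabla w^+$ and $\nabla w^-$ are essentially supported on the disjoint open sets $\{w>0\}$ and $\{w<0\}$, so
\[
J_\Omega(w) \,=\, J^+_\Omega(w^+) + J^-_\Omega(w^-),
\]
where $J^\pm_\Omega$ are the associated one-phase Alt--Caffarelli functionals. Hence any pair of one-phase minimizers $(v^+_*, v^-_*)$ for $J^\pm_\Omega$ with boundary data $\varphi^\pm = \max(\pm\varphi,0)$ whose positivity sets happen to be disjoint produces a two-phase minimizer $u = v^+_* - v^-_*$ of $J_\Omega$ with boundary data $\varphi$.

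The starting point would be to take $\Omega = \{r_0 < |x| < r_1\} \subset \R^2$ an annulus and $\varphi \equiv M$ on the outer circle, $\varphi \equiv -M$ on the inner circle, with $M > 0$ small. By rotational invariance combined with standard Alt--Caffarelli theory and a uniqueness/symmetrization argument for one-phase minimizers with radial data on radial domains, the one-phase minimizers $v^\pm_*$ are radial: explicitly, $v^+_*(r) = M \log(r/r^*)/\log(r_1/r^*)$ for $r\geq r^*$ (and zero otherwise), with $r^*$ determined by the Aronsson free-boundary condition $r^*\log(r_1/r^*) = M$, and symmetrically for $v^-_*$ with $r_\dagger \log(r_\dagger/r_0) = M$. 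For $M$ sufficiently small relative to $\log(r_1/r_0)$ one has $r_\dagger < r^*$, the positivity sets are disjoint, and $u = v^+_* - v^-_*$ is a minimizer. The middle annular region $\mathcal{O}_0 = \{r_\dagger < |x| < r^*\}$ is then a compactly contained connected component of $\{u = 0\}$ whose boundary consists of two concentric circles, one in $\Gamma^+$ and one in $\Gamma^-$.

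This construction already exhibits a ``pool of zeroes'' in the sense of the theorem, but the pool is topologically an annulus, $\Gamma^+$ and $\Gamma^-$ are disjoint, and no branch points are produced. To additionally ensure $\Gamma_{\mathrm{BP}}(u)\cap\Omega \neq \emptyset$, the construction must be deformed so that the pool becomes \emph{simply connected}: then $\partial\mathcal{O}$ is a single simple closed curve with nonempty closed arcs in $\Gamma^+$ and in $\Gamma^-$, and by connectedness of $\partial\mathcal{O}$ the transitions between these arcs must lie in $\Gamma^+\cap\Gamma^-$ and are limits of one-phase points along the adjacent arcs, hence branch points. A natural way to attempt this is to break the rotational symmetry --- either by perturbing $\varphi$ (for instance prescribing a small positive arc on the inner circle so that a second positive component forms, ``closing up'' the annular pool at one angular location) or by modifying $\Omega$ to a non-annular simply connected domain in which the positive and negative regions pinch off the pool from any boundary zero set.

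The main obstacles are twofold. First, one must rigorously justify that the radial candidates $v^\pm_*$ are actual one-phase minimizers and not merely optimal among radial competitors; this uses standard Alt--Caffarelli theory together with either a uniqueness statement or a symmetrization argument in the radial setting. The second, and more delicate, difficulty is the symmetry-breaking step: one must show that after the perturbation there is still a connected component of $\{u=0\}$ compactly contained in $\Omega$, and that it has acquired a simply connected topology with both $\Gamma^\pm$ on its boundary. The topology of $\{u=0\}$ is not stable under arbitrary perturbation, so controlling it will presumably rely on the $C^{1,1/2}$ regularity at two-phase points provided by Theorem~\ref{t:DSV}, together with a continuity or degree-theoretic argument tracking the pool through the deformation.
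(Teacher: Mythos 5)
Your decoupling observation is correct: since $|\nabla w|^2 = |\nabla w^+|^2 + |\nabla w^-|^2$ a.e.\ and the volume terms split, $J_\Omega(w) = J_\Omega^+(w^+) + J_\Omega^-(w^-)$, so a pair of one-phase minimizers with disjoint positivity sets assembles into a two-phase minimizer. Modulo the (nontrivial, but plausibly fixable) verification that the radial candidates on the annulus are genuine one-phase minimizers rather than just critical points, this gives a compactly contained annular pool with $\partial\mathcal O$ meeting both $\Gamma^+$ and $\Gamma^-$. This is a genuinely different and much shorter route to the ``pool'' clause than the paper's (the paper concedes in Section 1.1 that producing a zero set with interior is easy), and your final topological argument --- a connected $\partial\mathcal O$ containing relatively open arcs of one-phase points of both signs must contain branch points --- is essentially the same Jordan-curve argument the paper uses at the end of the proof of Theorem \ref{t:revisedmain}.

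The genuine gap is the symmetry-breaking step, which is where the entire difficulty of the theorem lives and which you leave as a plan rather than a proof. The tools you propose for it do not obviously work: minimizers of $J_\Omega$ are not known to be unique and do not depend continuously on the boundary data, so there is no ``branch of minimizers'' along which a degree-theoretic or continuity argument could track the pool through a deformation; the topology of $\{u=0\}$ can jump. Moreover, once the perturbation destroys the decoupling (the positivity sets are no longer forced apart by symmetry), you lose the only mechanism your construction had for controlling $\{u=0\}$ at all. The paper's solution to exactly this problem is quantitative rather than perturbative: on a long rectangle it prescribes data $\pm f_N$ with $f_N = 1-\alpha < 1$ in the middle (forcing, via an explicit one-dimensional slice-energy comparison and the bound $\iint |\partial_x v_N|^2 \le 16/N$, a pool of definite thickness there --- Lemma \ref{l:doesntdiplow}) and $f_N = 2$ near the ends (forcing $u_N$ to be $L^2$-close to the linear profile $2y$ there, whence a harmonic-measure/$A_\infty$ argument, Lemma \ref{l:noonephase}, shows every free boundary point in the outer region is two-phase, so the pool cannot leak out). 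You would need an argument of comparable strength --- not a continuity heuristic --- to close the pool off and still certify that the resulting $u$ is a minimizer with the claimed zero set.
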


 Our tools are reminiscent of our previous studies on almost-minimizers with free boundary, c.f. \cite{DaTo15, DaEnTo19, DaEnSmTo21}. In particular, we carefully choose competitor functions and use ideas from harmonic analysis and geometric measure theory. We further remark that our construction can be extended to produce examples in dimensions $n\geq 3$, see Remark \ref{r:higherdim}.

\subsection{Comparison with other work on Branch Points}\label{ss:branch}
While we believe the question of whether branch points (or pools) in the free boundary of minimizers of \eqref{e:ACF} exist has been open until now, there has been substantial work on branch points for other related functionals and for ``critical points" of the functional \eqref{e:ACF}. 

In particular, branch points in the free boundaries of minimizers to a related vectorial problem were constructed by Spolaor and Velichkov in \cite{SpVe19}. Additionally, a related phenomena, when the free boundary of minimizers to a one-phase version of \eqref{e:ACF} comes into contact with the fixed boundary (i.e. $\partial \Omega$) resulting in branching like behavior, is well studied (e.g. \cite{ChSa19, DeSpVePreprint}). 

The only other work we are aware of regarding branch points in the free boundary of functions associated to the energy \eqref{e:ACF}, is the recent preprint \cite{DeSpVePreprint}. In this very nice work, the authors (amongst other things) construct an infinite family of {\it critical points} to the functional \eqref{e:ACF} when $n = 2$ using (quasi-)conformal mappings (see \cite[Theorem 1.8]{DeSpVePreprint}). Without being precise, we recall that critical points to \eqref{e:ACF} satisfy the associated Euler-Lagrange equations but do not necessarily (locally) minimize the functional in any domain (e.g. $u(x) = |x|$ is a critical point of $J_{\Omega}$ with $\lambda_+ = 1$ but not a (local) minimizer). 

\begin{remark}\label{r:compare}
To be explicit, we note that (none of) the results of \cite{DeSpVePreprint} either imply or are implied by our results here. In particular, our main theorem does not analyze the rate at which $\Gamma^+$ and $\Gamma^-$ come together at the cusp points, and thus does not produce examples with different rates. On the other hand, it is not clear whether the examples produced in \cite{DeSpVePreprint} are minimizers. 

Furthermore, the methods of proof are very different, in so far as \cite{DeSpVePreprint} draws an interesting connection with minimizers of a non-linear obstacle type problem and uses (quasi-)conformal maps in their construction. We construct the relevant boundary values and domains explicitly but do not have a closed formula for our minimizer. Rather, we use tools from harmonic analysis and geometric measure theory to constrain the behavior of the minimizer. In particular, our methods extend to producing examples in dimension $n > 2$; see Remark \ref{r:higherdim}, which presumably are out of reach of (quasi-)conformal methods.
\end{remark}

We are not aware of any prior work on ``pools" in the zero set of minimizers to \eqref{e:ACF}. We will limit ourselves to pointing out that it is easy to construct examples of minimizers whose zero set has non-empty interior but that some care is required to constrain this open component to the strict interior of the domain.

\subsection{Accumulation of Branch Points}\label{ss:Branch}

Also of interest in \cite{DeSpVePreprint} is the fact that for certain, symmetric (in a precise sense), critical points of \eqref{e:ACF} in two dimensions, the branch points in the free boundary are locally isolated in (c.f. \cite[Theorem 1.6(a)]{DeSpVePreprint}). In fact, in analogy with area-minimizing surfaces (see, e.g. \cite{Ch88, DeSpSp17, DeSpSp18, DeSpSp20, DeMaSpVa18}) one might conjecture the following:

\begin{conjecture}\label{conj:finite}
Let $u$ be a minimizer to \eqref{e:ACF} in some $\Omega \subset \mathbb R^n$. Then, for any $D\subset \subset \Omega$ the set $D\cap \Gamma_{\mathrm{BP}}(u)$ is locally contained in finitely many Lipschitz $(n-2)$-dimensional submanifolds. 
\end{conjecture}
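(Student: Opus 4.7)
The plan is to reduce Conjecture \ref{conj:finite} to a statement about the zero set of a single regular function, and then apply a structural theorem for such zero sets. By Theorem \ref{t:DSV}, for every $x_0 \in \Gamma_{\mathrm{BP}}(u) \cap D$ there is a radius $r_0 > 0$ and a common tangent hyperplane $T$ so that, in coordinates $(x', x_n)$ with $T = \{x_n = 0\}$, $\Gamma^\pm \cap B(x_0, r_0) = \{x_n = f^\pm(x')\}$ for $C^{1,1/2}$ functions $f^\pm$ and (after orienting so $\{u>0\}$ lies above $\Gamma^+$ and $\{u<0\}$ below $\Gamma^-$) we have $f^+ \geq f^-$. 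The branch points in $B(x_0,r_0)$ are then precisely the graph over the zero set $\{g = 0\}$ of the nonnegative $C^{1,1/2}$ function $g := f^+ - f^-$, and since one-phase points accumulate at $x_0$, $g \not\equiv 0$ near $0$. The conjecture thus reduces to showing that $g^{-1}(0)$ is locally contained in finitely many Lipschitz $(n-2)$-submanifolds of $T$.

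To establish this, I would try to bootstrap the regularity of $f^\pm$ (and hence of $g$) above $C^{1,1/2}$. On the open set $\{g > 0\}$ each of $\Gamma^\pm$ is a one-phase free boundary, hence real analytic by \cite{AlCa81}. At any two-phase point where the free boundaries meet transversally across an $(n-2)$-dimensional interface (so that $\{f^+ = f^-\}$ is locally a smooth codimension-$1$ submanifold of $T$), $u$ solves a genuine transmission problem $\Delta u^\pm = 0$ with $|\nabla u^+|^2 = |\nabla u^-|^2$ on the interface, and a partial hodograph transform in the spirit of Kinderlehrer-Nirenberg-Spruck should promote $f^\pm$ to real analytic at such points. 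The essentially new regime is the flat branching locus, where $g$ and its first derivatives vanish simultaneously; there the key step would be to produce a unique blow-up of the form $\alpha(x_n^+ - x_n^-)$ (two identical half-planes meeting tangentially) with a power-rate decay, for instance via a branch-point epiperimetric inequality in the spirit of \cite{SpVe19, DeSpVe21}.

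If $g$ can be shown to be real analytic, the {\L}ojasiewicz structure theorem for analytic varieties gives that $g^{-1}(0)$ is a locally finite union of real analytic submanifolds; since $g \geq 0$ and $g \not\equiv 0$, its zero set has empty interior in $T$ and dimension at most $n - 2$, and a standard stratification argument then covers $g^{-1}(0) \cap K$, for any compact $K$, by finitely many analytic, hence Lipschitz, $(n-2)$-graphs. A backup route, should analytic regularity prove out of reach, would be to combine the unique-tangent-profile estimate with a Naber-Valtorta-type quantitative stratification scheme to obtain $(n-2)$-rectifiability of $\Gamma_{\mathrm{BP}}(u)$, and then separately promote rectifiability to Lipschitz-graph structure via a Reifenberg-flat argument using quantitative control on $\nabla g$ along $\{g=0\}$.

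The main obstacle is the regularity of $f^\pm$ \emph{at} branch points. The almost-minimizer example constructed in this paper shows that nothing beyond $C^{1,1/2}$ can hold without exploiting minimality, so any proof must make essential use of the variational inequality itself and not just the Euler-Lagrange equations, which rules out importing the quasiconformal constructions of \cite{DeSpVePreprint}. I expect the hardest subcase to be accumulation of branch points whose tangent profile is completely flat (both $f^+$ and $f^-$ tangent to $T$ to high order along $\Gamma_{\mathrm{BP}}$): there, even the existence of a unique blow-up appears to require a new idea beyond the currently available monotonicity formulas for \eqref{e:ACF}.
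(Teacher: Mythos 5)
The statement you are addressing is labeled a \emph{Conjecture} in the paper and is not proved there; on the contrary, the surrounding discussion (and Theorem \ref{t:ambranchpoints}) is devoted to showing that the analogous statement \emph{fails} for almost-minimizers, so any proof must exploit minimality beyond the $C^{1,1/2}$ regularity of Theorem \ref{t:DSV}. Your proposal is therefore correctly framed as a program rather than a proof, and the reduction in your first paragraph is sound: by Theorem \ref{t:DSV} the branch set is locally the graph over the zero set of the nonnegative $C^{1,1/2}$ function $g = f^+ - f^-$ on the common tangent plane $T$, and if $g$ were real analytic the {\L}ojasiewicz stratification would indeed finish the argument.

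The remaining steps, however, are genuine gaps rather than technical details. First, bootstrapping $f^\pm$ beyond $C^{1,1/2}$ \emph{at} branch points is exactly the open problem: the hodograph/transmission argument you invoke presupposes that the two-phase interface $\{f^+ = f^-\}$ is already a smooth codimension-one submanifold of $T$, which is essentially the conclusion you are trying to reach, so the argument is circular precisely at the points that matter (the boundary of $\{g=0\}$ inside $T$, i.e. the branch set itself). Second, no epiperimetric inequality or unique-blow-up statement at branch points of \eqref{e:ACF} is available in the cited literature --- \cite{SpVe19, DeSpVe21} yield $C^{1,1/2}$ graphs but not uniqueness or rate of convergence of the second-order expansion --- and your own text concedes that this ``appears to require a new idea.'' Third, the Naber--Valtorta fallback would at best give $(n-2)$-rectifiability, i.e. a covering by \emph{countably} many Lipschitz images, which is strictly weaker than the conjectured covering by \emph{finitely} many Lipschitz $(n-2)$-submanifolds; promoting the former to the latter again requires uniform quantitative decay at every branch point. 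In short, the proposal correctly identifies the obstacles but overcomes none of them; the statement remains a conjecture, as the paper intends.
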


In the second part of this paper we show that such a theorem fails for {\it almost-minimizers} of \eqref{e:ACF}. Recall that almost-minimizers minimize the energy \eqref{e:ACF} up to some noise:

\begin{definition}\label{d:am}
We say that $u$ is an almost-minimizer to \eqref{e:ACF} in $\Omega\subset \mathbb R^n$ if there is a $C > 0$ and an $\alpha \in (0, 1]$ such that for every ball $B$ with $\overline{B} \subset \Omega$ and for every $v\in W^{1,2}(\Omega)$ with $v = u$ in $\Omega \setminus \overline{B}$ we have $$J_B(u) \leq J_B(v) + Cr(B)^{n+\alpha}.$$
\end{definition} 

Almost-minimizers arise naturally in constrained optimization (and thus in eigenvalue-optimization problems see, e.g. \cite{MaTeVe17}). Almost-minimizers may not satisfy an Euler-Lagrange equation, but the work of the authors in \cite{DaTo15, DaEnTo19, DaEnSmTo21} show that the ``first order" regularity of almost-minimizers mimics that of minimizers; in particular, almost-minimizers are regular, Lipschitz continuous, non-degenerate, $C^{1,\beta}$ up to their free boundary, and, in the one-phase case, have free boundaries which are smooth up to a set of $\mathcal H^{n-1}$-measure zero. 

However in contrast to \cite[Theorem 1.6(a)]{DeSpVePreprint} and Conjecture \ref{conj:finite}, in this paper we prove that the set of branch points for almost-minimizers can be essentially arbitrary:

\begin{theorem}(Corollary to Theorem \ref{t:fbam})\label{t:ambranchpoints}
Let $E \subset \mathbb R^{n-1}$ be a compact set with no interior point.
Embed $E$ into $\mathbb R^n$ so that 
$E \subset \{(x', 0)\mid x' \in \mathbb R^{n-1}\}$ and let $R > 0$ be 
so large
that $E \subset B(0, R/10)$. Then there exists an almost-minimizer, $u$, to $J_{B(0, R)}$ with $\lambda_+ = \lambda_- = 1$ such that $\Gamma_{\mathrm{BP}}(u) = E$. Furthermore, we can take $u$ to be such that $\Gamma^+$ is the reflection of $\Gamma^-$ across the hyper-plane $\{x_n =0\}$ (so that this solution is ``symmetric" in the sense of \cite{DeSpVePreprint}). 
\end{theorem}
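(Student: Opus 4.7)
The plan is to explicitly prescribe a Lipschitz free-boundary geometry by means of a truncated distance function to $E$, and then invoke Theorem~\ref{t:fbam} to realize this geometry as the free boundary of an almost-minimizer. Identifying $E$ with its projection onto $\mathbb{R}^{n-1}$, set
\[
\rho(x') := \min\bigl(\mathrm{dist}(x', E),\, R/5\bigr),
\]
a $1$-Lipschitz function on $\mathbb{R}^{n-1}$ with $\rho^{-1}(0) = E$. Define the symmetric pool and the two phases
\[
\mathcal Z := \{(x', x_n) \in B(0, R) : |x_n| \le \rho(x')\}, \qquad \Omega^{\pm} := \{(x', x_n) \in B(0, R) : \pm x_n > \rho(x')\}.
\]
The set $\mathcal Z$ is Lipschitz, invariant under $(x', x_n) \mapsto (x', -x_n)$, and meets the hyperplane $\{x_n = 0\}$ precisely along $E$.

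I then invoke Theorem~\ref{t:fbam} to produce an almost-minimizer $u$ of $J_{B(0, R)}$ (with $\lambda_\pm = 1$) whose positive (resp.\ negative) phase is $\Omega^+$ (resp.\ $\Omega^-$) and whose zero set is $\mathcal Z$. The symmetry $u(x', -x_n) = -u(x', x_n)$ is enforced by prescribing odd boundary data on $\partial B(0, R)$ (positive on the upper hemisphere, negative on the lower) and taking $u$ to be the odd extension of a function built in $\Omega^+$. The slack $Cr(B)^{n+\alpha}$ in the almost-minimizer inequality is precisely what tolerates the low (merely Lipschitz) regularity of $\rho$, a regularity that would be incompatible with the analytic graphs forced on true minimizers by Theorem~\ref{t:DSV}.

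It remains to identify the branch points of $u$. By construction,
\[
\Gamma^{\pm}(u) = \{x_n = \pm \rho(x')\} \cap B(0, R), \qquad \Gamma_{\mathrm{TP}}(u) = \Gamma^+ \cap \Gamma^- = \{(x', 0) : \rho(x') = 0\} = E.
\]
Since $E$ has empty interior in $\mathbb{R}^{n-1}$, for every $p' \in E$ there is a sequence $p_k' \to p'$ with $p_k' \notin E$, so $\rho(p_k') > 0$ and the one-phase points $(p_k', \rho(p_k')) \in \Gamma^+ \setminus \Gamma^- \subset \Gamma_{\mathrm{OP}}(u)$ converge to $(p', 0) \in E$. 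Hence $E \subset \overline{\Gamma_{\mathrm{OP}}(u)}$, which combined with $\Gamma_{\mathrm{BP}}(u) \subset \Gamma_{\mathrm{TP}}(u) = E$ gives $\Gamma_{\mathrm{BP}}(u) = E$. The principal obstacle is the middle step: constructing an almost-minimizer with the prescribed Lipschitz zero set $\mathcal Z$. This is exactly the content of Theorem~\ref{t:fbam} and is the technical heart of the second part of the paper; once it is available, Theorem~\ref{t:ambranchpoints} follows by the essentially combinatorial argument above.
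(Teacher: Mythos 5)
Your reduction of the theorem to Theorem \ref{t:fbam} is the right idea, and your final step (using that $E$ has empty interior to show $E\subset\overline{\Gamma_{\mathrm{OP}}(u)}$, hence $\Gamma_{\mathrm{BP}}(u)=\Gamma_{\mathrm{TP}}(u)=E$) is essentially the argument in the paper. But the middle step has a genuine gap: you feed Theorem \ref{t:fbam} the function $\rho(x')=\min(\mathrm{dist}(x',E),R/5)$, which is only Lipschitz, while Theorem \ref{t:fbam} requires $f^\pm\in C^2$ (and also $f^\pm\equiv 0$ outside $B(0,R/10)$, which your truncation at height $R/5$ violates). This is not a removable technicality, and your claim that ``the slack $Cr(B)^{n+\alpha}$ tolerates the merely Lipschitz regularity of $\rho$'' is false on two counts. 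First, the proof of Theorem \ref{t:fbam} goes through the regularized distance $D_{\mu,\beta}$ and Lemma \ref{l:fbam}, both of which use the $C^2$ character of the graphs in an essential way (the expansion \eqref{e:blowuptp}, the H\"older continuity of $|\nabla u^\pm|$ up to $\Gamma^\pm$ in Lemma \ref{l:regdistproperties}, the Weiss-monotonicity comparison). Second, and more fundamentally, no almost-minimizer with gauge $Cr^{n+\alpha}$ can have your prescribed free boundary: by \cite{DaTo15, DaEnTo19, DaEnSmTo21} the free boundary of an almost-minimizer is $C^{1,\beta}$ near flat one-phase points, whereas the graph of $\mathrm{dist}(\cdot,E)$ has genuine corners along the medial axis of $E$ (e.g., at the midpoint of every complementary gap of $E$ when $n=2$). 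So the geometry you prescribe is simply not realizable, no matter how the function $u$ is chosen.

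The fix is exactly what the paper does: invoke the Whitney--Stein extension theorem (\cite[VI, Theorem 2]{Steinbook}) to produce a \emph{$C^2$} function $f\geq 0$ whose zero set is exactly the prescribed closed set (take $E$ together with the complement of $B(0,R/10)$ so as to satisfy the support hypothesis of Theorem \ref{t:fbam}), and set $f^+=f$, $f^-=-f$. The reflection symmetry of $\Gamma^\pm$ is then automatic from $f^-=-f^+$ and the symmetric choice of weights, rather than from any prescription of odd boundary data (which in any case does not determine an almost-minimizer). With that substitution your concluding identification of the branch points goes through as written.
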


For those familiar with almost-area minimizers, this theorem may seem trivial; indeed any graph of a $C^{1,\alpha}$ function is almost-area minimizing.
However, it is not the case that if $\partial \{u \neq 0\}$ is locally given by a (union of) smooth graphs and $u$ is smooth, except for jumps along $\partial \{u \neq 0\}$, then $u$ is an almost-minimizer to the energy in \eqref{e:ACF}. Indeed, almost-minimizers must satisfy additional non-degeneracy and boundedness conditions in addition to a condition on their normal derivative at $\partial \{u \neq 0\}$ (c.f. Lemma \ref{l:fbam} below). 

The essence of Theorem \ref{t:ambranchpoints} is that we are able to construct almost-minimizers to \eqref{e:ACF} whose free boundaries are given by the graphs of any two smooth functions $f^- \leq f^+$ over $\mathbb R^{n-1} \subset \mathbb R^n$. These almost-minimizers are given by a regularized distance (see \eqref{e:regdist} below) developed by the first author, with Feneuil and Mayboroda, to characterize the geometry of sets of high co-dimension using degenerate PDE (see, e.g. \cite{DaFeMa19}). This connection is surprising, but essentially is due to the fact that regularized distances satisfy the same growth conditions as almost-minimizers and, due to the results of \cite{DaEnMa21}, one can prescribe their normal derivatives along the set on which they vanish.

\section{Slice Minimizers and Uniform Lipschitz Continuity}\label{ss:prelim}

Let $I_N:=[-3N,3N]$, with $N$ very large, and consider $\overline{R}_N:=I_N\times [-1, 1]\subset \R^2$. To define our boundary conditions on  $I_N\times \{1\}$ and $I_N\times \{-1\}$,  we fix some $\alpha \in (0,1)$ small but universal ($\alpha = 1/10$ will do) and define
\begin{align*}
f_N(x)=\begin{cases} 1-\alpha, &\text{ if } |x|\le N\\
\frac{|x|(1+\alpha)}{N} -2\alpha &\text{ if } N\le |x|\le 2N\\
2 
&\text{ if } 2N\le|x|\le 3N 
\end{cases}
\end{align*}
(see Figure \ref{f:fndef}).

We then let \begin{equation}\label{e:undef}u_N = \mathrm{argmin}\{ J(u, \overline{R}_N)\mid 
u_N(x,\pm 1)=\pm f_N(x), x\in I_N\},\end{equation} that is, a minimizer to the Alt-Caffarelli-Friedman function with boundary values $\pm f_N(x)$. We should note that we are abusing the argmin notation slightly, as this minimizer is not necessarily unique, but we can pick any minimizer for the analysis below. We should also note that we are not prescribing Dirichlet data on the ``vertical" parts of the boundary. However, an existence and regularity theory for minimizers given ``partial Dirichlet data" exists (see e.g. \cite{AlCaFr84}) or we could prescribe data on $\{\pm 3N\} \times [-1,1]$ that simply linearly interpolates between $f_N(\pm 3N)$ and $-f_N(\pm 3N)$.

When the precise value of $N$ is not important we will suppress it from the notation. 

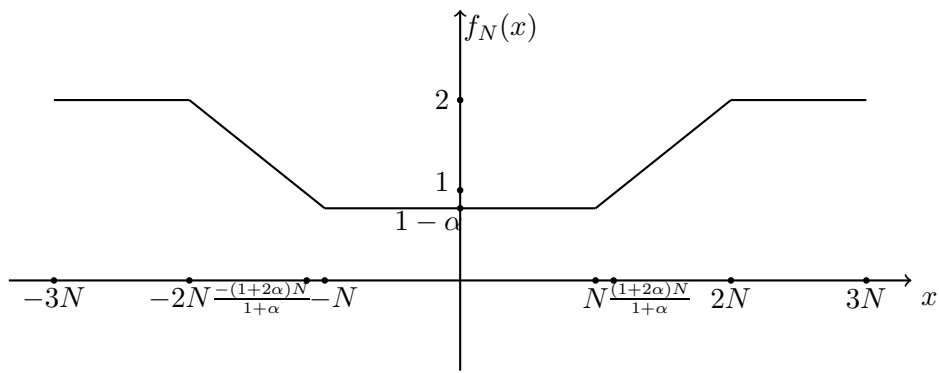
\begin{figure}
\begin{tikzpicture}[scale=1.2]
\draw [thick, smooth, ->] (-5,0) -- (5,0); 
\draw [thick, smooth, ->] (0,-1) -- (0,3);
\node at (1.5,-0.2) {\small $N$};
\node at (2.1, -0.2) {\tiny $\frac{(1+2\alpha)N}{1+\alpha}$};
\filldraw (1.70,0) circle (0.03cm);
\node at (-2.2, -0.2) {\tiny $\frac{-(1+2\alpha)N}{1+\alpha}$};
\filldraw (-1.70,0) circle (0.03cm);
\filldraw (1.5,0) circle (0.03cm);
\node at (-1.4,-0.2) {\small $-N$};
\filldraw (-1.5,0) circle (0.03cm);
\node at (3,-0.2) {\small $2N$};
\filldraw (3,0) circle (0.03cm);
\node at (-3.1,-0.2) {\small $-2N$};
\filldraw (-3,0) circle (0.03cm);
\node at (4.5,-0.2) {\small $3N$};
\filldraw (4.5,0) circle (0.03cm);
\node at (-4.5,-0.2) {\small $-3N$};
\filldraw (-4.5,0) circle (0.03cm);
\node at (5.2, -0.2) {\small $x$};
\node at (0.45, 2.8) {\small $f_N(x)$};
\draw [thick, smooth] (-1.5,0.8) -- (1.5,0.8);
\draw [thick, smooth] (1.5,0.8) -- (3,2);
\draw [thick, smooth] (3,2) -- (4.5,2);
\draw [thick, smooth] (-1.5,0.8) -- (-3,2);
\draw [thick, smooth] (-3,2) -- (-4.5,2);
\node at (-0.35, 0.65) {\small $1-\alpha$};
\filldraw (0,0.8) circle (0.03cm);
\node at (-0.2, 1.1) {\small $1$};
\filldraw (0,1) circle (0.03cm);
\node at (-0.2, 2) {\small $2$};
\filldraw (0,2) circle (0.03cm);
\end{tikzpicture}
\caption{Graph of $f_N$}
\label{f:fndef}
\end{figure}

In order to study $u_N$ we will introduce the ``slice minimizer", $v_N$, defined as follows: for each $x\in I$, $v_N(x, -)$ is the unique\footnote{Our computations in the Appendix show that this minimizer is unique} minimizer of the one-dimensional functional 
\begin{equation}\label{slice energy}
H_x(w):=\int_{-1}^1 (w'(y))^2dy + |\{w\neq 0\}\cap[-1,1]|
\end{equation}
under the constraint $v_N(x,\pm 1)=\pm f_N(x)$.

Notice first that for $x\in I$, the set $\{v_N(x, -) =0\}$ is an interval. Indeed, if $y_1$ and $y_2$ are the first/last points where $v_N$ vanishes, replacing $v$ by $0$ on the interval $[y_1,y_2]$ yields an admissible candidate $w$ with $\int_{-1}^1(w'(y))^2dy \le \int_{-1}^1(v')^2dy$ and $|\{w\neq 0\}|\le |\{v\neq 0\}|$, with strict inequality, unless $v_N\equiv 0$ on $[y_1,y_2]$. Moreover, $v_N$ is harmonic in the open set $\{v_N\neq 0\}\cap(-1,1)$, that is, $v_N$ is locally affine.
 A straightforward calculation, which we defer to the Appendix (c.f. Sections \ref{SS>1} and \ref{SS<1}), allows us to explicitly calculate the slice minimizer and its energy for each $x\in [-3N, 3N]$.

\begin{lemma}\label{slicedmin}
Let $v(y)=v_N(x,y)$ be the minimizer of $H_x$ 

with $v(x,\pm 1)=\pm f_N(x)$. 
\begin{enumerate}
    \item[Case 1:] When $f_N(x)\ge 1$, $v(y)=yf_N(x)$, for $y\in [-1,1].$ 
    
    Notice that in this case, $H_x(v)=2f_N(x)^2+2.$
    \item[Case 2:] When $f_N(x)<1$, $v(y)=\text{sgn}(y)(|y|-1+f_N(x))_+.$ 
    
    In this case, $H_x(v)=4f_N(x).$
\end{enumerate}
\end{lemma}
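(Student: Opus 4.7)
The plan is to parametrize all candidate minimizers by the endpoints of their zero set and reduce the lemma to a two-variable convex optimization. Set $a := f_N(x) > 0$. The text has already established that the zero set $Z_v$ of any minimizer $v$ of $H_x$ is a closed interval $[y_1, y_2] \subset [-1, 1]$, and that $v$ is harmonic (hence affine) on each component of $[-1, 1] \setminus Z_v$. Because $v(\pm 1) = \pm a \neq 0$, continuity forces $-1 < y_1 \leq y_2 < 1$, and the affine pieces are then uniquely determined by the endpoint values. Setting $s := y_1 + 1 > 0$ and $t := 1 - y_2 > 0$ (with $s + t \leq 2$), the function $v$ is affine with slope $a/s$ on $[-1, y_1]$ and slope $a/t$ on $[y_2, 1]$, and a direct computation gives
\[
H_x(v) = F(s, t) := \frac{a^2}{s} + \frac{a^2}{t} + s + t,
\]
where the first two terms are the Dirichlet energy and the last two record $|\{v \neq 0\}| = s + t$.

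Next I would minimize $F$ on the region $\{(s, t) : s, t > 0,\ s + t \leq 2\}$. The function $F$ is strictly convex on $(0, \infty)^2$ (its Hessian is diagonal with positive entries), and its unique unconstrained critical point is $s = t = a$, where $F(a, a) = 4a$. If $a \leq 1$, this point lies in the feasible region, so the constrained minimum is $4a$, attained at $y_1 = a - 1$, $y_2 = 1 - a$, i.e., by $v(y) = \text{sgn}(y)(|y| - 1 + a)_+$ --- this is Case 2. If $a \geq 1$, the unconstrained minimizer is infeasible, and strict convexity pushes the constrained minimum onto the edge $s + t = 2$; minimizing $s \mapsto F(s, 2 - s)$ on $(0, 2)$ yields $s = t = 1$, hence $y_1 = y_2 = 0$ and $v(y) = a y$ with $F(1, 1) = 2a^2 + 2$ --- this is Case 1. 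Uniqueness in both cases follows from strict convexity of $F$ (respectively, of its restriction to the edge), and the two regimes agree at $a = 1$ since $\text{sgn}(y)(|y|)_+ = y$.

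The main obstacle, such as it is, lies in the reduction rather than in the optimization: one must verify that the text's observations genuinely force the parametrization by $(y_1, y_2)$ to be exhaustive and that the boundary values $\pm a \neq 0$ rule out the degenerate limits $s \to 0$ or $t \to 0$ (which are in any case forbidden because $F \to \infty$ there). After that, the minimization is routine one-variable calculus, and the only real care required is to track which regime is active as $a$ crosses $1$.
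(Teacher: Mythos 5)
Your proof is correct and follows essentially the same route as the paper's Appendix: both reduce the problem to minimizing the same two-variable function of the zero-set endpoints (your $F(s,t)$ is exactly the paper's $G(a,b)$ after the substitution $s=1+a$, $t=1-b$). The only difference is cosmetic --- you settle the optimization by strict convexity and a boundary analysis, whereas the paper verifies the two minimizing configurations by direct algebraic inequalities --- and your convexity argument also cleanly delivers the uniqueness claimed in the paper's footnote.
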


 \begin{center}
\begin{tikzpicture}[scale=1.6]
\draw [thick, smooth, ->] (-1.6,0) -- (1.6,0); 
\draw [thick, smooth, ->] (0,-1.7) -- (0,1.7);
\node at (-1.1,-0.2) {\small $-1$};
\node at (1.1, -0.2) {\small $1$};
\filldraw (1,0) circle (0.03cm);
\filldraw (-1,0) circle (0.03cm);
\node at (-0.2,1) {\small $1$};
\node at (-0.2,-1) {\small $-1$};
\filldraw (0,1) circle (0.03cm);
\filldraw (0,-1) circle (0.03cm);
\node at (-0.3,1.22) {\small $f_N(x)$};
\node at (-0.4,-1.22) {\small $-f_N(x)$};
\filldraw (0,1.2) circle (0.03cm);
\filldraw (0,-1.2) circle (0.03cm);
\node at (1.5, -0.2) {\small $y$};
\node at (0.32, 1.6) {\small $v_N(y)$};
\draw [thick, smooth] (-1,-1.2) -- (1,1.2);
\end{tikzpicture}
\hspace{10mm}
\begin{tikzpicture}[scale=1.6]
\draw [thick, smooth, ->] (-1.6,0) -- (1.6,0); 
\draw [thick, smooth, ->] (0,-1.7) -- (0,1.7);
\node at (-1.1,-0.2) {\small $-1$};
\node at (1.1, -0.2) {\small $1$};
\filldraw (1,0) circle (0.03cm);
\filldraw (-1,0) circle (0.03cm);
\node at (-0.2,1) {\small $1$};
\node at (-0.2,-1) {\small $-1$};
\filldraw (0,1) circle (0.03cm);
\filldraw (0,-1) circle (0.03cm);
\node at (-0.3,0.8) {\small $f_N(x)$};
\node at (-0.39,-0.8) {\small $-f_N(x)$};
\filldraw (0,0.9) circle (0.03cm);
\filldraw (0,-0.9) circle (0.03cm);
\node at (1.5, -0.2) {\small $y$};
\node at (0.32, 1.6) {\small $v_N(y)$};
\draw [thick, smooth] (-1,-0.9) -- (-0.2,0);
\draw [thick, smooth] (-0.2,0) -- (0.2,0);
\draw [thick, smooth] (0.2,0) -- (1,0.9);
\end{tikzpicture}
\end{center}

A crucial observation is that even though we built $v_N$ ``slice by slice", its $x$-derivative still has small $L^2$-norm.

\begin{lemma}\label{l:dvdxnotbig} We have
\begin{equation}\label{dvdx}
\iint_{\overline{R}_N}\left|\frac{\partial v_N}{\partial x}\right|^2\le \frac{16}{N}.
\end{equation}
\end{lemma}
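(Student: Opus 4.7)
The plan is to differentiate the explicit formulas for $v_N$ from Lemma \ref{slicedmin} in the $x$-variable and integrate slice by slice. First I would observe that $\partial_x v_N \equiv 0$ on the regions where $f_N$ is constant, namely $|x|\le N$ and $2N\le|x|\le 3N$, simply because on these strips $v_N$ depends only on $y$. Hence the entire integral reduces to the two transition strips $N\le|x|\le 2N$, on which $f_N'(x)=\pm(1+\alpha)/N$. By the left/right symmetry, it suffices to bound the contribution from $x\in[N,2N]$ and double it.

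Next I would split $[N,2N]$ according to whether we are in Case 1 or Case 2 of Lemma \ref{slicedmin}, using the dividing value $x_\ast:=(1+2\alpha)N/(1+\alpha)$ at which $f_N(x_\ast)=1$. For $x\in[x_\ast,2N]$ we are in Case 1, and $v_N(x,y)=yf_N(x)$ gives
\[
\left|\tfrac{\partial v_N}{\partial x}(x,y)\right|^2 = y^2(f_N'(x))^2,
\]
so integrating in $y$ over $[-1,1]$ yields $\tfrac{2}{3}(f_N'(x))^2$. For $x\in[N,x_\ast]$ we are in Case 2, and the formula $v_N(x,y)=\mathrm{sgn}(y)(|y|-1+f_N(x))_+$ gives $\partial_xv_N=\mathrm{sgn}(y)f_N'(x)$ on the set $\{|y|>1-f_N(x)\}$ and $0$ otherwise; integrating in $y$ yields $2f_N(x)(f_N'(x))^2\le 2(f_N'(x))^2$ since $f_N\le 1$ there.

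Plugging $|f_N'|=(1+\alpha)/N$ into both contributions and multiplying by $2$ for symmetry, the whole integral is bounded by
\[
4\int_N^{x_\ast}2\,\frac{(1+\alpha)^2}{N^2}\,dx+4\int_{x_\ast}^{2N}\tfrac{2}{3}\,\frac{(1+\alpha)^2}{N^2}\,dx
\le \frac{C(1+\alpha)^2}{N},
\]
which for $\alpha=1/10$ is comfortably below $16/N$. Since none of the steps involves any real obstacle beyond keeping track of the two cases in Lemma \ref{slicedmin} and the transition value $x_\ast$, the only thing worth checking carefully is that $v_N$ is genuinely continuous across $\{f_N=1\}$ (both formulas agree there, giving $v_N(y)=y$), so that the above pointwise differentiation is justified almost everywhere on the transition strip.
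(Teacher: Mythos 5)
Your proposal is correct and follows essentially the same route as the paper: both arguments reduce the integral to the two transition strips $N\le|x|\le 2N$ (since $v_N$ is independent of $x$ where $f_N$ is constant) and then use $|f_N'|=(1+\alpha)/N$ there; the paper simply bounds $|\partial_x v_N|\le |f_N'(x)|\le 2/N$ pointwise and multiplies by the area of the strips, so your more refined $y$-integration (the factors $\tfrac23$ and $2f_N$) is not needed, though it is correct. The only slip is cosmetic: you announce a factor of $2$ for the left/right symmetry but write a factor of $4$ in the display, which only weakens the bound and still lands comfortably below $16/N$.
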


\begin{proof}
Notice that $v_N(x,y)$ is not harmonic in $\{v\neq 0\}$, hence $v_N$ is not a minimizer of $J(-, R_N)$. However, $\frac{\partial v_N}{\partial x}$ exists a.e. in $(-3N,3N)\times (-1,1)$ and, where it exists, $\left|\frac{\partial v_N}{\partial x}\right|\le |f_N'(x)|\le\frac{1+\alpha}{N}\le\frac{2}{N}.$ 

Since $f(x)\equiv 1-\alpha$ for $|x|<N$ and $f(x)\equiv 2$ for $2N<|x|<3N$, $\left|\frac{\partial v_N}{\partial x}\right|=0$ on $(-N,N)\times (-1,1)$, on $(2N,3N)\times (-1,1)$ and on $(-3N,-2N)\times(-1,1)$. Consequently,
\[
\iint_{\overline{R}_N}\left|\frac{\partial v_N}{\partial x}\right|^2=\int_{-2N}^{-N}\int_{-1}^1\left|\frac{\partial v_N}{\partial x}\right|^2+\int_{N}^{2N}\int_{-1}^1\left|\frac{\partial v_N}{\partial x}\right|^2\le 
\left(\frac{2}{N}\right)^2(2N)\leq \frac{16}{N}.
\]
\end{proof}

Using the fact that $v_N$ has smaller energy than $u_N$ ``slice by slice", but larger energy overall, we can transfer \eqref{dvdx} to $u_N$:

\begin{lemma}\label{l:xderivbound} Any $u_N$ which minimizes $J$ in $\overline{R}_N$ with $u_N(x,\pm 1)=\pm f_N(x)$, satisfies
\begin{equation}\label{ux} \iint_{\overline{R}_N}\left|\frac{\partial u_N}{\partial x}\right|^2\le \frac{16}{N}.
\end{equation}
\end{lemma}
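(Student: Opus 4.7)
The plan is to exploit the two competing minimality properties: $u_N$ minimizes the full two-dimensional energy $J$, while $v_N$ minimizes the one-dimensional slice energy $H_x$ for every $x$. Combining these yields the claimed bound.

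First I would write out both energies explicitly. Since $\lambda_+ = \lambda_- = 1$, for any function $w$ on $\overline{R}_N$ we have
\begin{equation*}
J(w, \overline{R}_N) = \iint_{\overline{R}_N}\left|\frac{\partial w}{\partial x}\right|^2\,dx\,dy \;+\; \int_{-3N}^{3N} H_x(w(x,\cdot))\,dx,
\end{equation*}
where $H_x$ is the slice functional defined in \eqref{slice energy}. Note that $u_N$ and $v_N$ share the boundary data $\pm f_N(x)$ on $I_N\times\{\pm 1\}$ (and, for the slice viewpoint, the boundary values on the vertical sides of $\overline{R}_N$ can either be inherited from $v_N$'s construction as a one-dimensional minimizer or be dealt with by prescribing matching linear interpolation data as described before the lemma). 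In particular, $v_N$ is an admissible competitor for the minimization problem defining $u_N$.

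Next I would apply the two minimality statements. Because $v_N(x,\cdot)$ minimizes $H_x$ on each slice with the correct boundary values $\pm f_N(x)$, we have the pointwise inequality $H_x(v_N(x,\cdot)) \le H_x(u_N(x,\cdot))$ for every $x \in I_N$. Integrating in $x$ gives
\begin{equation*}
\int_{-3N}^{3N} H_x(v_N(x,\cdot))\,dx \;\le\; \int_{-3N}^{3N} H_x(u_N(x,\cdot))\,dx.
\end{equation*}
On the other hand, since $u_N$ minimizes $J$ over $\overline{R}_N$ with the prescribed data, and $v_N$ is admissible, $J(u_N,\overline{R}_N) \le J(v_N,\overline{R}_N)$. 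Subtracting the slice-integrated inequality from the full-energy inequality, the $\int H_x$ terms cancel favorably and we are left with
\begin{equation*}
\iint_{\overline{R}_N}\left|\frac{\partial u_N}{\partial x}\right|^2 \;\le\; \iint_{\overline{R}_N}\left|\frac{\partial v_N}{\partial x}\right|^2.
\end{equation*}
The proof then concludes by invoking Lemma \ref{l:dvdxnotbig} to bound the right-hand side by $16/N$.

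The only mild obstacle is bookkeeping the boundary conditions on the vertical sides $\{\pm 3N\}\times[-1,1]$ so that $v_N$ really is an admissible competitor for $u_N$; this is already addressed in the paragraph after \eqref{e:undef}, where the authors note that either partial Dirichlet data is allowed or one prescribes the linear interpolation (which is precisely what the one-dimensional slice minimizer $v_N$ produces at $x = \pm 3N$, since there $f_N = 2 \ge 1$ and $v_N$ is linear by Case 1 of Lemma \ref{slicedmin}). Once this is noted, the argument is essentially a one-line algebraic manipulation of the two minimality inequalities.
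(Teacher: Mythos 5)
Your proposal is correct and follows essentially the same route as the paper: decompose $J$ into the $x$-derivative term plus the integrated slice energies, combine the global minimality $J(u_N)\le J(v_N)$ with the integrated slice minimality $\int H_x(v_N)\,dx \le \int H_x(u_N)\,dx$ to cancel the slice terms, and conclude via Lemma \ref{l:dvdxnotbig}. Your extra care about admissibility of $v_N$ on the vertical sides is a reasonable addition but not a departure from the paper's argument.
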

\begin{proof}
Since $u_N$ is a minimizer of $J$,
\begin{equation}\label{uu}
\iint_{\overline{R}_N} |\nabla u_N|^2+|\{u_N\neq 0\}|\le \iint_{\overline{R}_N}|\nabla v_N|^2+|\{v_N\neq 0\}|.
\end{equation}
Moreover, for every $x\in[-3N,3N]$ fixed, $v_N(x,\cdot)$ is a minimizer of $H_x$, hence
\[
\int_{-1}^1\left|\frac{\partial v_N}{\partial y}\right|^2dy+|\{v_N(x,\cdot)\neq 0\}|\le\int_{-1}^1\left|\frac{\partial u_N}{\partial y}\right|^2dy+|\{u_N(x,\cdot)\neq 0\}|.
\]
Integrating the last inequality 
on 
$[-3N,3N]$ leads to
\begin{equation}\label{vv}
\iint_{\overline{R}_N}\left|\frac{\partial v_N}{\partial y}\right|^2+|\{v_N \neq 0\}|\le\iint_{\overline{R}_N}\left|\frac{\partial u_N}{\partial y}\right|^2+|\{u_N \neq 0\}|.
\end{equation}
Combining \eqref{uu} and \eqref{vv} we conclude
\[
\iint_{\overline{R}_N} \left|\frac{\partial u_N}{\partial x}\right|^2\le\iint_R \left|\frac{\partial v_N}{\partial x}\right|^2.
\]
\end{proof}

Our next goal is to prove a uniform Lipschitz bound on $u_N$. To do so, we will compare it with $v_N$. Since the latter minimizes the energy on each slice, it will be convenient to integrate this ``slice-by-slice" energy across all values of $x$:

\begin{definition}\label{slicedtotal}
Define the ``total sliced energies''
of a function $w$ by

\begin{equation}
S(w)=\int_{-3N}^{3N}\int_{-1}^1\left|\frac{\partial w}{\partial y}\right|^2dydx+|\{w\neq 0\}|=\int_{-3N}^{3N}H_x(w(x,\cdot))dx,
\end{equation}
and with $Q = [a,b] \times [-1, 1]\subset \overline{R}_N$,
\begin{equation}
    S_Q(w)=\iint_Q\left|\frac{\partial w}{\partial y}\right|^2dydx+|\{w\neq 0\}\cap Q|=\int_{a}^bH_x(w(x,\cdot))dx.
\end{equation}
\end{definition}

The following lemma encapsulates the fact that $u_N$ is a minimizer and $v_N$ is a slice minimizer, written in the language of total slice energy: 

\begin{lemma} We have
\begin{equation}\label{m1}
S(v_N)\le S(u_N)\le J(u_N)\le S(v_N)+\frac{16}{N}.
\end{equation}
\end{lemma}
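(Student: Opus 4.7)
The plan is to split $J$ into the ``slice'' energy $S$ plus the $x$-derivative energy, and then chain together the three ingredients already at hand. The basic observation I would use is the trivial identity
$$J(w) = S(w) + \iint_{\overline{R}_N} \left|\frac{\partial w}{\partial x}\right|^2\,dx\,dy,$$
valid for any $w \in W^{1,2}(\overline{R}_N)$ (it is just the decomposition $|\nabla w|^2 = |\partial_x w|^2 + |\partial_y w|^2$, combined with the fact that $\{w\neq 0\}$ is counted once in both $J(w)$ and $S(w)$). Applied to $w = u_N$, this immediately gives the middle inequality $S(u_N) \le J(u_N)$, since the $x$-derivative integral is nonnegative.

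For the leftmost inequality, I would invoke slicewise minimality of $v_N$: for each fixed $x \in [-3N,3N]$, the function $u_N(x,\cdot)$ is a valid competitor in the one-dimensional minimization problem defining $v_N(x,\cdot)$, because both satisfy the boundary conditions $\pm f_N(x)$ at $y = \pm 1$. Hence $H_x(v_N(x,\cdot)) \le H_x(u_N(x,\cdot))$, and integrating in $x$ over $[-3N,3N]$ together with the identity $S(w) = \int_{-3N}^{3N} H_x(w(x,\cdot))\,dx$ yields $S(v_N) \le S(u_N)$.

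For the rightmost inequality, I would use that $v_N$ is admissible as a competitor for the global minimization problem defining $u_N$: it has the correct boundary values $\pm f_N(x)$ on $I_N \times \{\pm 1\}$ by construction, and the explicit formulas in Lemma \ref{slicedmin} together with the Lipschitz continuity of $f_N$ put $v_N$ in $W^{1,2}(\overline{R}_N)$. Global minimality of $u_N$ then gives $J(u_N) \le J(v_N)$, and applying the above identity to $v_N$ and invoking Lemma \ref{l:dvdxnotbig} finishes the proof:
$$J(v_N) = S(v_N) + \iint_{\overline{R}_N}\left|\frac{\partial v_N}{\partial x}\right|^2 \le S(v_N) + \frac{16}{N}.$$

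There is no genuinely hard step here — the lemma is just bookkeeping that packages Lemma \ref{l:dvdxnotbig} and the two comparison properties (slicewise minimality of $v_N$, global minimality of $u_N$) into a single chain of inequalities. The one point to be mildly careful about is the admissibility of $v_N$ as a global competitor, but this is immediate from Lemma \ref{slicedmin}. Conceptually, the content of the lemma is that the energetic ``price'' $u_N$ pays for being a genuine minimizer rather than merely a slicewise minimizer is at most $16/N$, which will be the engine driving the forthcoming uniform Lipschitz bound.
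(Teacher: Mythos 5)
Your proof is correct and follows essentially the same route as the paper: the decomposition $J(w)=S(w)+\iint|\partial_x w|^2$, slicewise minimality of $v_N$ for the left inequality, and global minimality of $u_N$ against the competitor $v_N$ combined with Lemma \ref{l:dvdxnotbig} for the right inequality. The only addition is your explicit remark on the admissibility of $v_N$ as a global competitor, which the paper takes for granted.
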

\begin{proof}
Let $w\in W^{1,2}(\overline{R}_N)$. Notice that 
\begin{equation}\label{HJ}
S(w)= \iint_{\overline{R}_N}\left|\frac{\partial w}{\partial y}\right|^2+|\{w\neq 0\}|\le J(w), \quad 
S(w)+\iint_{\overline{R}_N}\left|\frac{\partial w}{\partial x}\right|^2=J(w).
\end{equation}
Using \eqref{HJ} and the fact that $H_x(v_N(x,\cdot))\le H_x(u_N(x,\cdot))$ a.e., we obtain
\[
S(v_N)\le S(u_N)\le J(u_N).
\]
Combining this with the equality in \eqref{HJ} and with \eqref{dvdx}, we conclude that if $u_N$ is a minimizer of $J$, then
\[
S(v_N)\le S(u_N)\le J(u_N)\le J(v_N)=S(v_N)+\iint_{\overline{R}_N}\left|\frac{\partial v_N}{\partial x}\right|^2\le S(v_N)+\frac{16}{N}.
\]
\end{proof}

We can also localize these estimates to $Q=[a,b]\times[-1,1]$:

\begin{lemma}\label{L:slicedQ} We also have
\begin{equation}\label{slicedQ}
S_Q(u_N)\le S_Q(v_N)+\frac{16}{N}.
\end{equation}
\end{lemma}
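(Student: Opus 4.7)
The plan is to deduce the local inequality from the global one already established, using the fact that $v_N$ beats $u_N$ on every single slice.

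First I would record the slice-wise minimality of $v_N$: by definition, for a.e.\ $x \in [-3N, 3N]$ the slice $v_N(x,\cdot)$ minimizes $H_x$ under the constraint $v_N(x,\pm 1) = \pm f_N(x)$, while $u_N(x,\cdot)$ is a competitor with the same boundary values. Hence $H_x(v_N(x,\cdot)) \le H_x(u_N(x,\cdot))$ for a.e.\ $x$. Integrating this slice-wise inequality over any interval, and in particular over the complement $[-3N, 3N] \setminus [a,b]$, yields the localized comparison
\begin{equation*}
S_{Q^c}(v_N) \le S_{Q^c}(u_N),
\end{equation*}
where $Q^c := \overline{R}_N \setminus Q = \bigl([-3N, a] \cup [b, 3N]\bigr) \times [-1,1]$.

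Next I would combine this with the global bound \eqref{m1}. Writing
\begin{equation*}
S(u_N) = S_Q(u_N) + S_{Q^c}(u_N), \qquad S(v_N) = S_Q(v_N) + S_{Q^c}(v_N),
\end{equation*}
the inequality $S(u_N) \le S(v_N) + \frac{16}{N}$ becomes
\begin{equation*}
S_Q(u_N) + S_{Q^c}(u_N) \le S_Q(v_N) + S_{Q^c}(v_N) + \frac{16}{N}.
\end{equation*}
Rearranging and using the slice-wise comparison $S_{Q^c}(v_N) \le S_{Q^c}(u_N)$ to cancel the ``outside'' contribution, I obtain
\begin{equation*}
S_Q(u_N) \le S_Q(v_N) + \bigl(S_{Q^c}(v_N) - S_{Q^c}(u_N)\bigr) + \frac{16}{N} \le S_Q(v_N) + \frac{16}{N},
\end{equation*}
which is exactly \eqref{slicedQ}.

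There is no real obstacle here; the whole point is that the $16/N$ error in \eqref{m1} comes entirely from the $x$-derivative estimate \eqref{dvdx}, which is an estimate on $v_N$ alone, and not from any slice-wise defect. Once the slice-wise minimality of $v_N$ is isolated and applied to the complementary region, the error cannot redistribute onto $Q^c$. The only minor care needed is to verify that the decomposition $S(w) = S_Q(w) + S_{Q^c}(w)$ holds as an equality for $w \in \{u_N, v_N\}$, which is immediate from the definition of $S_Q$ as an integral of $H_x$ over $x \in [a,b]$.
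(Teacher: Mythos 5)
Your proof is correct and follows essentially the same route as the paper: both use the decomposition $S = S_Q + S_{Q^c}$, the integrated slice-wise minimality of $v_N$ on the complement, and the global bound \eqref{m1}. The only difference is cosmetic — the paper phrases the rearrangement as a proof by contradiction, while you argue directly.
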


\begin{proof} Given a subset $X\subset[-3N,3N]\times[-1,1]$ and defining
\[
S_X(u_N)=\iint_X\left|\frac{\partial u_N}{\partial y}\right|^2dxdy+|\{u_N\neq 0\}\cap X|,
\]
we obtain $S(u_N)=S_Q(u_N)+S_{{\overline{R}_N}\setminus Q}(u_N)$. 
If we had $S_Q(u_N)>S_Q(v_N)+\frac{16}{N}$, then 
\begin{equation}\label{contra}
S(u_N)>S_Q(v_N)+\frac{16}{N}+
S_{\overline{R}_N\setminus Q}(u_N).
\end{equation}
Since for a.e. $x\in[-3N,3N]$ we have $H_x(v_N(x,\cdot))\le H_x(u_N(x,\cdot))$, integrating this inequality we obtain
$S_{{\overline{R}_N}\setminus Q}(v_N)\leq S_{{\overline{R}_N}\setminus Q}(u_N)$. Together with \eqref{contra}, this gives
\[
S(u_N)>S_Q(v_N)+\frac{16}{N}+S_{{\overline{R}_N}\setminus Q}(v_N)=S(v_N)+\frac{16}{N},
\]
contradicting \eqref{m1}.
\end{proof}

With these estimates we are almost ready to prove 
the (uniform) Lipschitz continuity of the $u_N$ on compact subsets of ${\overline{R}_N}$. We introduce the following notation, by analogy with Definition \ref{slicedtotal}, 
for $Q = [a,b] \times [-1,1]$ 
\begin{equation}\label{e:localizedenergy} 
J_Q(w) := \iint_{Q} |\nabla w|^2\, dA + |Q \cap \{w\neq 0\}|.
\end{equation}

Our first result is an immediate corollary of Lemma \ref{L:slicedQ} and \eqref{ux}:

\begin{corollary}\label{c:uniformenergybound}
There exists a constant $C_0 > 0$ such that for any $N > 0$ and any $Q = [a,b]\times [-1,1] \subset \overline{R}_N$ we have $$J_Q(u_N) \leq C_0|b-a| + \frac{32}{N}. 
$$
\end{corollary}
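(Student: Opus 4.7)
The plan is to decompose the localized energy $J_Q(u_N)$ according to the identity
\[
J_Q(w) = S_Q(w) + \iint_{Q} \left|\frac{\partial w}{\partial x}\right|^2\, dA,
\]
which follows immediately from the definitions of $J_Q$ and $S_Q$. Applied to $w = u_N$, this splits the task into bounding $S_Q(u_N)$ and the $L^2$-norm of $\partial u_N/\partial x$ on $Q$.

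For the $x$-derivative term, I would simply note that since $Q \subset \overline{R}_N$, Lemma \ref{l:xderivbound} gives
\[
\iint_Q \left|\frac{\partial u_N}{\partial x}\right|^2 \leq \iint_{\overline{R}_N} \left|\frac{\partial u_N}{\partial x}\right|^2 \leq \frac{16}{N}.
\]
For the sliced energy term, I would invoke Lemma \ref{L:slicedQ} to pass to the slice minimizer and obtain $S_Q(u_N) \leq S_Q(v_N) + 16/N$. The key remaining observation is that the explicit formulas in Lemma \ref{slicedmin} make $H_x(v_N(x,\cdot))$ a bounded function of $x$: since $\|f_N\|_\infty \leq 2$, Case 1 yields $H_x(v_N) = 2 f_N(x)^2 + 2 \leq 10$, and Case 2 yields $H_x(v_N) = 4 f_N(x) \leq 4$. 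Thus $H_x(v_N(x,\cdot)) \leq 10$ for every $x\in[-3N,3N]$, and integrating over $[a,b]$ gives $S_Q(v_N) \leq 10\,|b-a|$.

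Combining these two estimates produces
\[
J_Q(u_N) = S_Q(u_N) + \iint_Q \left|\frac{\partial u_N}{\partial x}\right|^2 \leq S_Q(v_N) + \frac{16}{N} + \frac{16}{N} \leq 10\,|b-a| + \frac{32}{N},
\]
so the conclusion holds with $C_0 = 10$. There is no real obstacle here: the estimate is a bookkeeping consequence of the three preceding lemmas, and the only small point to verify is the uniform bound on $H_x(v_N)$, which reduces to the uniform boundedness of $f_N$.
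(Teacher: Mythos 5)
Your proposal is correct and follows essentially the same route as the paper: decompose $J_Q(u_N)$ into $S_Q(u_N)$ plus the $x$-derivative term, bound the latter by $16/N$ via Lemma \ref{l:xderivbound}, bound $S_Q(u_N)$ by $S_Q(v_N)+16/N$ via Lemma \ref{L:slicedQ}, and use the explicit formulas of Lemma \ref{slicedmin} together with $\|f_N\|_\infty \le 2$ to get $H_x(v_N(x,\cdot)) \le 10$. This matches the paper's argument, including the constant $C_0 = 10$.
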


\begin{proof}
From Lemma \ref{L:slicedQ} we have that $S_Q(u_N) \leq S_Q(v_N) + \frac{16}{N}$ and from \eqref{ux} we have $\iint_Q |\partial_x u_N|^2\, dA \leq \frac{16}{N}$. Putting this together we get that $$J_Q(u_N) \leq S_Q(v_N) + \frac{32}{N}.$$ 

Thus it suffices to show that $S_Q(v_N)$ grows proportionally to $|b-a|$ with a constant of proportionality independent of $a,b, N$. Indeed, by Lemma \ref{slicedmin} $H_x(v_N) \leq \max\{2f_N(x)^2 + 2, 4f_N(x)\} \leq 10$. Integrating that across $[a,b]$ gives the desired result. 
\end{proof}

From here we can conclude the main result of this section, the uniform Lipschitz continuity:

\begin{theorem}\label{t:uniformlip}
For $0 < \delta < 1$, we can find constants $L \geq1$ and $N_0 > 0$ such that
$$\|u_N\|_{\mathrm{Lip}(\Omega_N)} \leq L$$
for $N \geq N_0$,
where $\Omega_N = (-3N+\delta, 3N-\delta) \times (-1+\delta, 1-\delta)$.
\end{theorem}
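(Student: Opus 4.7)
The plan is to obtain the uniform Lipschitz bound by combining a uniform $L^\infty$ bound on $u_N$ with the classical interior Lipschitz regularity for minimizers of the Alt--Caffarelli--Friedman energy, then to verify that every constant that enters is independent of $N$.

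First I would establish the uniform sup-norm bound $\|u_N\|_{L^\infty(\overline{R}_N)} \le 2$. This follows from a standard truncation argument: since the prescribed Dirichlet data $\pm f_N$ satisfies $|f_N|\le 2$, the truncation $\tilde u_N := (u_N \wedge 2)\vee (-2)$ is an admissible competitor (whether we impose linear Dirichlet data on the vertical sides of $\overline{R}_N$, in which case it already satisfies $|\tilde u_N|\le 2$ there, or leave the vertical sides free, in which case no constraint is violated). Pointwise a.e.\ one has $|\nabla \tilde u_N| \le |\nabla u_N|$ and $\{\tilde u_N \ne 0\}\subseteq \{u_N\ne 0\}$, with strict inclusion as soon as $\{|u_N|>2\}$ has positive measure. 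Minimality of $u_N$ thus forces $|u_N|\le 2$ a.e., and hence everywhere by the continuity of minimizers of $J$.

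Next I would invoke the interior Lipschitz regularity of Alt--Caffarelli--Friedman \cite{AlCaFr84}: any local minimizer $u$ of $J$ in an open domain $D$ is Lipschitz on each compact $K\subset D$, with Lipschitz constant depending only on $\|u\|_{L^\infty(D)}$, $\lambda_\pm$, and $\mathrm{dist}(K,\partial D)$. Applied to $u_N$ as a local minimizer on $R_N^\circ=(-3N,3N)\times(-1,1)$ and to $K=\overline{\Omega_N}$, this yields $\|u_N\|_{\mathrm{Lip}(\Omega_N)}\le L$ for a constant $L=L(2,1,\delta)$ that is independent of $N$, once $N_0$ is chosen so large that $\Omega_N\subset R_N^\circ$ (i.e. $3N_0>\delta$).

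The main obstacle worth flagging is precisely the uniformity of $L$ in $N$: every ingredient entering the constant in the classical theorem (the $L^\infty$-norm of the minimizer, the coefficients $\lambda_\pm$, and the distance to the boundary of the domain of minimality) must be controlled independently of $N$. Step one handles the $L^\infty$-norm; $\lambda_\pm$ are fixed at $1$; and the distance from $\overline{\Omega_N}$ to $\partial R_N$ is $\ge \delta$ by construction. As an alternative route that is internal to this paper, one could replace the black-box appeal to \cite{AlCaFr84} by running the Alt--Caffarelli iteration directly on strips: the uniform local energy bound of Corollary~\ref{c:uniformenergybound}, combined with the subharmonicity of $u_N^{\pm}$ (which follows from local minimality) and a non-degeneracy estimate inherited from minimality, yields a pointwise Lipschitz bound with the same parameters; either route produces the desired uniform constant $L$.
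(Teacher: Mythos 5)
Your proof is correct, but it obtains the crucial uniform-in-$N$ input by a different route than the paper. The paper's proof is a short appeal to a Lipschitz estimate from \cite{DaTo15} whose constant depends on the distance to the boundary and on the local \emph{$L^2$-norm of the gradient}; the uniform gradient bound is exactly Corollary \ref{c:uniformenergybound}, i.e.\ the output of the slice-minimizer comparison, followed by a covering argument. You instead normalize by the \emph{$L^\infty$-norm}, which you control via the truncation/maximum-principle argument, and then quote the interior Lipschitz estimate of \cite{AlCaFr84} in the form whose constant depends on $\|u\|_{L^\infty}$ and the distance to the boundary; your ``alternative route'' at the end is essentially the paper's actual proof. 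Both normalizations exist in the literature and both inputs are uniform in $N$ here, so either works: yours is more self-contained for this particular theorem (it does not need the slice-energy machinery), while the paper's recycles Corollary \ref{c:uniformenergybound}, which it has already established and uses elsewhere. One small correction to your truncation step: truncating at level $\pm 2$ does not shrink the zero set --- one has $\{\tilde u_N\neq 0\}=\{u_N\neq 0\}$ exactly, since truncation only alters values of modulus greater than $2$ --- so the strict energy decrease cannot come from the measure term. The standard way to close the argument is to note that minimality forces equality of energies, hence $\nabla u_N=0$ a.e.\ on the open set $\{|u_N|>2\}$, so $u_N$ is locally constant there; continuity then puts the constant value on the relative boundary of each component, where $|u_N|=2$, a contradiction unless the set is empty (using the boundary data to rule out a component filling all of $R_N$). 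This is a cosmetic fix, not a gap.
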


\begin{proof}
As $u_N$ is a minimizer, we can apply \cite[Theorem 8.1]{DaTo15} (c.f. the discussion at the bottom of page 504 in \cite{DaTo15}) which gives a Lipschitz bounds on almost-minimizers depending only on the distance from the boundary and the $L^2$-norm of the gradient (see also \cite[Remark 2.2]{DaEnTo19}). Corollary \ref{c:uniformenergybound} gives uniform bounds on the $L^2$-norm of the gradient of $u_N$ inside of any rectangle and a covering argument finishes the proof.
\end{proof}

\section{Existence of a Zero set}\label{s:poolexists}

The goals of this section are two fold. First to prove that $|\{u_N = 0\}| > 0$ 
(which will follow from Lemma \ref{l:doesntdiplow})
and second to show that the set $\{u_N = 0\}$ does not get too close to the boundary of $R_N$ (Lemma \ref{l:nogostripone} and Corollary \ref{c:reifenbergflat}).

Let us first describe the zero set of $v_N$, the ``slice minimizer":

\begin{lemma} The following holds regarding the set $\{v_N(x,y)=0\}$:
\begin{itemize}
    \item When $|x|\ge \frac{(1+2\alpha)N}{1+\alpha}$, $v_N(x,y)=0$ only when $y=0$. 
\item When $|x|\le N$, $v_N(x,y)=0$ for $|y|\le \alpha$. 

\item When $N<|x|<\frac{(1+2\alpha)N}{1+\alpha}$, $v_N(x,y)=0$ when $|y|\le 2\alpha+1-\frac{|x|(1+\alpha)}{N}$.
\end{itemize}

\end{lemma}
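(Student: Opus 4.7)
The proof is essentially a direct unpacking of Lemma \ref{slicedmin} once we determine, as a function of $x$, whether we are in Case~1 ($f_N(x) \ge 1$) or Case~2 ($f_N(x) < 1$). So the plan is purely computational and there is no real obstacle; the only thing to verify is where the two cases meet.

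First, I would locate the threshold. Using the explicit piecewise formula for $f_N$, on $|x| \le N$ we have $f_N(x) = 1-\alpha < 1$, on $2N \le |x| \le 3N$ we have $f_N(x) = 2 \ge 1$, and on the middle strip $N \le |x| \le 2N$ the function $f_N(x) = |x|(1+\alpha)/N - 2\alpha$ is affine and increasing in $|x|$. Solving $f_N(x) = 1$ on this middle strip gives exactly $|x| = (1+2\alpha)N/(1+\alpha)$, which lies in $(N, 2N)$ as long as $\alpha \in (0,1)$. Hence $f_N(x) \ge 1$ iff $|x| \ge (1+2\alpha)N/(1+\alpha)$.

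Next, I would apply Lemma \ref{slicedmin} in each regime. In Case~1 ($|x| \ge (1+2\alpha)N/(1+\alpha)$), the slice minimizer is the linear function $v_N(x,y) = y f_N(x)$ with $f_N(x) \ge 1 > 0$, so its only zero in $[-1,1]$ is $y=0$, yielding the first bullet. In Case~2 ($|x| < (1+2\alpha)N/(1+\alpha)$), we have $v_N(x,y) = \operatorname{sgn}(y)(|y| - 1 + f_N(x))_+$, which vanishes exactly when $|y| \le 1 - f_N(x)$. To finish, I would substitute the appropriate piece of $f_N$:
\begin{itemize}
\item For $|x| \le N$, $f_N(x) = 1-\alpha$, so $1 - f_N(x) = \alpha$, giving $\{v_N(x,\cdot) = 0\} = \{|y| \le \alpha\}$, which is the second bullet.
\item For $N < |x| < (1+2\alpha)N/(1+\alpha)$, $f_N(x) = |x|(1+\alpha)/N - 2\alpha$, so $1 - f_N(x) = 2\alpha + 1 - |x|(1+\alpha)/N$, giving the third bullet.
\end{itemize}

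There is no real difficulty here; the statement is immediate from Lemma \ref{slicedmin} once the value $(1+2\alpha)N/(1+\alpha)$ is identified as the transition point between the two cases. One could additionally note (for sanity) that the three cases overlap consistently at the boundaries $|x| = N$ and $|x| = (1+2\alpha)N/(1+\alpha)$: at the former, $2\alpha + 1 - (1+\alpha) = \alpha$, matching bullet two; at the latter, $2\alpha + 1 - (1+2\alpha) = 0$, matching bullet one ($y=0$ only).
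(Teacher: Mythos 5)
Your proposal is correct and follows essentially the same route as the paper: identify $|x| = (1+2\alpha)N/(1+\alpha)$ as the transition point where $f_N(x)=1$, then read off the zero set from the two cases of Lemma \ref{slicedmin}. The extra verification that the formulas agree at the interface points is a nice sanity check but not needed.
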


\begin{proof} The result follows from the following simple observations:

\begin{itemize}
    \item When $|x|\ge \frac{(1+2\alpha)N}{1+\alpha}$, $f_N(x)\ge 1$. In this case $v_N(x,y)=yf_N(x)$.
    \item When $|x|\le N$, $f_N(x)\equiv 1-\alpha$, and $v_N(x,y)=\text{sgn}(y)(|y|-\alpha)_+$.
    \item In the remaining interval, $f_N(x)=\frac{|x|(1+\alpha)}{N}-2\alpha$ and $v_N(x,y)=\text{sgn}(y)(|y|-1+\frac{|x|(1+\alpha)}{N}-2\alpha)_+$.
\end{itemize}
\end{proof}

\begin{center}
\begin{tikzpicture}[scale=1.2]
\draw [thick, smooth, ->] (-5,0) -- (5,0); 
\draw [thick, smooth, ->] (0,-1.5) -- (0,1.5);
\node at (1.5,-0.4) {\small $N$};
\node at (2.1, -0.2) {\tiny $\frac{(1+2\alpha)N}{1+\alpha}$};
\filldraw (1.70,0) circle (0.03cm);
\node at (-2.2, -0.2) {\tiny $\frac{-(1+2\alpha)N}{1+\alpha}$};
\filldraw (-1.70,0) circle (0.03cm);
\filldraw (1.5,0) circle (0.03cm);
\node at (-1.4,-0.4) {\small $-N$};
\filldraw (-1.5,0) circle (0.03cm);
\node at (3,-0.2) {\small $2N$};
\filldraw (3,0) circle (0.03cm);
\node at (-3.1,-0.2) {\small $-2N$};
\filldraw (-3,0) circle (0.03cm);
\node at (4.72,-0.2) {\small $3N$};
\filldraw (4.5,0) circle (0.03cm);
\node at (-4.72,-0.2) {\small $-3N$};
\filldraw (-4.5,0) circle (0.03cm);
\node at (5.2, -0.2) {\small $x$};
\node at (0.3, 1.4) {\small $y$};
\node at (-0.2, 1.2) {\small $1$};
\node at (-0.2, -1.2) {\small $-1$};
\node at (-0.15,0.32) {\small $\alpha$};
\node at (-0.2,-0.32) {\small $-\alpha$};
\filldraw (0,0.2) circle (0.03cm);
\filldraw (0,-0.2) circle (0.03cm);
\filldraw (0,1) circle (0.03cm);
\filldraw (0,-1) circle (0.03cm);
\draw [thick, smooth] (-4.5,1) -- (4.5,1); 
\draw [thick, smooth] (-4.5,-1) -- (4.5,-1);
\draw [thick, smooth] (-4.5,-1) -- (-4.5,1); 
\draw [thick, smooth] (4.5,-1) -- (4.5,1); 
\draw [thick, blue, smooth, fill=blue] (-1.5,-0.2) rectangle (1.5,0.2);
\filldraw[draw=blue, fill=blue] (1.5,0.2) -- (1.7,0) -- (1.5,-0.2) -- (1.5,0.2) -- cycle;
\filldraw[draw=blue, fill=blue] (-1.5,0.2) -- (-1.7,0) -- (-1.5,-0.2) -- (-1.5,0.2) -- cycle;
\draw [thick, blue] (-4.5,0) -- (-1.7,0); 
\draw [thick, blue] (1.7,0) -- (4.5,0);
\draw [-<] (0.8,0.5) to[out=45, in =230] (0.6,0.3) ;
\node at (1.2,0.6) {\small $v=0$};
\end{tikzpicture}
\end{center}

We expect a minimizer $u_N$ of $J$, taken among all functions $w\in W^{1,2}({\overline{R}_N})$ with $w(x,\pm 1)=\pm f_N(x)$,\footnote{Recall we do not need to specify the data on the ``vertical" components of the boundary} to look similar to $v_N$. In particular, we want to extract information about its zero set, and prove that $\{u_N=0\}$ has a ``pool'' close to $0$.

Before we can prove this closeness, we need to observe that our minimizer is nice  on 
``most" of the vertical slices.

\begin{definition} 
Let $X_0\subset I$ be the smallest set such that $x\notin X_0$ implies that $u_N(x,\cdot)\in W^{1,2}([-1,1])$ and $\lim\limits_{y\rightarrow \pm 1}u_N(x,y)=\pm f_N(x)$. 

Since $u_N \in W^{1,2}(R_N)$ we note that $X_0$ has measure zero. 
\end{definition}

We now show that the zero set of $u_N$ does not get too close to the ``top" or ``bottom" of the rectangle. We start by showing that if $u_N$ is small close to the top or bottom of the rectangle, then that slice has large energy:

\begin{lemma}\label{ifusmallenergybig}
Let $\varepsilon \in (0,1), \delta \in (0,1/2)$ and assume that $|u_N(x, y)| < \delta$ for some $x\in [-3N, 3N]\backslash X_0$ and some $y$ with $1-|y| < \varepsilon$. Then $H_x(u_N(x,-)) \geq \frac{(f_N(x) - \delta)^2}{\varepsilon}$. 

In particular, if 
$x \in I \setminus X_0$ and there exists $y$ such that $1-|y| < \frac{1}{44}$ 
and $|u_N(x,y)| < \frac{1}{4}$, then $H_x(u_N(x, -)) \geq H_x(v_N(x,-)) + 1$.
\end{lemma}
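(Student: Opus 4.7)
The plan is to treat the two assertions in sequence, with the first being a straightforward Cauchy--Schwarz argument and the second being a numerical check using the explicit formulas of Lemma \ref{slicedmin}.

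For the first assertion, since $x \notin X_0$ the function $y \mapsto u_N(x,y)$ lies in $W^{1,2}([-1,1])$ and attains the boundary values $\pm f_N(x)$ at $y = \pm 1$. Assume without loss of generality that $y > 0$, so $y \in (1-\varepsilon, 1)$ and $u_N(x,1) = f_N(x) > 0$. On the interval $[y,1]$, whose length is at most $\varepsilon$, the function $u_N(x,\cdot)$ changes from a value of absolute value less than $\delta$ to the value $f_N(x)$, hence changes by at least $f_N(x) - \delta$. By the fundamental theorem of calculus and Cauchy--Schwarz,
\[
(f_N(x) - \delta)^2 \;\le\; \left(\int_y^1 \left|\tfrac{\partial u_N}{\partial y}\right| dt\right)^2 \;\le\; (1-y)\int_y^1 \left|\tfrac{\partial u_N}{\partial y}\right|^2 dt \;\le\; \varepsilon \int_{-1}^1 \left|\tfrac{\partial u_N}{\partial y}\right|^2 dt,
\]
so $\int_{-1}^1 |\partial_y u_N(x,\cdot)|^2 dy \ge (f_N(x)-\delta)^2/\varepsilon$. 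Since $H_x(u_N(x,\cdot))$ bounds this integral from above, the first assertion follows.

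For the second (``in particular'') assertion, apply the first with $\delta = 1/4$ and $\varepsilon = 1/44$, which yields
\[
H_x(u_N(x,\cdot)) \;\ge\; 44\bigl(f_N(x) - \tfrac14\bigr)^2.
\]
Recall that $f_N(x) \ge 1-\alpha = 9/10$ everywhere, so $f_N(x) - 1/4 \ge 13/20$ in all cases. I then compare against the explicit values of $H_x(v_N(x,\cdot))$ from Lemma \ref{slicedmin} in the two regimes. When $f_N(x) < 1$ we must verify $44(f_N(x)-1/4)^2 \ge 4 f_N(x) + 1$ on $[9/10, 1)$, which reduces to a quadratic inequality that holds at the endpoint $f_N(x) = 9/10$ (where the left side exceeds $18$ and the right side is below $5$) and only improves as $f_N(x)$ increases. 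When $f_N(x) \ge 1$ we must verify $44(f_N(x)-1/4)^2 \ge 2 f_N(x)^2 + 3$, i.e. $42 f_N(x)^2 - 22 f_N(x) - 1/4 \ge 0$; this holds at $f_N(x) = 1$ with comfortable margin and its derivative in $f_N(x)$ is positive for $f_N(x) \ge 1$, so it persists for all larger values.

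I do not anticipate any real obstacle here: the first part is a one-line Cauchy--Schwarz, and the second is a bookkeeping check that the constants $\varepsilon = 1/44$ and $\delta = 1/4$ were chosen comfortably. The only mild point of care is to remember that $f_N \ge 1-\alpha$ is bounded away from $0$ (so the quadratic lower bound on $H_x(u_N)$ does beat the linear upper bound on $H_x(v_N)$ in Case 2 even near its worst case $f_N(x) = 1 - \alpha$).
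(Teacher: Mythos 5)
Your proof is correct and follows essentially the same route as the paper: the first part is the same linear-interpolation/Cauchy--Schwarz bound on $[y,1]$, and the second part is the same substitution $\delta=\tfrac14$, $\varepsilon=\tfrac1{44}$. The paper just concludes the second part more crudely, using $f_N\ge \tfrac34$ and $H_x(v_N(x,-))\le 10$ to get $44(f_N(x)-\tfrac14)^2\ge 11\ge H_x(v_N(x,-))+1$ in one line, whereas you verify the two regimes of Lemma \ref{slicedmin} separately; both are fine.
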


\begin{proof}
Without loss of generality, assume $u_N(x,-)$ is both equal to $f_N(x)$ and $\delta$ on an interval of length $\varepsilon$. The lowest energy way to do this is assuming these values are achieved at the endpoints of the interval and that $u_N$ interpolates between them linearly. Thus $H_x(u_N(x, -)) \geq \frac{(f_N(x) - \delta)^2}{\varepsilon}$. The second result follows from the first once we remember that $f_N(x) \geq \frac{3}{4}$ for all $x$, and $H_x(v_N(x,-)) \leq 10$ for all $x$. 
\end{proof}

We are now ready to show the existence of a strip near the top and bottom of $R$, on which
 $u$ cannot vanish. We actually show something stronger which is that $u$ is quantitatively large in this strip:

\begin{lemma}\label{l:nogostripone}
Let $\delta > 0$ and set 
\begin{equation}\label{3a1}
R_\pm = \left\{ (x,r) \in R_N \, ; \,  0 \leq |x| \leq 3N-\delta \text{ and } 
\pm y \in \left[1-\frac{1}{44}, 1-\frac{1}{88}\right] \right\}
\end{equation}
(the two blue zones in the picture).
 Then there exists an $N_0 = N_0(\delta) > 1$ such that if $N > N_0$
then
\begin{equation}\label{3a2}
u_N(x,y) \geq 1/8 \ \text{ on $R_+$ and $u_N(x,y) \leq -1/8$ on } R_-.
\end{equation}
\end{lemma}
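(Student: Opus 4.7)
The plan is to argue by contradiction, combining the sliced-energy comparison of Lemma~\ref{L:slicedQ}, the uniform Lipschitz bound of Theorem~\ref{t:uniformlip}, and (the second part of) Lemma~\ref{ifusmallenergybig}. The idea is: if $u_N$ dips below $1/8$ somewhere in $R_+$, then by Lipschitz continuity it stays below $1/4$ on a horizontal neighborhood; on that neighborhood, each slice must either be small (in the sense of Lemma~\ref{ifusmallenergybig}) at $y_0$ itself, or, by the intermediate value theorem applied to the slice, achieve a zero somewhere between $y_0$ and $y=1$, which still lies in the forbidden strip $\{1-|y|<1/44\}$. Either way, the slice-energy exceeds that of $v_N$ by at least $1$ per unit $x$-length, and this contradicts Lemma~\ref{L:slicedQ} as soon as $N$ is large.

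Concretely, I would first invoke Theorem~\ref{t:uniformlip} with parameter $\delta' := \min(\delta, 1/100) < 1/88$ to obtain a Lipschitz constant $L\geq 1$ (and a first threshold $N_1(\delta)$) valid on a domain $\Omega_N$ containing both $R_+$ and a horizontal $\rho$-thickening of it, where I set $\rho := \min(1/(8L),\, \delta/2)$. Now suppose for contradiction that $u_N(x_0,y_0) < 1/8$ for some $(x_0,y_0)\in R_+$. By Lipschitz continuity in $x$, $u_N(x,y_0) < 1/4$ for every $x$ in $I_0 := (x_0-\rho,\, x_0+\rho)\subset[-3N,3N]$.

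For each such $x\in I_0 \setminus X_0$, I claim there is some $y_x$ with $1-|y_x| < 1/44$ and $|u_N(x,y_x)| < 1/4$. Indeed, either $u_N(x,y_0) > -1/4$ and we take $y_x := y_0$; or else $u_N(x,y_0)\leq -1/4$, in which case the slice $u_N(x,\cdot)$ (which is continuous by $W^{1,2}$ regularity in one dimension and has trace $f_N(x)\geq 9/10$ at $y=1$ because $x\notin X_0$) must vanish at some $y_x\in (y_0,1)$ by the intermediate value theorem, and $1-|y_x| < 1-|y_0|\leq 1/44$. In both cases Lemma~\ref{ifusmallenergybig} yields $H_x(u_N(x,\cdot)) \geq H_x(v_N(x,\cdot)) + 1$. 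Integrating this inequality over $x\in I_0$ (and recalling $|X_0|=0$), with $Q := I_0 \times [-1,1]$, gives
\[
S_Q(u_N) \geq S_Q(v_N) + 2\rho,
\]
which, combined with Lemma~\ref{L:slicedQ}'s bound $S_Q(u_N)\leq S_Q(v_N)+16/N$, forces $N \leq 8/\rho$. Choosing $N_0(\delta) := \max(N_1,\, 8/\rho+1)$ makes this a contradiction for $N>N_0$. The argument on $R_-$ is entirely parallel: assuming $u_N(x_0,y_0)>-1/8$ with $(x_0,y_0)\in R_-$, Lipschitz continuity gives $u_N(x,y_0)>-1/4$ on $I_0$, and in the alternative case $u_N(x,y_0)\geq 1/4$ the intermediate value theorem is run between $y=-1$ (where the trace is $-f_N(x)\leq -9/10$) and $y_0$.

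The only real subtlety is bookkeeping of constants: $\delta'$ must be chosen small enough that $R_\pm$ and their horizontal $\rho$-thickenings sit inside the Lipschitz domain $\Omega_N$ (which forces $\delta' < 1/88$), and $\rho$ must be small enough both to turn the ``$<1/8$'' bound into ``$<1/4$'' via the Lipschitz constant, and to keep $I_0$ inside $[-3N,3N]$. Once these numerical choices are fixed, the rest of the proof is a straightforward one-dimensional computation, and the real content is the slice-energy budget inherited from $v_N$.
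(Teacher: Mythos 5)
Your proposal is correct and follows essentially the same route as the paper: the uniform Lipschitz bound of Theorem \ref{t:uniformlip} spreads a bad value over an interval of slices, Lemma \ref{ifusmallenergybig} forces each such slice to exceed the slice-minimizer's energy by $1$, and integrating over that interval contradicts Lemma \ref{L:slicedQ} once $N$ is large. The only (minor) difference is that you treat magnitude and sign in a single pass, using the intermediate value theorem on each slice to produce a zero inside the forbidden strip when the sign is wrong, whereas the paper first proves $|u_N|\geq 1/8$ on $R_\pm$ and then fixes the sign via connectedness of $R_\pm$ together with a separate oscillation estimate; both variants work.
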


\begin{center}
\begin{tikzpicture}[scale=1.0]
\draw [thick, smooth, ->] (-5,0) -- (5,0); 
\draw [thick, smooth, ->] (0,-1.5) -- (0,1.5);
\node at (1.5,-0.2) {\small $N$};
\filldraw (1.5,0) circle (0.03cm);
\node at (-1.4,-0.2) {\small $-N$};
\filldraw (-1.5,0) circle (0.03cm);
\node at (3,-0.2) {\small $2N$};
\filldraw (3,0) circle (0.03cm);
\node at (-3.1,-0.2) {\small $-2N$};
\filldraw (-3,0) circle (0.03cm);
\node at (4.74,-0.22) {\small $3N$};
\filldraw (4.5,0) circle (0.03cm);
\node at (-4.87,-0.2) {\small $-3N$};
\filldraw (-4.5,0) circle (0.03cm);
\node at (5.2, -0.2) {\small $x$};
\node at (0.3, 1.4) {\small $y$};
\node at (-0.2, 1.2) {\small $1$};
\node at (-0.2, -1.2) {\small $-1$};
\filldraw (0,1) circle (0.03cm);
\filldraw (0,-1) circle (0.03cm);
\draw [thick, smooth] (-4.5,1) -- (4.5,1); 
\draw [thick, blue, smooth] (-4.3,0.9) -- (4.3,0.9); 
\draw [thick, blue, smooth] (-4.3,0.8) -- (4.3,0.8); 
\draw [thick, blue, smooth] (-4.3,0.8) -- (-4.3,0.9);
\draw [thick, blue, smooth] (4.3,0.8) -- (4.3,0.9);
\filldraw[draw=blue, fill=blue] (-4.3,0.9) rectangle (4.3,0.8);
\draw [thick, blue, smooth] (-4.3,-0.9) -- (4.3,-0.9); 
\draw [thick, blue, smooth] (-4.3,-0.8) -- (4.3,-0.8); 
\draw [thick, blue, smooth] (-4.3,-0.8) -- (-4.3,-0.9);
\draw [thick, blue, smooth] (4.3,-0.8) -- (4.3,-0.9);
\filldraw[draw=blue, fill=blue] (-4.3,-0.9) rectangle (4.3,-0.8);
\draw [thick, smooth] (-4.5,-1) -- (4.5,-1);
\draw [thick, smooth] (-4.5,-1) -- (-4.5,1); 
\draw [thick, smooth] (4.5,-1) -- (4.5,1); 
\end{tikzpicture}
\end{center}

\begin{proof}
Fix $\delta > 0$. Recall from Theorem \ref{t:uniformlip}
that there exists $L > 0$ (independent of $N$ but dependent on $\delta > 0$) such that if 
\[
\Omega = \left[-3N + 
\delta, 3N-
\delta\right] \times \left[-1+\frac{1}{100}, 1-\frac{1}{100}\right],
\]
then $\|u_N\|_{\mathrm{Lip}(\Omega)} \leq L$.

We first check that $|u_N(x_0,y_0)| < \frac18$ on $R_\pm$. Note that if
$|u_N(x_0,y_0)| < \frac18$ for some $(x_0,y_0) \in [-3N + \delta, 3N-\delta]\times ([-1 +\frac{1}{88}, -1+\frac{1}{44}]\cup [1-\frac{1}{44}, 1-\frac{1}{88}])$, then by Lipschitz continuity there exists an interval 
$I \subset [-3N + \delta, 3N-\delta]$ of length $\frac{1}{8L}$ such that $|u(x,y_0)| < \frac{1}{4}$ for all $x\in I$. 

We apply Lemma \ref{ifusmallenergybig} to conclude that for almost every $x\in I$ we have $H_x(u_N(x,-)) \geq H_x(v_N(x,-)) + 1$. If $Q = I \times [-1,1]$, then this implies that $S_Q(u_N) \geq S_Q(v_N) + |I| = S_Q(v_N) + \frac{1}{8L}.$ Of course, this contradicts \eqref{slicedQ} as long as $\frac{16}{N} < \frac{1}{8L}\Leftrightarrow 128L < N$.

Now we check that $u$ has the right sign on $R_\pm$. Suppose for instance 
that $u(x,y) \leq -1/8$ somewhere on $R_+$. Since $u$ is continuous, $u(x,y) \leq  -1/8$ 
everywhere on $R_+$. Then for all $x \in \left[-3N + \delta, 3N-\delta\right] \setminus X_0$,
$u(x,-)$ is a Sobolev function that goes from $-1/8$ to at least $1/4$ in an interval of
length at most $1/88$, a direct computation shows that 
$H_x(u_N(x,-)) \geq \frac{3}{8} \times 88 \geq 33$, and we reach a contradiction as above.
\end{proof}

Now we show that $\{u_N = 0\}$ must be contained in a strip around $\{y=0\}$ when $|x| < N$. Actually we prove something more precise: 

\begin{lemma}\label{l:doesntdiplow}
There exists an $N > 1$ large enough such if $|x| < N-1$, and $u_N(x,y) > 0$, then $y > \alpha/8$. Similarly, if $u_N(x,y) < 0$ then $y < -\alpha/8$. 
\end{lemma}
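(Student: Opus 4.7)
The plan is to argue by contradiction: suppose there is $(x_0,y_0) \in R_N$ with $|x_0| < N-1$, $y_0 \leq \alpha/8$, and $m := u_N(x_0,y_0) > 0$. The idea is to produce a quantitative per-slice excess of $H_x(u_N(x,\cdot))$ over $H_x(v_N(x,\cdot))$, concentrated on an interval of $x$-values whose total contribution violates Lemma~\ref{L:slicedQ} once $N$ is large enough. The positive and negative cases are symmetric, so I focus on $u_N(x_0,y_0) > 0$.

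First, by the uniform Lipschitz bound from Theorem~\ref{t:uniformlip}, $u_N \geq m/2$ on $B_{m/(2L)}(x_0,y_0)$. In particular, for every slice $x \in (x_0 - m/(2L),\, x_0 + m/(2L))$, the function $u_N(x,\cdot)$ is strictly positive at the height $y_0 \leq \alpha/8$. Moreover, the connected component $C$ of $\{u_N>0\}$ containing $(x_0,y_0)$ must intersect $\{y=1\}$: otherwise, replacing $u_N$ by $0$ on $C$ produces an admissible competitor whose energy is strictly smaller (we save $\int_C |\nabla u_N|^2 + |C| > 0$), contradicting minimality. By Lemma~\ref{l:nogostripone}, $C$ therefore meets the strip $R_+$, on which $u_N \geq 1/8$.

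Next I compare slice energies. Decompose $H_x(u_N(x,\cdot)) = H^+_x(u_N^+) + H^-_x(u_N^-)$, where $H^{\pm}_x$ are the one-phase 1D energies on $u_N^\pm = \max(\pm u_N, 0)$. The unconstrained one-phase minima equal $2 f_N(x) = 2(1-\alpha)$ each, so $H^-_x(u_N^-) \geq 2(1-\alpha)$. For the positive side, having $u_N^+(x,\cdot)$ positive at some point below $\alpha/8$ and also positive near $y=1$ (where $u_N(x,1) = 1-\alpha > 0$) should force the positive set of $u_N^+(x,\cdot)$ to have measure $A_+ \geq 1 - \alpha/8$. Cauchy--Schwarz (applied to $\int \partial_y u_N^+$ over $\{u_N^+>0\}$) then gives
\[
H^+_x(u_N^+) \geq \frac{(1-\alpha)^2}{A_+} + A_+ \geq \frac{(1-\alpha)^2}{1-\alpha/8} + (1-\alpha/8) = 2(1-\alpha) + \frac{(7\alpha/8)^2}{1-\alpha/8},
\]
and since $H_x(v_N) = 4(1-\alpha)$, I obtain the per-slice excess $H_x(u_N) - H_x(v_N) \geq c(\alpha) := (7\alpha/8)^2/(1-\alpha/8) > 0$, a fixed positive constant. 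Integrating over the $x$-interval from the Lipschitz step (length $m/L$) and invoking Lemma~\ref{L:slicedQ} on the corresponding rectangle $Q$ gives $c(\alpha) m/L \leq S_Q(u_N) - S_Q(v_N) \leq 16/N$, i.e.\ $m \leq 16 L / (c(\alpha) N)$. To close, I would use the fact that $C$ also contains a point of $R_+$ at which $u_N \geq 1/8$: a Harnack-chain argument in the connected positive component $C$ (whose boundary has the NTA/$C^{1,\beta}$ regularity standard for Alt-Caffarelli-Friedman minimizers) then yields $m = u_N(x_0,y_0) \geq c_0$ for some $c_0 > 0$ independent of $N$, contradicting the upper bound on $m$ once $N$ is large.

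The main obstacle is justifying the key inequality $A_+ \geq 1-\alpha/8$ on each slice in the Lipschitz interval. Two-dimensional connectedness of $C$ does not by itself force the slice $C \cap \{x = x_0'\}$ to be an interval from $y_0$ up to $1$: a priori, $C$ could split on the slice into a small piece near $(x_0',y_0)$ and separate pieces reaching $y=1$ at other $x$-coordinates. Ruling this out likely requires either (i) $C^{1,\beta}$-regularity of the free boundary to exclude thin necks, or (ii) a bump/tall dichotomy in which the bump case produces an excess of order $m^2$ per slice and $m^3/L^2$ after accumulation, which can then also be contradicted via the Harnack-type lower bound on $m$ for $N$ large.
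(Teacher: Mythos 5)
Your overall strategy (a per-slice energy excess, integrated over an interval of slices and played against Lemma \ref{L:slicedQ}) is the paper's, but both of the steps you flag as delicate are genuine gaps, and neither of your proposed repairs closes them. The inequality $A_+\geq 1-\alpha/8$ does not follow from $u_N(x,\cdot)$ being positive at a point below $\alpha/8$ and again near $y=1$: the positive set of a slice is in general a union of intervals whose \emph{measure} is not controlled by its diameter, and neither local $C^{1,\beta}$ regularity of the free boundary nor connectedness of the two-dimensional component $C$ prevents a vertical line from meeting $C$ in several short intervals. More importantly, the true per-slice excess degenerates with the value of $u_N$: as the paper's Claim \ref{cl:energybound} shows, when the zero set of the slice has a piece above the point in question, the excess is only of order $\beta$, the value of $u_N$ there, not a fixed $c(\alpha)$. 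So from $u_N(x_0,y_0)=m$ alone you can only extract an excess $\sim m$ on an interval of length $\sim m/L$, i.e.\ $m^2\lesssim 1/N$, which is not yet a contradiction.

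The closing Harnack-chain step is false as stated: $(x_0,y_0)$ is an arbitrary point of $\{u_N>0\}$ with $y_0\leq \alpha/8$ and may lie arbitrarily close to the free boundary, where the Lipschitz bound forces $m=u_N(x_0,y_0)$ to be as small as one likes; a Harnack chain inside $C$ starting from such a point has length growing like $\log\bigl(1/\mathrm{dist}((x_0,y_0),\partial C)\bigr)$ and cannot produce a lower bound $m\geq c_0$ independent of that distance. The missing ingredient --- and the paper's actual mechanism --- is the \emph{non-degeneracy} of minimizers combined with the interior corkscrew property of $\{u_N>0\}$, both uniform in $N$: one first locates a free boundary point $(x_0,\tilde y)$ below $(x_0,y_0)$ (using Lemma \ref{l:nogostripone} to keep it above $-1+1/44$), then a corkscrew point within distance $r_0\leq\alpha/8$ of it, at distance at least $r_0/M$ from $\{u_N\leq 0\}$, where non-degeneracy gives $u_N\geq r_0/C$ with $r_0,M,C$ independent of $N$. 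Claim \ref{cl:energybound} then yields a per-slice excess $\eta(\alpha,r_0/C)>0$ over an interval of slices of length bounded below independently of $N$, and this contradicts \eqref{slicedQ}. Without shifting attention from the given point to such a nondegenerate nearby point, the argument cannot handle points where $u_N$ is small, which is exactly what the lemma must rule out.
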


\begin{proof}
Assuming by contradiction this were not the case, without loss of generality there would exist a point in $(x_0, y_0)$ with $|x_0| < N-1, y_0 \leq \alpha/8$ and $u_N(x_0, y_0) > 0$. 

By continuity of $u_N$, the connected component of $\{u_N > 0\}$ containing $(x_0, y_0)$ must be seperated from $\{y = -1\}$ by $\{u_N = 0\}$. This implies that there is a connected subset of $\{(x,y)\mid |x| \leq N, u_N(x,y) = 0\}$ which touches the sets $\{x = N\}$ and $\{x= -N\}$ and which separates $(x_0, y_0)$ from $\{y =-1\}$. By Lemma \ref{l:nogostripone} this connected component cannot intersect the set $\{(x,y) \mid |x| < N, -1+1/44 < y < -1+1/88\}$. If $y_0 < -1+1/44$ then this connected component lies below $y = -1+1/88$ and the length of its projection onto the $x$-axis is at least $2N$. Such a configuration has too much energy by Lemma \ref{ifusmallenergybig} and thus we can assume $y_0 > -1+1/44$.

We can also assume that $x_0 \notin X_0$ (since $\{u_N > 0\}$ is open), so by continuity of $u_N$ 
on the slice $\{x = x_0\}$ and Lemma \ref{l:nogostripone} 
there exists a point $(x_0, \tilde{y})$ with $-1+1/44<\tilde{y} < y_0$ and $u_N(x_0, \tilde{y}) = 0$ and $(x_0, \tilde{y}) \in \partial \{u_N > 0\}$.

We note that $\{u_N > 0\}\cap ([-N, N]\times [-1+1/88, 1-1/88])$ is a locally NTA domain, uniformly in $N$ (i.e. for any $K \subset \subset ([-N, N]\times [-1+1/88, 1-1/88])$, $\{u_N >0\}$ satisfies the corkscrew conditions at $Q\in \partial \{u_N > 0\}\cap K$ with constants and at scales that depend only on $K$ not $N$ c.f. \cite[Theorem 2.3]{DaEnTo19})). In particular, there exists a point $(x_1, y_1) \in \{u_N > 0\}$ such that $\|(x_1, y_1) - (x_0, \tilde{y})\| \leq r_0 = r_0(K) \leq \alpha/8$ and $\mathrm{dist}((x_1, y_1), \{u_N \leq 0\}) \geq r_0/M$ for some $M > 1$ (where both $r_0, M$ are independent of $N$ large).

 By the non-degeneracy of $u_N$ (c.f. \cite[Lemma 3.4]{AlCa81}) and the Lipschitz continuity of $u_N$, Theorem \ref{t:uniformlip}, there exists a constant $C > 1$ (again uniform for large $N$) such that $u_N \geq r_0/C$ in the ball $B((x_1, y_1), r_0/(3CL))$ (where $L$ is the Lipschitz constant).

Therefore, there exists an interval $I \subset [-N, N]$ of length $2r_0/(3CL)$ such that for each $x\in I$ there exists a $y < \alpha/2$ with $u_N(x, y) > r_0/C$. Invoking the computations of Section~\ref{SS:X2} 
(c.f. Claim \ref{cl:energybound}), we get that $H_x(u_N(x,-)) \geq H_x(v_N(x, -)) + \eta$ for every $x\in I \backslash X_0$, where $\eta = \eta(\alpha, C, r_0) > 0$ is independent of $N$. 

Let $Q = I \times [-1,1]$; 
we get (from \eqref{slicedQ}) $$S_Q(v_N) + \frac{16}{N} \geq S_Q(u_N) \geq S_Q(v_N) + \mathcal H^1(I)\eta.$$ This gives a contradiction if $N > 0$ is large enough (since $|I| = 2r_0/(3CL), \eta > 0$ are independent of $N$). 
\end{proof}

So we have a good control on where $\{u_N = 0\}$ is in the central region.
Before we end this section, it behooves us to refine the result of Lemma \ref{ifusmallenergybig}, with the goal of showing that when $f_N(x) \geq 1$, we can actually confine $\{u_N = 0\}$ to an arbitrarily thin strip around the line $\{y=0\}$.
This will be used in the next section to show that there are no one-phase points
on the sides of $R_N$. We first estimate the difference between $v_N(x,-)$ and near minimizers for $H_x$.

\begin{lemma}\label{error} 
Let $x\in (-3N, 3N)$ be 
such that $f_N(x) \geq 1$ and let $\varepsilon \in (0,1)$. Let $w\in W^{1,2}([-1,1])$, with $w(\pm 1) = \pm f_N(x)$ and assume $H_x(w)\leq H_x(v_N(x,-)) + \varepsilon$. Then, there exists a $C > 0$ (uniform over the choice of $x, \varepsilon$ above), such that
\[
||w-v_N||_{L^{\infty}([-1,1])}\le C\varepsilon^{1/2}.
\]
\end{lemma}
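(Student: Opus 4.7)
The plan is to set $\phi := w - v_N$ and estimate $\|\phi\|_{L^\infty([-1,1])}$ directly. Since $w(\pm 1) = \pm f_N(x) = v_N(\pm 1)$, $\phi$ lies in $W^{1,2}_0([-1,1])$. Because $f_N(x) \geq 1$ we are in Case 1 of Lemma \ref{slicedmin}, so $v_N(y) = y f_N(x)$, $v_N' \equiv f_N(x)$, and $H_x(v_N) = 2f_N(x)^2 + 2$. Expanding $(w')^2 = (v_N' + \phi')^2$ and using $\int_{-1}^1 \phi' \,dy = \phi(1) - \phi(-1) = 0$ to kill the cross term gives $\int_{-1}^1 (w')^2\,dy = 2f_N(x)^2 + \int_{-1}^1 (\phi')^2\,dy$.

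Let $Z := \{w = 0\}\cap[-1,1]$. On $Z$ the Sobolev function $w$ is a.e.\ constant, hence $w' = 0$ there and $\phi' = -f_N(x)$ a.e.\ on $Z$. Writing $|\{w \neq 0\}| = 2 - |Z|$ and inserting the identity above into $H_x(w)$, then subtracting $H_x(v_N) = 2f_N(x)^2 + 2$, I obtain the key identity
\[
H_x(w) - H_x(v_N) = \int_{Z^c} (\phi')^2\,dy \;+\; \bigl(f_N(x)^2 - 1\bigr)\,|Z|.
\]
Since $f_N(x) \geq 1$, both terms on the right are non-negative, so the assumption $H_x(w) \leq H_x(v_N) + \varepsilon$ yields $\int_{Z^c}(\phi')^2\,dy \leq \varepsilon$.

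Next I bound $|Z|$ by a separate argument. Cauchy--Schwarz applied to $w'$ on $Z^c$ (using $\int_{Z^c} w'\,dy = w(1) - w(-1) = 2f_N(x)$ since $w'=0$ on $Z$) gives $\int_{-1}^1(w')^2 \geq 4f_N(x)^2/(2-|Z|)$. Plugging this into $H_x(w) \leq 2f_N(x)^2 + 2 + \varepsilon$, rearranging, and using $f_N(x) \geq 1$ to discard a non-negative term leads to $|Z|^2 / 2 \leq \varepsilon$, hence $|Z| \leq \sqrt{2\varepsilon}$.

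Finally, I conclude with the fundamental theorem of calculus, split over $Z$ and $Z^c$: for $y \in [-1,1]$, using $\phi(-1) = 0$,
\[
\phi(y) = \int_{[-1,y]\cap Z^c} \phi'\,dt \;-\; f_N(x)\,\bigl|[-1,y]\cap Z\bigr|.
\]
Cauchy--Schwarz bounds the first integral by $\sqrt{|[-1,y]\cap Z^c|}\cdot\bigl(\int_{Z^c}(\phi')^2\bigr)^{1/2} \leq \sqrt{2\varepsilon}$, and the second term is at most $f_N(x)\,|Z| \leq 2\sqrt{2\varepsilon}$ since $f_N(x) \leq 2$ by construction. Adding these gives $\|w-v_N\|_{L^\infty} \leq 3\sqrt{2}\,\varepsilon^{1/2}$, with a universal constant $C = 3\sqrt{2}$. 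The main obstacle is to obtain the sharp exponent $\varepsilon^{1/2}$ rather than $\varepsilon^{1/4}$: a naive Sobolev bound $\|\phi\|_\infty \lesssim \|\phi'\|_{L^2([-1,1])}$ would pick up the full contribution $\int_Z(\phi')^2 = f_N(x)^2|Z| = O(\sqrt{\varepsilon})$ from $Z$, producing only $\|\phi\|_\infty = O(\varepsilon^{1/4})$; splitting the fundamental theorem of calculus over $Z$ and $Z^c$ estimates the $Z$-contribution linearly in $|Z|$ and recovers the correct exponent.
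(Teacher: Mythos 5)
Your proof is correct, and it takes a genuinely different route from the paper's. The paper replaces $w$ by the canonical competitor $\hat w$ that vanishes exactly on $[a,b]$ (with $a,b$ the first and last zeros of $w$) and is linear outside; it then shows the energy gap $H_x(\hat w)-H_x(v_N)$ dominates $F(a,b)\gtrsim a^2+b^2$ via a Hessian/Taylor computation, which controls $\|v_N-\hat w\|_\infty$, and finally controls $\|w-\hat w\|_\infty$ by Jensen using the remaining gap $H_x(w)-H_x(\hat w)$. You instead exploit the linearity of $v_N$ to get an exact orthogonal decomposition of the Dirichlet energy, combine it with the standard fact that $w'=0$ a.e.\ on $Z=\{w=0\}$ to obtain the clean identity $H_x(w)-H_x(v_N)=\int_{Z^c}(\phi')^2+(f_N(x)^2-1)|Z|$, and supply the missing bound $|Z|\lesssim\sqrt{\varepsilon}$ (needed when $f_N(x)=1$) by a separate Cauchy--Schwarz argument; the FTC split over $Z$ and $Z^c$ then gives the sharp exponent with an explicit constant. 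Your route avoids the intermediate competitor and the Hessian computation entirely, works for an arbitrary measurable zero set without reducing to its convex hull, and your closing remark correctly identifies why the naive bound $\|\phi\|_\infty\lesssim\|\phi'\|_{L^2}$ would only yield $\varepsilon^{1/4}$; the only thing the paper's approach ``buys'' in exchange is that its two-step comparison ($w$ versus $\hat w$ versus $v_N$) is the template reused elsewhere in the appendix (Section \ref{SS:X2}).
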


\begin{proof} 
Recall that for $x$ as in the statement of Lemma \ref{error}, we have $v_N(x,y) = yf_N(x)$. Our plan is to first modify $w$, reducing energy, and show the desired inequality for $w$, then estimate the $L^\infty$ distance between the original $w$ and our modified functional. We know that $w\in W^{1,2}([-1,1])$ so by Sobolev embedding we have that $w$ is H\"older continuous and thus $$-1 < \alpha := \inf\{t\mid w(t) =0\} \leq \sup\{t\mid w(t) = 0\} =: \beta < 1.$$ We construct $$\hat{w}(t) = \begin{cases} 0& t\in [a,b]\\ \frac{f_N(x)(t-a)}{1+a}& t\in [-1, a]\\
\frac{f_N(x)(t-b)}{1-b}& t\in [b, 1].\end{cases}$$ We observe that $H_x(w) \geq H_x(\hat{w})$ (as we have enlarged the zero set and minimized Dirichlet energy where $w$ is positive).

We can compute that $$H_x(\hat{w}) = f_N^2(x)\left(\frac{1}{1-b} + \frac{1}{a+1}\right) + 2-(b-a).$$ Recall that $H_x(v_N(x, -)) = 2f_N^2(x) + 2$ and rewrite $$\begin{aligned} H_x(\hat{w}) - H_x(v_N(x,-)) =& f_N^2(x) \left(\frac{1}{1-b} + \frac{1}{a+1} -2\right) -(b-a)\\ \geq& \left(\frac{1}{1-b} + \frac{1}{a+1} -2\right) -(b-a) =: F(a,b),\end{aligned}$$ where the last inequality follows because $f_N(x) \geq 1$ in the salient range and $\frac{1}{1-b} + \frac{1}{a+1} - 2 \geq 0$ as long as $1 > b \geq a >-1$.

We compute that $F(0,0) = 0, \nabla F(0,0) = (0,0)$ and that $$\nabla^2 F(a_0,b_0) = \begin{pmatrix}
\frac{2}{(1+a_0)^3} & 0\\
0 & \frac{2}{(1-b)^3}
\end{pmatrix},$$ is a diagonal matrix with entries between $10^7$ and $\frac{1}{10^7}$ as long as $[a_0,b_0] \in \left[-\frac{99}{100}, \frac{99}{100}\right]$. If either of $a_0, b_0$ is outside that range, then Lemma  \ref{ifusmallenergybig} gives a contradiction to the assumption on energy. Thus, by the Taylor remainder theorem (and the fact that $F$ is $C^2$ as long as $a$ stays away $-1$ and $b$ stays away from $1$) we have that $$|F(a,b) - F(0,0)| = (a,b)^T\nabla^2 F(a_0, b_0)(a,b) \leq C(a^2 + b^2)$$ for some $(a_0, b_0)$ on the segment connecting $(0,0)$ and $(a,b)$. 

On the other hand, since $v_N(x,-)$ is linear in $y$ and $\hat{w}$ is piecewise linear in $y$ we can see that $$\|v_N - \hat{w}\|_{L^\infty} = \max_{t = a,b} |v_N(x, t) - \hat{w}(t)| =f_N(x)\max_{t = a,b} |t|.$$ Chaining everything together we get that $$\|v_N - \hat{w}\|_{L^\infty} \leq C\sqrt{H_x(\hat{w}) - H_x(v_N(x,-))}.$$

We now estimate $|\hat{w}-w|(t_0)$ for $t_0 \in [-1,1]$. We have two cases; in the first, assume that $t_0 \in [a,b]$. Then $2|\hat{w}(t_0) - w(t_0)| = 2|w(t_0)| \leq \int_a^{t_0}|w_y| + \int_{t_0}^b |w_y|$. Using Jensen's inequality we get that $$|w(t_0)| \leq \left(\frac{b-a}{4}\int_a^b |w_y|^2\, dy\right)^{1/2} \leq C\sqrt{H_x(w) - H_x(\hat{w})}.$$ Putting this together we have that $$|v_N(x, t_0) - w(t_0)| \leq C\sqrt{H_x(w) - H_x(\hat{w})} + C\sqrt{H_x(\hat{w}) - H_x(v_N(x,-))} \leq C\varepsilon^{1/2}.$$

In the second case we assume that $t_0 \in [-1, a]$ (the case that $t_0 \in [b, 1]$ works the same way). Since $w(-1) = \hat{w}(-1)$ and $w(a) = \hat{w}(a)$ we get that $2|w(t_0) - \hat{w}(t_0)| \leq \int_{-1}^a |\partial_y(w- \hat{w})|\, dy$. Again applying Jensen's inequality we get that $$|w(t_0) - \hat{w}(t_0)|  \leq C(a+1) \left(\int_{-1}^a \left|\partial_y w - \frac{f_N}{1+a}\right|^2\, dy\right)^{1/2}.$$ Expanding out the integrand, and using the fact thatat $w(a) - w(-1) = f_N(x)$ we get $$|w(t_0) - \hat{w}(t_0)|  \leq C\left(\int_{-1}^a|\partial_y w|^2\, dy -2\frac{f_N(x)^2}{1+a} + \frac{f_N(x)^2}{1+a}\right)^{1/2} \leq C\left(H_x(w) - H_x(\hat{w})\right)^{1/2}.$$ We can chain the inequalities together as above to get the desired result. 
\end{proof}

From here, we have an easy corollary: outside of the central box, the zero set of $u_N$ is contained in a very thin strip around $\{y =0\}$.

\begin{corollary}\label{c:reifenbergflat}
Let $\delta, \theta > 0$. There exists an $N_0 = N_0(\delta, \theta) > 0$ such that
for $N > N_0$ and every pair $(x,y)\in R_N$ such that 
$|x| < 3N - \delta$, $|y| \leq 1-\frac{1}{88}$, $f_N(x) \geq 1$, and $u_N(x,y) = 0$,
we have $|y| < \theta$.
\end{corollary}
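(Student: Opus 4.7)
The plan is to argue by contradiction, combining the slice-by-slice $L^\infty$ stability of Lemma~\ref{error}, the uniform Lipschitz bound of Theorem~\ref{t:uniformlip}, and the energy comparison in Lemma~\ref{L:slicedQ}. The guiding idea is that whenever $f_N(x) \geq 1$, the slice minimizer is the linear function $v_N(x,y) = y\, f_N(x)$, which vanishes only at $y=0$; so having $u_N(x,y_0)=0$ with $|y_0|\geq \theta$ forces an $L^\infty$-gap of at least $\theta$ between $u_N(x,\cdot)$ and $v_N(x,\cdot)$ on that slice, which by Lemma~\ref{error} costs a definite amount of slice energy.

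Suppose towards a contradiction that $(x_0,y_0)$ satisfies the hypotheses with $|y_0|\geq \theta$. First I would invoke Theorem~\ref{t:uniformlip} (with parameter $\min(\delta/2,1/100)$) to obtain a Lipschitz constant $L$ and a threshold $N_1$, both independent of $N$, such that $u_N$ is $L$-Lipschitz on $[-3N+\delta/2,\,3N-\delta/2]\times[-1+1/100,\,1-1/100]$ for $N\geq N_1$. Setting $r := \theta/(4L)$, I would then find an interval $I$ of length $r$ inside $[x_0-r,x_0+r]$ along which simultaneously $|x|\leq 3N-\delta/2$ and $f_N(x)\geq 1$, so that Lemma~\ref{error} applies on each slice above $I$.

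For each $x\in I$, the Lipschitz estimate gives $|u_N(x,y_0)|\leq Lr=\theta/4$, while $|v_N(x,y_0)|=|y_0|\,f_N(x)\geq \theta$; thus the $L^\infty$-gap between $u_N(x,\cdot)$ and $v_N(x,\cdot)$ is at least $3\theta/4$. The contrapositive of Lemma~\ref{error} (applied with $\varepsilon=9\theta^2/(16C^2)$) then forces
\[
H_x(u_N(x,\cdot)) - H_x(v_N(x,\cdot)) \;\geq\; \frac{9\theta^2}{16C^2}
\qquad \text{for a.e.\ } x\in I.
\]
Integrating over $I$ and writing $Q=I\times[-1,1]$ yields $S_Q(u_N)-S_Q(v_N)\gtrsim \theta^3/L$. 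But Lemma~\ref{L:slicedQ} bounds this difference by $16/N$, producing a contradiction as soon as $N$ is taken larger than a constant depending on $\theta$, $\delta$, $L$, and $C$.

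The mildly delicate step is the ``sided'' choice of $I$: if $x_0$ lies essentially on the threshold $|x|=(1+2\alpha)N/(1+\alpha)$ where $f_N$ drops below $1$, then $I$ must be chosen on the outward side, while if $x_0$ is close to $\pm(3N-\delta)$, $I$ must be pushed inward to stay within the Lipschitz region. Both requirements are simultaneously satisfiable once $N$ is large compared with $r$ and $\delta$, because the two thresholds are separated by a distance of order $N$; this is the only place where the final $N_0$ must depend on $\delta$ in addition to $\theta$, $L$, and $C$.
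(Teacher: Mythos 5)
Your argument is correct and is essentially the paper's own proof: contradiction, the uniform Lipschitz bound to spread the vanishing of $u_N$ over an interval $I$ of length comparable to $\theta/L$ on which $f_N\geq 1$, Lemma \ref{error} to convert the resulting $L^\infty$-gap with $v_N(x,y)=yf_N(x)$ into a per-slice energy excess, and integration over $I$ to contradict Lemma \ref{L:slicedQ} for $N$ large. The only difference is cosmetic (slightly different constants, and a more explicit discussion of choosing $I$ on the correct side of the thresholds, which the paper leaves implicit).
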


\begin{proof}
Fix $\delta > 0$ and let $N > 0$ be big enough so that, invoking Theorem \ref{t:uniformlip}, 
we can say $u_N$ is $L$-Lipschitz in $\left[-3N + \delta, 3N - \delta\right]\times \left[-1+\frac{1}{44}, 1-\frac{1}{44}\right]$.

Assume there exists a point $(x_0, y_0)$ such that $f_N(x_0) \geq 1$, 
$\theta \leq |y_0| \leq 1-\frac{1}{88}$, and
$u_N(x_0, y_0) = 0$. We know that $|y_0| < 1-\frac{1}{44}$, by 
Lemma \ref{l:nogostripone}.
Then there exists an interval $I$ of length at least $\frac{\theta}{2L}$ such that $|u_N(x, y_0) - y_0f_N(x)| \geq \frac{\theta}{2}$ and $f_N(x) \geq 1$ on all $x\in I$. 

By Lemma \ref{error}, this implies that $H_x(u_N(x,-)) \geq H_x(v_N(x,-)) + \frac{\theta^4}{4C}$ for almost every $x\in I$. Integrating and letting $Q = I\times [-1,1]$ we get that $S_Q(u_N) \geq S_Q(v_N) + C^{-1}\theta^5$ where $C= C(\delta) > 0$ is independent of $N, \theta$. This contradicts \eqref{slicedQ} as long as $N$ is large enough (depending on $C, \theta$ and thus on $\delta, \theta$). 
\end{proof}

\section{The Proof of Theorem \ref{t:main}: Ruling out One-Phase Points}\label{ss:onephaseno}

The main goal of this section is to finish up the proof of our main Theorem \ref{t:main}, 
that $\Gamma_{\mathrm{BP}}(u_N) \neq \emptyset$. In fact, we have the following
more precise description.

\begin{theorem}\label{t:revisedmain}
Let $u_N, R_N, f_N$ be as above. Then, for $N$ large enough, there exists a ``pool of zeroes", 
i.e., a connected open set $\mathcal O \subset \{u_N =0\}\cap \{|x| < 2N+1\}
\cap \{ |y| \leq 1-\frac{1}{44}\}$ such that
$|\mathcal O| > 0$, $\partial {\mathcal O}$ is contained in the free boundary 
$\Gamma^+ \cup \Gamma^-$,
and $\partial {\mathcal O}$ meets $\Gamma^+$,
$\Gamma^-$, and the set, $\Gamma_{\mathrm{TP}}(u)$, of branch points. 
\end{theorem}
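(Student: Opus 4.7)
My plan is to let $\mathcal{O}$ be the connected component of the interior of $\{u_N = 0\}$ containing the origin, and verify in sequence: $|\mathcal{O}| > 0$, $\partial\mathcal{O} \subset \Gamma^+\cup\Gamma^-$, $\overline{\mathcal{O}}\subset\{|x|<2N+1\}\cap\{|y|\le 1 - \frac{1}{44}\}$, and the fact that $\partial\mathcal{O}$ meets each of $\Gamma^+$, $\Gamma^-$, and $\Gamma_{\mathrm{BP}}$.

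The first two claims and the vertical half of compact containment are essentially direct. Lemma \ref{l:doesntdiplow} forbids $u_N$ from being positive (which would force $y > \alpha/8$) or negative (which would force $y < -\alpha/8$) on the open rectangle $U := \{|x| < N-1,\,|y| < \alpha/8\}$, so $u_N \equiv 0$ on $U$; since $U$ is connected and contains $(0,0)$, $U \subset \mathcal{O}$ and $|\mathcal{O}| \geq (N-1)\alpha/2$. The inclusion $\partial\mathcal{O} \subset \Gamma^+ \cup \Gamma^-$ is general point-set topology: $u_N \equiv 0$ on $\overline{\mathcal{O}}$ by continuity, and were any neighborhood of $p \in \partial\mathcal{O}$ entirely contained in $\{u_N = 0\}$, $p$ would be interior to the zero set and, by connectedness, in $\mathcal{O}$ itself, contradicting $p \in \partial\mathcal{O}$. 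The vertical bound follows from Lemma \ref{l:nogostripone}: the strips $R_\pm$ satisfy $|u_N| \geq 1/8$ and are thus disjoint from $\overline{\mathcal{O}}$, and any path in $\mathcal{O}$ reaching $\{|y| > 1 - 1/88\}$ would have to cross $R_\pm$.

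The horizontal bound $\overline{\mathcal{O}} \subset \{|x| < 2N+1\}$ is the main technical step and the primary anticipated obstacle. My plan is to first sharpen Lemma \ref{error}: for $x$ with $f_N(x) \geq 1 + \delta$ and any admissible $w$ with $w(\pm 1) = \pm f_N(x)$ and $|\{w=0\}| \geq \beta$, replacing $w$ by its Dirichlet minimizer on $[-1,1] \setminus \{w = 0\}$ (which only decreases $H_x$) and then explicitly optimizing over the length and position of the zero set yields
\begin{equation*}
H_x(w) \geq H_x(v_N(x,\cdot)) + c\,\delta\,\beta
\end{equation*}
for a universal $c > 0$; this uses that $v_N(x,\cdot) = y f_N(x)$ is the unique slice minimizer (no zero interval) when $f_N(x) > 1$, and that the explicit centered-zero competitor has first derivative $\gtrsim \delta$ at $\beta = 0$. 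Combining this with Lemma \ref{L:slicedQ} ($S(u_N) \le S(v_N)+16/N$) and Fubini, I obtain an area bound $|\{u_N=0\}\cap\{f_N(x)\ge 1+\delta\}| \le C/(\delta N)$, and, applied to any ball $B(p,r) \subset \mathcal{O}$ in the outer region (which contributes $\gtrsim \delta r^2$ to the integrated excess slice energy), the radius estimate $r \le C/\sqrt{\delta N}$. Assuming for contradiction that $\mathcal{O}$ contains a point with $|x| \geq 2N+1$, openness plus connectedness of $\mathcal{O} \subset \mathbb{R}^2$ force its $x$-projection to cover $[0, 2N+1]$ and each slice to be a non-empty relatively open subset of the vertical interval. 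Choosing $\delta = \alpha/2$ makes the portion of the projection inside $\{f_N \ge 1+\delta\}$ have length $\Omega(N)$; combining with Corollary \ref{c:reifenbergflat} (with $\theta$ a small absolute constant) confining $\mathcal{O}$ in that region to a thin horizontal strip, and using the ball-radius bound to control the 1D slice widths along the projection, a covering argument then forces the 2D area of $\mathcal{O}$ in that outer subinterval to exceed $C/(\delta N)$ for $N$ large. Making this covering step sharp enough (i.e., ensuring the ball-radius bound on maximum slice widths combines with the positivity of every slice width to yield an $\Omega(1)$ area lower bound along a length $\Omega(N)$ span) is the anticipated main difficulty of the entire theorem.

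Once compact containment holds, part (d) is a standard planar-topology conclusion. The compact set $\partial\mathcal{O}$ splits naturally into an upper arc in $\{y > 0\}$ and a lower arc in $\{y < 0\}$, meeting at the two extremal-$x$ points of $\partial\mathcal{O}$. Immediately above the upper arc $u_N > 0$, because the region connecting $\partial\mathcal{O}$ upward to $\{y = 1\}$ (where $u_N = f_N > 0$) is free of further zeros by Lemma \ref{l:nogostripone}, so the upper arc lies in $\Gamma^+ \cap \Gamma_{\mathrm{OP}}$; symmetrically, the lower arc lies in $\Gamma^- \cap \Gamma_{\mathrm{OP}}$, giving $\partial\mathcal{O} \cap \Gamma^\pm \neq \emptyset$. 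At the extremal-$x$ meeting points one has $u_N > 0$ in every neighborhood (approaching along the upper arc) and $u_N < 0$ in every neighborhood (approaching along the lower arc), placing these points in $\Gamma^+ \cap \Gamma^- = \Gamma_{\mathrm{TP}}$; and since they are limits of the one-phase points on either arc, they belong to $\overline{\Gamma_{\mathrm{OP}}}$, hence to $\Gamma_{\mathrm{BP}} = \Gamma_{\mathrm{TP}} \cap \overline{\Gamma_{\mathrm{OP}}}$.
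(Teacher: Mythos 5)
The main gap is in your horizontal containment step, and it is not a technical difficulty that a sharper covering argument can fix: the energy/area approach is structurally incapable of ruling out leakage of $\mathcal O$ into the outer region. Your sharpened version of Lemma \ref{error} does yield the area bound $|\{u_N=0\}\cap\{f_N\ge 1+\delta\}|\le C/(\delta N)$ (the appendix computation gives $G(a,b)-G(0,0)\ge (f_N^2-1)(b-a)$, so this part is fine). But the contradiction you then need — a \emph{lower} bound of order $\gg 1/(\delta N)$ on the area of $\mathcal O$ in the outer region — does not follow from openness and connectedness. A connected open set whose projection covers an interval of length $\Omega(N)$ can have arbitrarily small area (a thin tongue, e.g.\ $\{0<y<\epsilon e^{-x}\}$): each slice is a nonempty open interval, but there is no lower bound on its width, and your inscribed-ball estimate $r\le C/\sqrt{\delta N}$ is an upper bound, so it cannot help. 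Indeed the whole phenomenon being studied is that the zero set can pinch to zero thickness, so no pure measure-theoretic argument can exclude such a tongue. This is exactly why the paper proves the much stronger Lemma \ref{l:noonephase} (every free boundary point in $R_{ext}$ is a \emph{two-phase} point), whose proof is the real content of Section 4 and requires the $C^{1,1/2}$ regularity of \cite{DeSpVe21}, uniform NTA constants, Dahlberg--Jerison--Kenig $A_\infty$ estimates for harmonic measure, and Bourgain's lemma. Given that lemma, a tongue of $\mathcal O$ reaching $R_{ext}$ would have a boundary point $P$ on a vertical segment inside $\mathcal O$; since $P$ is a two-phase point with locally graphical free boundary, non-degeneracy forces $u$ to be strictly positive on one side and strictly negative on the other along that vertical line, contradicting that the segment lies in $\{u=0\}$.

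A secondary issue: your final step presumes that $\partial\mathcal O$ decomposes into an upper and a lower arc meeting at two extremal-$x$ points, and that $u_N>0$ just above the upper arc because the region up to $\{y=1\}$ is ``free of further zeros by Lemma \ref{l:nogostripone}.'' Neither assertion is justified a priori: $\partial\mathcal O$ is only known to be a compact subset of the free boundary, and Lemma \ref{l:nogostripone} controls $u_N$ only on the thin strips $R_\pm$, not in between. The paper instead argues by contradiction: if $\partial\mathcal O$ contained no branch point, every boundary point would be a one-phase point, the free boundary would be locally a smooth curve there, the component $\Gamma$ of $\partial\mathcal O$ through a given point would be a Jordan curve, and an energy-comparison argument (replacing $u$ by $0$ on a component of $\R^2\setminus\Gamma$) forces $\mathcal O$ to be a full complementary component with $\partial\mathcal O=\Gamma\subset\Gamma^+$ (say), contradicting $\partial\mathcal O\cap\Gamma^-\neq\emptyset$. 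Your arguments for $|\mathcal O|>0$, $\partial\mathcal O\subset\Gamma^+\cup\Gamma^-$, the vertical containment, and $\partial\mathcal O\cap\Gamma^\pm\neq\emptyset$ are essentially the paper's and are fine.
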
 

The idea is that Lemma \ref{l:doesntdiplow} guarantees the existence of a ``pool of zeroes" separating the positive and negative phases in the 
 central part of $R_N$. We then want to show that this pool does not ``leak" to the sides of $R_N$.
For this we need the following lemma, whose proof will be the main goal of this section.

\begin{lemma}\label{l:noonephase}
Let 
\[
R_{ext} = \big\{ (x,y) \in R_N \, \mid \, 2N +1< |x| < 3N-1 \text{ and } |y| \leq 1-\frac{1}{88}\big\}.
\]
There exists a $N_0 > 1$ such that if $N \geq N_0$, 
then every free boundary point for $u_N$ in $R_{ext}$
is a two-phase point. Or, put another way: 
$$
R_{ext} \cap (\Gamma^+(u_N)\cup \Gamma^-(u_N))  \subset \Gamma_{\mathrm{TP}}(u).
$$
\end{lemma}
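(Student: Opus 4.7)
The plan is to argue by contradiction. Suppose some point $(x_0, y_0) \in (\Gamma^+(u_N) \setminus \Gamma^-(u_N)) \cap R_{ext}$ (the symmetric case $\Gamma^-(u_N) \setminus \Gamma^+(u_N)$ follows from the reflection $(x,y,u) \mapsto (x,-y,-u)$). By the definition of a one-phase point, $u_N(x_0, y_0) = 0$ and there is some radius $r > 0$ (depending a priori on $(x_0,y_0)$) such that $u_N \geq 0$ on the open ball $B((x_0, y_0), r)$.

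The main step is to apply Corollary \ref{c:reifenbergflat} with $\theta = r/4$, so that every zero of $u_N$ on a slice through $R_{ext}$ lies in $\{|y| < r/4\}$; in particular $|y_0| < r/4$. On the slice $x = x_0$, $u_N$ is continuous with $u_N(x_0, -1) = -f_N(x_0) \leq -2$, and has no zero on $[-1, -r/4]$; hence $u_N(x_0, y) < 0$ throughout $[-1, -r/4]$. On the other hand, the point $(x_0, y_0 - r/2)$ lies in $B((x_0, y_0), r)$ and satisfies $y_0 - r/2 < -r/4$ (using $|y_0| < r/4$), so $u_N(x_0, y_0 - r/2) \geq 0$; this is the desired contradiction on the slice.

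What remains is to justify that the threshold $N_0$ coming from Corollary \ref{c:reifenbergflat} applied with $\theta = r/4$ can be chosen uniformly in the putative one-phase point. For this I would first prove an \emph{a priori} upper bound of the form $r \leq C N^{-1/3}$ via an energy-excess argument. The constraint $u_N \geq 0$ on $B((x_0, y_0), r)$ forces, on each slice $x_1$ with $|x_1 - x_0| < r\sqrt{3}/2$, the restriction $u_N(x_1, \cdot) \geq 0$ on $[y_0 - r/2, y_0 + r/2]$. Since $v_N(x_1, y) = y f_N(x_1)$ is strictly negative on an $\sim r$-long portion of this interval (using $|y_0| < r/4$ and $f_N \geq 2$ on $R_{ext}$), a constrained one-dimensional minimization in the spirit of Lemma \ref{error} (now with the extra constraint $w \geq 0$ on a subinterval) yields a per-slice excess
\[
H_{x_1}(u_N(x_1, \cdot)) \geq H_{x_1}(v_N(x_1, \cdot)) + c\, r^2
\]
for a universal $c > 0$. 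Integrating over the $\sim r$-long interval of slices and invoking Lemma \ref{L:slicedQ} gives $c\sqrt{3}\, r^3 \leq 16/N$, and hence $r \leq C N^{-1/3}$.

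The principal obstacle is therefore a delicate balancing of quantitative exponents: the universal upper bound $r \leq C N^{-1/3}$ just obtained must be compatible with the lower bound on $\theta$ needed to invoke Corollary \ref{c:reifenbergflat} with $\theta = r/4$, so that a single $N_0$ works for all candidate one-phase points. Pushing this through requires tracking the constants carefully through the proof of Corollary \ref{c:reifenbergflat}, and possibly sharpening the constrained slice-energy excess from $c r^2$ to something stronger (for example by using that, near a one-phase free boundary point, the blow-up of $u_N$ has slope $\lambda_+ = 1$ while $v_N$ has slope $f_N \geq 2$, forcing additional Dirichlet excess not offset by area savings).
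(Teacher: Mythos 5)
Your argument has a genuine gap, and it is precisely the one you flag at the end: the two quantitative requirements you need are incompatible, not merely delicate to balance. To run the slice contradiction you must apply Corollary \ref{c:reifenbergflat} with $\theta = r/4$, and its threshold $N_0(\delta,\theta)$ blows up as $\theta\to 0$: in its proof the contradiction requires roughly $16/N < C^{-1}\theta^5$, i.e. $r \gtrsim N^{-1/5}$. Your a priori bound goes the other way, $r \leq C N^{-1/3}$, so for large $N$ no admissible $r$ satisfies both constraints; worse, the derivation of that a priori bound itself uses $|y_0| < r/4$, which is exactly the conclusion of the flatness statement you have not yet licensed at scale $r$. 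The irreducible bad case is a one-phase point whose ``one-phase radius'' $r$ is much smaller than the flatness scale $\theta$ attainable at the given $N$: then $B((x_0,y_0),r)$ may sit entirely inside the strip $\{|y|<\theta\}$, possibly with $y_0 > r/2$, in which case $v_N$ is already nonnegative on the relevant portion of those slices, the positivity constraint produces no slice-energy excess at all, and nothing competes with $16/N$. So the contradiction cannot be extracted from the size of the positivity ball.

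The paper's proof instead runs entirely on the mechanism you mention only in your final parenthesis: at a one-phase point the free boundary condition forces $|\nabla u_N| = 1$, while Lemma \ref{l:closeto2} shows $\partial_y u_N \approx 2$ in $L^2$ of the bulk; this discrepancy is local and does not degenerate with $r$. Concretely, one fixes a square $Q_0$ of unit scale around the putative one-phase point with both interior and boundary $L^2$ control on $\partial_y u_N - 2$ (see \eqref{e:smallinint}), uses Corollary \ref{c:reifenbergflat} with a \emph{fixed} small $\theta$ together with \cite{DeSpVe21} (Lemma \ref{l:smooth}) to make $\tilde Q = Q_0\cap\{u_N>0\}$ a uniform NTA domain on whose free boundary $\partial_y u_N \leq 2 - \frac{1}{20}$ (propagating the value $\leq 1$ at the one-phase point via the uniform $C^{0,1/4}$ bound on $\nabla u_N$), and then represents $\partial_y u_N$ at a well-interior point by harmonic measure. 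Bourgain's lemma gives the free boundary piece a definite proportion of harmonic measure, so the deficit there must be compensated by $\partial_y u_N > 2 + c/2$ on a subset of $\partial Q_0$ carrying definite harmonic measure but, by \eqref{e:smallinint}, tiny $\mathcal H^1$ measure --- contradicting the $A_\infty$ property of harmonic measure on NTA domains. To complete your write-up you would need to replace the positivity-ball energy argument by a quantitative use of the free boundary condition along these (or comparable) lines.
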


Before we prove the lemma, let us see how its proof implies the theorem. 

\begin{proof}[Proof of Theorem \ref{t:revisedmain} assuming Lemma \ref{l:noonephase}]
By Lemma \ref{l:doesntdiplow}, $u_N$ vanishes in the region where $|x| \leq N-1$ and 
$|y| \leq 1/8$. Denote by ${\mathcal O}_0$ the interior of $\{ u_N = 0 \}$, and let
$\mathcal O$ the connected component of $B(0, 1/8)$ in ${\mathcal O}_0$.

Let us first check that $\partial\mathcal O$ contains one-phase points of both types.
Consider the line segment $\ell$ from the origin to $z_+ = (0,1/44)$; we know that 
$u(z_+) \geq 1/8$ (by \eqref{3a2}), so $\ell$ meets $\partial\mathcal O$. At the first point
of intersection $z=(0,y)$ (going up from $0$), Lemma \ref{l:doesntdiplow} says that $y > \alpha/8$,
and then Lemma \ref{l:doesntdiplow} says that $u(w) \geq 0$ for $w$ near $z$; hence $z$
is a (positive) one-phase point. Similarly, the first point of $\partial\mathcal O$ on the interval
from $0$ to $z_- = (0,-1/44)$ is a negative one-phase point.

Next we want to show that ${\mathcal O} \cap R_{ext} = \emptyset$. Suppose not, and 
let $\gamma$ be a path in ${\mathcal O}$ that goes from $0$ to some point of 
$\mathcal O_0\cap R_{ext}$. Certainly $\gamma$ does not get close to the top and bottom boundaries, i.e.
where $|y| = 1-\frac{1}{88}$, by \eqref{3a2}. So there is a point $(x_0,y_0)\in \gamma$
such that $|x_0| > 2N+1$ and $|y_0| \leq 1-1/44$. Then, as above with the origin, we can find a point 
$P = (x_0,y_0')$ above $(x_0,y_0)$, which lies in $\partial\mathcal O$. This vertical segment lies inside of $\mathcal O$ (and is non-empty by the openness of $\mathcal O_0$). By Lemma \ref{l:noonephase},
this point is a two-phase point. But in fact the proof of Lemma \ref{l:noonephase} will say more: 
near $P$, the free boundary is a Lipschitz graph, with a small constant, and then the non-degeneracy
of $u$ shows that on the vertical line that goes through $P$, $u$ is (strictly) positive on one side of
$P$, and negative on the other side; this contradicts the fact that the open segment between
$(x_0,y_0)$ and $P$ lies in $\mathcal O$. Hence ${\mathcal O} \cap R_{ext} = \emptyset$. Note this, with Lemma \ref{l:nogostripone}, implies that $$\mathcal O \subset R_{in} := \{(x,y) \in R_N \mid |x| \leq 2N+1, |y| \leq 1-\frac{1}{44}\}.$$

We still need to show that ${\mathcal O}$ contains a branch point. Suppose not, and let
$z\in \partial\mathcal O$ be given. Obviously $u$ takes nonzero values near $z$, so $z$
lies in the free boundary, and by assumption $z$ is a one-phase point (since only one phase points or branch points can be on the boundary of an open subset of $\{u =0\}$). Suppose $z \in \Gamma^+(u)$.
In the present situation (and even in ambient dimension $3$), the free boundary in a neighborhood of $z$
is a smooth hypersurface $\Gamma$, with $u> 0$ on one side of $\Gamma$, and $u=0$ on the other
side. Thus $B_r(z) \cap \partial\mathcal O \subset \Gamma^+(u)$ for some $r > 0$ small enough (depending on $z$). 

More globally, the curve $\Gamma \subset \partial\mathcal O$ that contains $z$ is a Jordan curve
(it is disjoint from $R_{ext}$, does not touch the boundary of $R$, and is locally smooth), so $\mathcal O$, which is connected, is contained
in one of the two components of $U$ = $\R^2 \setminus \Gamma$. If $\mathcal O$ is contained in the unbounded component of $U$, then we can replace $u$ with $0$ on the bounded component of $U$ and keep $u$ a valid competitor 
(because $\Gamma \subset \partial \mathcal O \subset R_{in} \subset\subset R_N$). However, this will strictly decrease energy which is a contradiction. Thus $\mathcal O$ is contained in the bounded component of $U$.  Arguing as before, if $\mathcal O$ is not the entirety of this bounded component, then we could replace $u$ by $0$ on the rest of this bounded component and decrease energy. So it must be that $\mathcal O$ is one of the connected components of $\mathbb R^2 \setminus \Gamma$. 

This contradicts the fact that $\partial{\mathcal O}$ meets $\Gamma^-$ too. So $\partial{\mathcal O}$
contains a branch point, and the theorem follows from the proof of the lemma.

In ambient dimension $3$, the same proof would work, using the fact that a connected
smooth orientable hypersurface in $\R^n$ always separates $\R^n$ in exactly two connected component,
as in the Jordan curve theorem.
\end{proof}

The rest of this section will be devoted to the proof of Lemma \ref{l:noonephase}. 
We begin by observing that when $|x| > 2N$, $\partial_y u_N(x,-)$ must be close to 
$\partial_y v_N(x,-) = 2$
at most points:

\begin{lemma}\label{l:closeto2}
For every $\varepsilon > 0$ there exists an $N_0 = N_0(\varepsilon) > 0$ such that if $N \geq N_0$ then 
$$
\int_{\{(x,y) \ | \ 2N < |x| < 3N-\frac{1}{4}, \ |y| \leq 1\}} \left|\frac{\partial u_N}{\partial y} - 2\right|^2\, dxdy \leq \varepsilon.
$$
\end{lemma}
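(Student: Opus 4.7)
The plan is to exploit the fact that on the region $Q := \{(x,y) : 2N < |x| < 3N - \tfrac14,\ |y|\le 1\}$ we have $f_N \equiv 2$, so that the slice minimizer is the affine function $v_N(x,y) = 2y$, with $\frac{\partial v_N}{\partial y} \equiv 2$ and $H_x(v_N(x,\cdot)) \equiv 10$; hence $S_Q(v_N) = 5|Q|$ and, by applying Lemma~\ref{L:slicedQ} to each rectangular component of $Q$, $S_Q(u_N) - S_Q(v_N) \leq \tfrac{32}{N}$. Expanding
\[
\iint_Q \left(\frac{\partial u_N}{\partial y} - 2\right)^2 dx\, dy = \iint_Q \left(\frac{\partial u_N}{\partial y}\right)^2 dx\, dy - 4\iint_Q \frac{\partial u_N}{\partial y}\, dx\, dy + 4|Q|,
\]
the middle integral evaluates, by Fubini and the absolute continuity of each slice $u_N(x,\cdot)$ for $x \notin X_0$, to $\int_I (u_N(x,1) - u_N(x,-1))\,dx = 4|I| = 2|Q|$ (with $I$ denoting the $x$-range of $Q$), so the last two terms combine to $-4|Q|$. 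Using $\iint_Q (\partial_y u_N)^2 = S_Q(u_N) - |\{u_N \ne 0\} \cap Q| \le 4|Q| + \tfrac{32}{N} + |\{u_N = 0\} \cap Q|$ then reduces matters to showing that $|\{u_N = 0\} \cap Q| = O(1/N)$:
\[
\iint_Q \left(\frac{\partial u_N}{\partial y} - 2\right)^2 dx\, dy \leq \frac{32}{N} + |\{u_N = 0\} \cap Q|.
\]

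The heart of the argument is a slice-by-slice Cauchy--Schwarz. Fix $x \in I \setminus X_0$ and set $Z(x) := |\{y \in [-1,1] : u_N(x,y) = 0\}|$. Since $\frac{\partial u_N}{\partial y} = 0$ almost everywhere on $\{u_N = 0\}$ and $\int_{-1}^1 \frac{\partial u_N}{\partial y}\,dy = 2f_N(x) = 4$, Cauchy--Schwarz on the set of measure $2 - Z(x)$ where $u_N \ne 0$ gives
\[
\int_{-1}^1 \left(\frac{\partial u_N}{\partial y}\right)^2 dy \geq \frac{16}{2 - Z(x)},
\]
so that $H_x(u_N(x,\cdot)) \geq \tfrac{16}{2 - Z(x)} + (2 - Z(x))$. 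Subtracting $H_x(v_N(x,\cdot)) = 10$, one checks elementarily that $g(Z) := \tfrac{16}{2-Z} - 8 - Z$ satisfies $g(0) = 0$ and $g'(Z) = \tfrac{16}{(2-Z)^2} - 1 \geq 3$ on $[0,2)$, whence $H_x(u_N) - H_x(v_N) \geq 3 Z(x)$. Integrating over $I$ and reusing Lemma~\ref{L:slicedQ} produces $3 |\{u_N = 0\} \cap Q| \leq \tfrac{32}{N}$; chaining the estimates then yields $\iint_Q (\partial_y u_N - 2)^2 \leq \tfrac{128}{3N}$, which is at most $\varepsilon$ provided $N \geq N_0(\varepsilon) := \lceil 128/(3\varepsilon) \rceil$.

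The main obstacle is precisely this zero-set estimate. Applying Corollary~\ref{c:reifenbergflat} alone would only confine $\{u_N = 0\} \cap Q$ to a strip of height $\theta$ about $\{y = 0\}$, yielding $|\{u_N = 0\} \cap Q| \lesssim \theta \cdot N$ (since the $x$-range of $Q$ has length proportional to $N$); this grows with $N$ and is far too crude for our purposes. The slicewise Cauchy--Schwarz above replaces the uniform strip bound by a quantitative energy cost \emph{linear} in the zero-set width $Z(x)$, and it is this linear dependence, set against the $O(1/N)$ sliced-energy budget, that delivers the required $1/N$ decay after integration.
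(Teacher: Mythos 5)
Your proof is correct, and while the first half coincides with the paper's computation (your expansion of the square is algebraically the same as the paper's Pythagoras identity coming from the Euler--Lagrange orthogonality $\int \partial_y v_N(\partial_y u_N - \partial_y v_N)\,dy = 0$, and both reduce the problem to $H_x(u_N) - H_x(v_N)$ plus the slice zero-set measure $Z(x) = 2 - |\{u_N(x,\cdot)\neq 0\}|$), you handle the zero-set term by a genuinely different mechanism. The paper invokes Corollary \ref{c:reifenbergflat} to confine $\{u_N(x,\cdot)=0\}$ to $\{|y|<\theta\}$, giving $Z(x)\le 2\theta$ pointwise; that corollary in turn rests on Lemma \ref{error} and the uniform Lipschitz bound of Theorem \ref{t:uniformlip}. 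You instead prove the slicewise convexity estimate $H_x(u_N(x,\cdot)) - H_x(v_N(x,\cdot)) \ge 3Z(x)$ via Cauchy--Schwarz (using $\partial_y u_N = 0$ a.e.\ on the level set and the trace values $\pm 2$), and then let the $O(1/N)$ sliced-energy budget of Lemma \ref{L:slicedQ} absorb $\int_I Z(x)\,dx$. This is more elementary and self-contained --- it needs neither the Lipschitz bound nor the flatness corollary, does not actually require staying away from $|x| = 3N$, and yields an explicit rate $128/(3N)$ rather than mere smallness. The trade-off is negligible here since those auxiliary results are needed elsewhere in the paper anyway, but as a proof of this lemma your route is cleaner.
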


\begin{proof}
Recall that $f_N(x) = 2$ for $2N \leq |x| \leq 3N$, so $v_N(x,y) = 2 y$ and $\partial_y v_N(x,-) = 2$.
Also, $y\mapsto v_N(x,y)$ is harmonic on $[-1,1]$ and (when $x \notin X_0$) 
$y \mapsto u_N(x,y)$ is a $W^{1,2}$ function with the same trace as $v_N$. 
Fix such an $x$, and set $I(w) = \int_{-1}^1 |w_y|^2\, dy$ for $w\in W^{1,2}([-1,1])$. Amongst $w\in W^{1,2}([-1,1])$
with the same boundary values $\pm 2$ as $v_N(x,-)$,
$v_N(x,-)$ minimizes $I$. Thus the Euler-Lagrange equation shows that  $\int_{-1}^1 \partial_y v_N (\partial_y u_N-\partial_y v_N) dy=0$. Hence by Pythagorus
\[
\int_{-1}^1|\partial_y u_N|^2 = \int_{-1}^1 |\partial_y v_N|^2dy 
+ \int_{-1}^1|\partial_y u_N-\partial_y v_N|^2dy.
\]
Now we also care about the functional $H_x$, so we need to add $|\{y \mid w(y) \neq 0\}|$
to $I(w)$. For $v_N(x,-)$, we get $2$, since $v_N(x,y)=2y$ only vanishes at $0$.
For $u_N(x,-)$, Corollary~\ref{c:reifenbergflat} says that for any given $\theta > 0$, and
if $N$ is large enough $u_N(-)$ can only vanish for $|y| < \theta$. Then $2- |\{y \mid u_N(x,y) \neq 0\}| \leq 2\theta$ and 
\begin{align*}
\int_{-1}^1|\partial_y u_N-2|^2dy &= \int_{-1}^1|\partial_y u_N-\partial_y v_N|^2dy
= \int_{-1}^1|\partial_y u_N|^2\, dy - \int_{-1}^1 |\partial_y v_N|^2\, dy 
\\ &= H_x(u_N(x,-)) - H_x(v_N(x,-)) + 2 -|\{y \mid u_N(x,y) \neq 0\}|
\\ &\leq H_x(u_N(x,-)) - H_x(v_N(x,-)) + 2 \theta
\end{align*}
We now integrate over $x$ such that $2N < |x| < 3N-\frac{1}{4}$, use 
\eqref{m1}, 
and get the desired result for $N\ge N_0(\varepsilon)$. 
\end{proof}


We now need to invoke the results of \cite{DeSpVe21} to show that the free boundary is smooth and that the positive and negative parts of $u_N$ extend smoothly to the free boundary. Recall that $u_N^+ = \max\{u_N, 0\}$ and $u_N^- = \max\{-u_N, 0\}$.

\begin{lemma}\label{l:smooth}
Let $\delta > 0$. There exists  $N_0 = N_0(\delta) > 1$ such that if $N \geq N_0$, then each $\partial \{u^{\pm}_N  >0\} \cap (\{ 2N< |x| < 3N-1\}\times[-1,1])$ is a $C^{1,1/4}$-graph over the set $\{y = 0\}$ with norm $\leq \delta$. Furthermore, 
\[|\nabla u_N^{\pm}| \in C^{0,1/4}\left(\overline{\{u_N^{\pm} > 0\}}\cap (\{2N \leq |x| \leq 3N-1-\delta\}\times[-1,1])\right),\]
with a $C^{0,1/4}$ seminorm less than $1$.
\end{lemma}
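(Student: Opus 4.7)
\bigskip

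\noindent\textbf{Proof proposal for Lemma \ref{l:smooth}.} The strategy is to upgrade the previously established flatness in the exterior region $\{2N<|x|<3N-1\}\times[-1,1]$ into genuine $C^{1,1/4}$ regularity of the free boundary, via flatness-implies-regularity theorems, and then to use boundary Schauder estimates to get Hölder continuity of the gradient. Throughout, the ``reference object'' is $v_N(x,y)=2y$, which is the slice minimizer on this range of $x$.

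\emph{Step 1: Uniform flatness.} First I would show that for every $\varepsilon>0$ there is an $N_0(\delta,\varepsilon)$ such that
\[
\|u_N(x,y)-2y\|_{L^\infty(\{2N\le|x|\le 3N-1\}\times[-1,1])}\le\varepsilon
\]
for $N\ge N_0$. Lemma \ref{L:slicedQ} gives $S_Q(u_N)\le S_Q(v_N)+16/N$, so on a set of $x$'s of near-full measure in $[2N,3N-1]$ the slice energy excess $H_x(u_N(x,\cdot))-H_x(v_N(x,\cdot))$ is small. On these slices Lemma \ref{error} (which applies since $f_N(x)=2\ge 1$) gives $L^\infty$-closeness of $u_N(x,\cdot)$ to $v_N(x,\cdot)=2y$. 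The uniform Lipschitz bound from Theorem \ref{t:uniformlip} then propagates closeness from these ``good'' slices to \emph{all} slices, via a standard covering/interpolation argument combined with Corollary \ref{c:reifenbergflat} (which already confines $\{u_N=0\}$ to a thin strip around $\{y=0\}$).

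\emph{Step 2: Flatness-implies-regularity.} With $u_N$ this close to $2y$, every free boundary point $(x_0,y_0)\in\Gamma^+\cup\Gamma^-$ in the exterior region has flatness direction $(0,1)$ with flatness constant $O(\varepsilon)$. At a two-phase point, I would invoke Theorem \ref{t:DSV} of \cite{DeSpVe21} to conclude that both $\Gamma^+$ and $\Gamma^-$ are $C^{1,1/2}$ graphs near $(x_0,y_0)$; at a one-phase point, the classical flatness theorem of Alt--Caffarelli \cite{AlCa81} (or De Silva's viscosity version) gives a $C^{1,\alpha}$ graph for some universal $\alpha>0$. In both cases the $C^{1,\alpha}$ norm depends only on the flatness constant and is small. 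Taking $\alpha=1/4$ (less than both the two-phase and one-phase exponents) and piecing together the small-slope local graphs over $\{y=0\}$—which is possible because all of them have well-defined vertical direction and slopes uniformly $<1$—yields that each of $\Gamma^+\cap R_{ext}$ and $\Gamma^-\cap R_{ext}$ is a single global $C^{1,1/4}$ graph over $\{y=0\}$. Choosing $\varepsilon$ sufficiently small relative to $\delta$ makes the $C^{1,1/4}$ norm $\le\delta$.

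\emph{Step 3: Hölder continuity of $|\nabla u_N^\pm|$.} Now $u_N^+$ is harmonic in the $C^{1,1/4}$ domain $\{u_N>0\}\cap R_{ext}$ and vanishes on the $C^{1,1/4}$ portion of the boundary given by $\Gamma^+$. Standard boundary Schauder estimates (applied away from the vertical cross-sections $\{|x|=3N-1-\delta\}$, which is why we need the $\delta$-buffer) give $u_N^+\in C^{1,1/4}$ up to $\Gamma^+$. Hence $|\nabla u_N^+|$ is $C^{0,1/4}$ on the closure of the positivity set inside $\{2N\le|x|\le 3N-1-\delta\}\times[-1,1]$. A rescaling/compactness argument shows that the $C^{0,1/4}$ seminorm is controlled by the same flatness parameter, hence can be made $<1$ by enlarging $N_0$. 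The case of $u_N^-$ is symmetric.

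The principal obstacle is Step 1: the closeness estimate $\|u_N-2y\|_{L^\infty}\le\varepsilon$ is delivered only slice-by-slice (via Lemma \ref{error}) on a set of nearly full measure, and I need to combine this with the Lipschitz bound and the thin-strip location of $\{u_N=0\}$ to upgrade it to uniform pointwise closeness. Once the flatness is in hand, Steps 2 and 3 are essentially black-box applications of existing regularity theory.
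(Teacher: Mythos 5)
Your proposal follows essentially the same route as the paper: establish uniform flatness of $u_N$ in the exterior region (the paper phrases this as the Reifenberg flatness of Corollary \ref{c:reifenbergflat} combined with the uniform Lipschitz bound of Theorem \ref{t:uniformlip}, which is interchangeable with your $L^\infty$-closeness to $2y$, and your Step 1 argument via Lemma \ref{L:slicedQ}, Chebyshev, Lemma \ref{error} and Lipschitz propagation is exactly the technique used throughout Section 3), then invoke one-phase flat-implies-smooth away from two-phase points and the two-phase $\varepsilon$-regularity theorem near them, and finish with elliptic regularity for the gradient. The only caveats are that in Step 2 you need the quantitative $\varepsilon$-regularity statement \cite[Theorem 3.1]{DeSpVe21} rather than the qualitative Theorem \ref{t:DSV} (which provides an $r_0$ depending on the point and hence no uniformity in the point or in $N$, so it cannot by itself give the norm bound $\leq \delta$), and at a one-phase point lying within a fixed $r_0$ of a two-phase point you should recenter at the nearby two-phase point and apply that theorem at scale $2r_0$, since neither the one-phase nor the two-phase theorem applies directly at a fixed scale centered at such a point.
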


\begin{proof}
Pick some $r_0 > 0$ small, but independent of $N, \delta$. For points $x_0$ such that $B(x_0, r_0) \cap \Gamma_{\mathrm{TP}}(u_N) = \emptyset$, the result follows from uniform Reifenberg flatness (Corollary \ref{c:reifenbergflat}) and standard ``flat-implies-smooth" regularity for the one-phase problem (c.f. \cite{AlCa81}). 

 If $B(x_0, r_0) \cap \Gamma_{\mathrm{TP}}(u_N) \neq \emptyset$, then we can consider $y\in B(x_0, r_0) \cap \Gamma_{\mathrm{TP}}(u_N)$ and look at $B(y, 2r_0)$. Then the regularity follows from the $\theta$-Reifenberg flatness at scale 1 (Corollary \ref{c:reifenbergflat}), the uniform Lipschitz continuity (Theorem \ref{t:uniformlip}) and \cite[Theorem 3.1]{DeSpVe21}. 

In both instances, the dependence on $r_0, \theta$ is such that the $C^{1,1/4}$-norm of the graph(s) goes to zero as $r_0 > 0$ stays constant but the Reifenberg flatness parameter, $\theta$, goes to zero. Corollary \ref{c:reifenbergflat} says we can take $\theta$ arbitrarily small, at the expends of making $N$ larger. The regularity of the gradient is a consequence of standard elliptic regularity once  we know the regularity of the free boundary. 
\end{proof}

We are finally ready to prove Lemma \ref{l:noonephase}:

\begin{proof}
Let $\varepsilon > 0$ be small, to be chosen later and assume by contradition that there exists a one phase point $(x_0, y_0) \in \partial\{u_N > 0\}\cap \{2N\leq |x| \leq 3N-1\}$. By Lemma \ref{l:closeto2} we may choose $N$ large enough (depending only on $\varepsilon$) such that for any square, $Q$, of side length $\ell(Q) \leq 3/4$ centered at the point $(x_0,y_0)$ we have 
\begin{equation}\label{4a1}
\int_Q \left|\frac{\partial u_N}{\partial y}- 2\right|^2\, dx dy \leq \varepsilon.
\end{equation}
Using Fubini we also get that $$\int_0^{3/4}\int_{\partial Q}\left|\frac{\partial u_N}{\partial y} - 2\right|^2 \, d\mathcal H^{1} d\ell(Q) \leq \varepsilon,$$ where the integration is occurring over squares all centered at $(x_0, y_0)$ with increasing side lengths. Thus we can pick a cube $Q_0$  (which will be fixed going forward) with $1/2 \leq \ell(Q_0) \leq 3/4$ such that \begin{equation}\label{e:smallinint}\int_{Q_0} \left|\frac{\partial u_N}{\partial y}- 2\right|^2\, dx dy, \int_{\partial Q_0} \left|\frac{\partial u_N}{\partial y}- 2\right|^2\, d\mathcal H^1 \leq 4\varepsilon.\end{equation} Note that $u_N|_{Q_0}$ is $L$-Lipschitz (by Theorem \ref{t:uniformlip}), where $L$ is independent of $N, Q_0$.


Let $\tilde{Q} = Q_0\cap \{u_N > 0\}$. By Lemma \ref{l:smooth}, the domain $\tilde{Q}$ is piece-wise $C^{1,1/4}$ and is an NTA domain, with constants uniform in $N$ (c.f. \cite{JeKe82} for definitions and details). Let us say a bit more about this; the NTA constants depend on how the vertical edges of $Q$ touch the smooth graph $\partial \{u_N > 0\}$. However, this graph over $\{y = 0\}$ has very small norm (uniform in $N$) so this intersection happens (quantitatively) transversely and thus the NTA constants are also uniform in $N$. These bounds on norm of the graph which gives $\partial \{u_N > 0\}$ also imply that 
$4 \geq \mathcal H^1(\tilde{\partial Q}) \geq 1$ 
and $|\tilde{Q}| \geq \frac{1}{8}$. From this information, using \eqref{e:smallinint} and Chebyshev, there exists a $A \in \tilde{Q}$ with $\mathrm{dist}(A, \partial \tilde{Q}) > \frac{1}{100}$ and $|\partial_y u(A) - 2|^2 < 8\varepsilon$ (this will work as long as $\varepsilon > 0$ is small enough).

 Let $\omega_N^A$ be the harmonic measure of $\tilde{Q}$ with a pole at $A$ (where the notation emphasizes the dependence on $N$). Since $\partial_y u$ is a harmonic function in $\tilde{Q}$ we have the following integral representation: $$\partial_y u_N(A) = \int_{\partial \tilde{Q}} \partial_y u_N(P)d\omega_N^A(P).$$
 
 Finally, it will be convenient to let $\partial \tilde{Q} = \Gamma_1 \cup \Gamma_2$ where $\Gamma_1 = \partial \{u_N > 0\} \cap \overline{Q}_0$ and $\Gamma_2 = \partial Q_0 \cap \overline{\{u_N > 0\}}$, see the picture below.
 \vspace{0.5cm}
 \begin{center}
 \begin{tikzpicture}[scale=1.0]
\draw [thick, smooth] (-3,-0.5) to[out=45, in=135] (0,0) to[out=315, in=180] (3,0);
\draw[name path = A] [thick, red, smooth] (-0.8,0.42) to[out=347, in=135] (0,0) to[out=315, in=173] (0.8,-0.38);
\draw [thick, smooth] (-0.8,-0.8) rectangle (0.8, 0.8);
\draw [very thick, blue, smooth] (-0.8, 0.8) -- (-0.8, 0.4);
\draw [very thick, blue, smooth] (0.8, 0.8) -- (0.8, -0.38);
\draw[name path = B] [very thick, blue, smooth] (-0.8,0.8) -- (0.8,0.8);
\node at (1.2, 0.5) {\textcolor{blue}{$\Gamma_2$}};
\node at (-0.2, -0.2) {\textcolor{red}{$\Gamma_1$}};
\node at (2.8,0.3) {\small $\partial\{u_N>0\}$};
\node at (0.2, 0.5) {\small $\tilde{Q}$};
\tikzfillbetween[of=A and B]{orange, opacity=0.1};
\end{tikzpicture}
\end{center}

Recall that $(x_0, y_0)$ is a one-phase point for $u_N$. Thus we can compute $|\partial_y u_N(x_0, y_0)| \leq |\partial_\nu u_N(x_0, y_0)| = 1$, where the last equality holds due to the free boundary condition at (regular) one-phase points. Because the derivative of $u_N$ restricted to $\partial \{u_N > 0\}$ has $C^{0, 1/4}$-semi-norm less than 1, we also have that 
$$
\begin{aligned} |\partial_y u_N(x_0,y_0) - \partial_y u_N(P)| <& 
\left(\frac{3}{4}\right)^{1/4}
< \frac{94}{100} \ \text{ for all } P \in \Gamma_1 
\\
\Rightarrow \partial_y u_N(P)\leq&  2-\frac{1}{20} 
\ \ \text{ for all } 
P \in \Gamma_1.
\end{aligned}
$$ 
Putting things together, we have that \begin{equation}\label{e:reducetoavg}\begin{aligned}
2-3\varepsilon^{1/2} \leq \partial_y u_N(A) \leq& \left(2-\frac{1}{20}\right)\omega_N^A(\Gamma_1) + \int_{\Gamma_2} \partial_y u_N(P)d\omega_N^A(P)\\
\Rightarrow 2(1-\omega_N^A(\Gamma_1)) + \frac{\omega_N^A(\Gamma_1)}{20} \leq& \int_{\Gamma_2} \partial_y u_N(P)d\omega_N^A(P) + 3\varepsilon^{1/2}\\
\Rightarrow 2 + c \leq& \fint_{\Gamma_2}\partial_y u_N(P)d\omega_N^A(P),
\end{aligned}\end{equation}
where in the last line we used that $\omega_N^A(\Gamma_2) = 1-\omega_N^A(\Gamma_1)$ and also Bourgain's Lemma (c.f. \cite[Lemma 4.2]{JeKe82}), which implies that there is a constant $\tilde{c} > 0$ (independent of $\varepsilon, N$) such that $\frac{\omega_N^A(\Gamma_1)}{1-\omega_N^A(\Gamma_1)} \geq \tilde{c}$ and $\tilde{c}^{-1} \geq \frac{1}{1-\omega_N^A(\Gamma_1)}$. 

Recall that $u$ is $L-$Lipschitz and write 
$\Gamma_2 = \Gamma_{2,+} \cup \Gamma_{2,-}$,
with 
\[
\Gamma_{2,+} = \{P\in \Gamma_2 \mid L \geq |\partial_y u_N(P)| > 2 + c/2\}
\ \text{ and } \Gamma_{2,-} 
= \{P \in \Gamma_2 \mid |\partial_y u_N(P)| \leq 2 + c/2\}
\]
After overestimating $\fint_{\Gamma_2} \partial_y u_N d\omega_N^A$ over each set, \eqref{e:reducetoavg}, gives us 
\begin{equation}\label{e:lowerboundratio} 
\frac{c}{2} \leq L \, 
\frac{\omega_N^A(\Gamma_{2,+})}{\omega_N^A(\Gamma_2)}.
\end{equation} 
Once more invoking Bourgain's theorem we have that
$$\theta: = \frac{c\omega_N^A(\Gamma_2)}{2L}$$ 
is bounded strictly away from zero, independently of $N$ (large enough) or $\varepsilon > 0$. 

Using the condition on the integral of $|\partial_y u_N - 2|$ on $\partial Q_0$ 
(i.e. \eqref{e:smallinint}) we see that 
\[
\mathcal H^1(\Gamma_{2,+})
< \frac{16\varepsilon}{c^2}.
\] 
Recall that $\mathcal H^1(\partial \tilde{Q}) \geq 1$ and we get that 
\begin{equation}\label{e:testainfinity}
\frac{\mathcal H^1(\Gamma_{2,+})}{\mathcal H^1(\partial \tilde{Q})}
< \frac{16\varepsilon}{c^2} \ll \theta 
\equiv  \frac{\omega_N^A(\Gamma_{2,+})}{\omega_N^A(\partial \tilde{Q})}.
\end{equation}

We now recall that in $\tilde{Q}$ we have that the harmonic measure $\omega_N^A \in A_\infty(\mathcal H^1)$ (see e.g. \cite[Theorem 2]{DaJe90}). In fact, the $A_\infty$-constants depend on the NTA constants of $\tilde{Q}$, the $1$-Ahlfors regularity of $\partial \tilde{Q}$, the distance from $A$ to $\partial \tilde{Q}$ and the diameter of $\tilde{Q}$ (for more details and definitions of the relevant terms, see \cite{DaJe90}). As discussed above, all of these quantities can be taken uniform for $N$ large enough. Thus we can take $\varepsilon > 0$ small until \eqref{e:testainfinity} contradicts $\omega_N^A \in A_\infty(\mathcal H^1)$ and we are done. 
\end{proof}

\begin{remark}\label{r:higherdim}
The arguments above can be adapted to produce
cusp points in $\mathbb R^{2+1}$,
but not directly in ambient dimensions larger than $3$. In the setting of $\R^3$,
our domain is given by $R_N := B'(0, 3N)\times [-1,1] \subset \mathbb R^{2+1}$ 
where $B'$ is a ball in $\mathbb R^{2}$. 
Then $f_N(r, \theta) = f_N(r): B'(0, 3N)\rightarrow \mathbb R$ depends only on the radial variable. Now we define $f_N$ piecewise: 

\begin{align*}
f_N(r)=\begin{cases} 1-\alpha, &\text{ if } r\le 1\\
\left(\frac{\log(r)}{\log(2N)} + 1\right)(1+\alpha) -2\alpha &\text{ if } 1\le |x|\le 2N\\
2, &\text{ if } 2N\le r\le 3N.
\end{cases}
\end{align*}

We can then define the slice minimizer similarly, where 
$v_N(r, \theta, -) \in W^{1,2}([-1,1])$ minimizes with the boundary values 
$v_N(r, \theta, \pm 1) = \pm f_N(r)$. In particular when $f_N(r) \geq 1$ we have 
$v_N(r, \theta, y) = yf_N(r)$ and when $f_N(r) \leq 1$ we have 
$v_N(r, \theta, y)= \mathrm{sgn}(y)\left(|y|-1 + f_N(r)\right)_+$. 
In either setting we have $|\partial_r v_N| \leq |\partial_r f_N|$. 

Computing just like in Lemma \ref{l:dvdxnotbig}, we get that \begin{equation}\label{e:higherdimnotbig}
\iint_{\overline{R}_N} \left|\frac{\partial v_N}{\partial r}\right|^2 \simeq \int_{1}^{2N} \frac{1}{r^2\log(2N)^2}r\, dr = \frac{1}{\log(2N)} \stackrel{N\rightarrow \infty}{\rightarrow} 0.
\end{equation}

From here we argue identically as above, noting that we never use the precise bound on  $\iint_{\overline{R}_N} \left|\frac{\partial v_N}{\partial r}\right|^2$, just that it goes to zero as $N \rightarrow \infty$, and every other quantity stays bounded. 

For example, the contradiction in the proof of Lemma \ref{l:doesntdiplow} now comes 
when
$\eta \leq \frac{C}{\eta \log(2N)}$, which is not true for $N > 1$ large enough. 
\end{remark}

\section{Accumulating Cusps for Almost Minimizers}\label{s:AMcusps}

In this section we prove that the cusp set for almost-minimizers to \eqref{e:ACF} can be essentially arbitrary. To state our results in maximum generality we introduce the variable coefficient version of \eqref{e:ACF}:

\begin{equation}\label{e:vACF}
    J_\Omega(u) := \int_{\Omega} |\nabla u|^2 + q_+^2(x)\chi_{\{u > 0\}}(x) + q_{-}^2(x)\chi_{\{u < 0\}}\, dx.
\end{equation}

Throughout this section we will assume that $q_{\pm} \in C^{0,\alpha}(\overline{\Omega})$ for some $\alpha \in (0,1)$ and that the weights satisfy the non-degeneracy condition; $q_{\pm} \geq c_0 > 0$ in all of the domain. Clearly we recover the original functional \eqref{e:ACF} 
by letting $q_{\pm} \equiv \lambda_{\pm}$ in \eqref{e:vACF}. 

We now state our main result:

\begin{theorem}\label{t:fbam}
Let $f^- \leq f^+ \in C^2(\mathbb R^n)$ such that $f^+ = f^- = 0$
outside of $B(0, R/10)$ for some large $R> 0$. Let $\Gamma^\pm$ be the graphs of $f^{\pm}$. For any $q_{\pm} \in C^{0,\alpha}$, with $c_0^{-1} \geq q_{\pm} \geq c_0 > 0$, there exists an almost-minimizer $u$ to the energy $J_{B(0, R)}$ such that $\Gamma^\pm = \partial \{\pm u > 0\}$.
\end{theorem}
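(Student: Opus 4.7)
The plan is to construct $u$ piecewise from two auxiliary functions $u^{\pm}$ built via regularized distance machinery, and then verify that the resulting function satisfies a sharp characterization of almost-minimizers (presumably the content of the forward-referenced Lemma \ref{l:fbam}). Concretely, let $U^+ = \{(x',x_n) : x_n > f^+(x')\}$ and $U^- = \{(x',x_n) : x_n < f^-(x')\}$; since $f^- \leq f^+$, these two open sets are disjoint, and their common complement contains the strip $\{f^- \leq x_n \leq f^+\}$ together with the exterior region. Set $u \equiv 0$ on the complement, and define $u = u^+$ on $U^+$ and $u = -u^-$ on $U^-$, where $u^{\pm}$ will be smooth on $\overline{U^{\pm}}$, vanish on $\Gamma^{\pm}$, and satisfy the Bernoulli-type condition $|\nabla u^{\pm}|_{\Gamma^{\pm}}| = q_{\pm}$.

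The functions $u^{\pm}$ are produced in two stages. First, I would apply the regularized distance construction of \cite{DaFeMa19} to the sets $\Gamma^{\pm}$, viewed as $(n-1)$-dimensional surfaces inside $\mathbb R^n$, to obtain smooth functions $D^{\pm}$ on $U^{\pm}$ that behave like the Euclidean distance to $\Gamma^{\pm}$ (in particular, $c^{-1}\mathrm{dist}(x,\Gamma^{\pm}) \leq D^{\pm}(x) \leq c\,\mathrm{dist}(x,\Gamma^{\pm})$ and with controlled $C^{k}$ bounds away from $\Gamma^{\pm}$). Since $\Gamma^{\pm}$ is $C^2$, standard estimates give that $|\nabla D^{\pm}|$ extends continuously up to $\Gamma^{\pm}$. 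Second, using the results of \cite{DaEnMa21}, I would modify $D^{\pm}$ by a factor so that the normal derivative of the corrected function along $\Gamma^{\pm}$ is exactly the prescribed Hölder weight $q_{\pm}$; this produces $u^{\pm}$ with $u^{\pm}|_{\Gamma^{\pm}} = 0$, $u^{\pm} > 0$ in $U^{\pm}$, $u^{\pm} \in C^{1,\alpha}(\overline{U^{\pm}})$, and $|\nabla u^{\pm}| = q_{\pm}$ on $\Gamma^{\pm}$. After cutting off smoothly outside $B(0, R/10)$ (where $f^{\pm} \equiv 0$ and the graphs are just a hyperplane) and extending by $0$, the global function $u$ is Lipschitz, continuous through $\Gamma^{\pm}$, and satisfies $\Gamma^{\pm} = \partial\{\pm u > 0\}$ as required.

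The final step is to check that $u$ is genuinely an almost-minimizer. The strategy here is to invoke the characterization lemma alluded to in the introduction (Lemma \ref{l:fbam}): if $u$ is Lipschitz, non-degenerate at the free boundary, smooth on each of $\{\pm u > 0\}$ up to its boundary, and satisfies $|\nabla u^{\pm}|^2 = q_{\pm}^2$ pointwise on $\Gamma^{\pm}$ (the correct Bernoulli condition for \eqref{e:vACF}), then $u$ is an almost-minimizer with exponent equal to the Hölder exponent of $q_{\pm}$. All these conditions are built into our construction: Lipschitz continuity and non-degeneracy come from the two-sided comparison of $D^{\pm}$ with the Euclidean distance, smoothness on each phase from the regularity of the regularized distance on the smooth side, and the sharp gradient identity from the normalization step using \cite{DaEnMa21}.

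The main obstacle is the last step near the branch set, i.e., where $f^+ = f^-$ and $\Gamma^+$ meets $\Gamma^-$. At such points, both phases of $u$ collapse simultaneously, and the "gap" $\{f^- < x_n < f^+\}$ on which $u \equiv 0$ degenerates to a lower dimensional set. The constants controlling non-degeneracy, the gradient bound, and the Hölder continuity of $\nabla u^{\pm}$ must remain uniform as one approaches these cusps; this is where the specific scaling properties of the David–Feneuil–Mayboroda regularized distance are essential, since they are designed precisely to handle closed sets of arbitrary codimension and fractal nature without quantitative loss. Once the uniform estimates are in place, competing $u$ against any $v$ with $v = u$ outside a ball $B$ yields $J_B(u) - J_B(v) \lesssim r(B)^{n+\alpha}$ by a standard computation using the Bernoulli condition and the Hölder regularity of $q_{\pm}$, completing the proof.
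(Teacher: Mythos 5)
Your construction is the paper's: take the David--Feneuil--Mayboroda regularized distance adapted to $\Gamma^{\pm}$, arrange $|\nabla u^{\pm}| = q_{\pm}$ on $\Gamma^{\pm}$ via \cite{DaEnMa21}, and feed the result into the characterization Lemma \ref{l:fbam}. Two points, however, deserve attention.

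First, the step ``cutting off smoothly outside $B(0,R/10)$ \dots and extending by $0$'' would, taken literally, break the theorem. If $u$ is forced to vanish outside $B(0,R/10)$ then $\partial\{\pm u>0\}$ acquires an artificial piece of boundary inside $B(0,R)$ that is not contained in $\Gamma^{\pm}$, and at that artificial boundary $u$ degenerates (a smooth cutoff kills the gradient there), so both conclusions --- $\Gamma^{\pm}=\partial\{\pm u>0\}$ and almost-minimality, which forces nondegeneracy --- fail. No cutoff is needed: one keeps $u^{\pm}=D_{\mu^{\pm},1}$ on all of $\Omega^{\pm}\cap B(0,4R)$ and sets $u=0$ only on the region between the two graphs. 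The hypothesis $f^{\pm}\equiv 0$ outside $B(0,R/10)$ is used solely to guarantee that the surface measures are globally Ahlfors regular (no issues at infinity), not to localize $u$.

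Second, ``modify $D^{\pm}$ by a factor'' is vaguer than what is actually done, and a literal multiplicative correction $u^{\pm}=\phi\, D^{\pm}$ with $\phi$ only $C^{0,\alpha}$ requires extra care to preserve the Hessian bound $\|D^2u^{\pm}\|\lesssim \mathrm{dist}(\cdot,\Gamma^{\pm})^{-1}$ demanded by Lemma \ref{l:fbam}. The cleaner route --- and the one \cite{DaEnMa21} actually supplies --- is to build the weight into the measure: set $d\mu^{\pm}=q_{\pm}^{-1}\,d\mathcal H^{n-1}|_{\Gamma^{\pm}}$ and take $u^{\pm}=D_{\mu^{\pm},1}$, so that Lemma \ref{l:regdistproperties} yields $|\nabla u^{\pm}|=c\,\Theta^{n-1}(\mu^{\pm},\cdot)^{-1}=c\,q_{\pm}$ on $\Gamma^{\pm}$ (up to a harmless dimensional normalization) together with all the estimates \eqref{e:rdist} in one stroke. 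With these corrections your argument coincides with the paper's proof.
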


Notice that for $u$ as in Theorem \ref{t:fbam} we have $\Gamma_{\mathrm{BP}}(u) = \partial_{\mathbb R^{n-1}}\{f^-(x) = f^+(x)\}$. Recall that any closed set can be the zero set of a $C^2$ function (take a smoothing of the distance function to the given set, c.f. \cite[VI, Theorem 2]{Steinbook}). As such, we have the following corollary (compare to Theorem \ref{t:ambranchpoints} from the introduction): 

\begin{corollary}\label{c:anyset}
Let $E \subset \mathbb R^{n-1}$ be any compact set  with no interior point in $R^{n-1}$
and let $q_{\pm} \in C^{0,\alpha}(\mathbb R^n)$ be non-degenerate and bounded. 
Then, for some $R > 0$ large enough depending on $E$, there exists an almost-minimizer $u$ 
to $J_{B(0, R)}$ with weights $q_{\pm}$, such that $\Gamma_{\mathrm{BP}}(u) = E$. 

Furthermore, we can take 
$\Gamma^+ := \partial\{ u > 0\}$
to be the reflection of $\Gamma^- :=  \partial\{ u < 0\}$
around $\{x_n = 0\} \subset \mathbb R^n$. 
\end{corollary}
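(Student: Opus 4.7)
The plan is to deduce Corollary \ref{c:anyset} directly from Theorem \ref{t:fbam} via the symmetric choice $f^+ = -f^- = f$ for a carefully constructed nonnegative $f \in C^2(\mathbb R^{n-1})$. Under this choice the reflection symmetry claimed in the corollary is automatic, since the graph of $-f$ is the reflection of the graph of $f$ across $\{x_n = 0\}$, so the entire task reduces to arranging that $\partial\{f>0\} = E$ inside $\mathbb R^{n-1}$.

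To build $f$, I would first invoke the smoothing of the distance function to $E$ (the Whitney/Stein construction cited in the excerpt just before the corollary) to obtain a $C^\infty$ nonnegative function $g\colon \mathbb R^{n-1}\to [0,\infty)$ with $g^{-1}(0) = E$. After choosing $R > 0$ large enough that $E \Subset B(0, R/100)$, I would multiply $g$ by a smooth nonnegative cutoff $\eta$ which equals $1$ on $B(0, R/50)$ and is supported in $B(0, R/20)$. The product $f := g\eta$ is then $C^2$, nonnegative, and vanishes outside $B(0, R/10)$. The pair $f^\pm := \pm f$ therefore satisfies $f^- \leq f^+$ and the compact-support hypothesis of Theorem \ref{t:fbam}, and that theorem produces an almost-minimizer $u$ of $J_{B(0,R)}$ with weights $q_\pm$ whose free boundaries $\Gamma^\pm = \partial\{\pm u > 0\}$ are precisely the graphs of $\pm f$.

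The bulk of the remaining work is the identification $\Gamma_{\mathrm{BP}}(u) = E$. Directly from the graph description, $\Gamma^+\cap \Gamma^-$ projects onto $\{f=0\}$ in $\mathbb R^{n-1}$ while the one-phase locus $\Gamma_{\mathrm{OP}}(u)$ projects onto $\{f>0\}$, so the projection of $\Gamma_{\mathrm{BP}}(u)$ is $\{f=0\}\cap \overline{\{f>0\}} = \partial\{f>0\}$. Since $E$ has empty interior, the function $f$ is strictly positive on a dense subset of any sufficiently small neighborhood of $E$, so the inclusion $E \subset \partial\{f>0\}$ is immediate.

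The central obstacle, and the one genuinely new point beyond a direct application of Theorem \ref{t:fbam}, is to verify the reverse inclusion; namely, that $\partial\{f>0\}$ does not pick up spurious contributions from the outer edge of the cutoff region (e.g.\ from $\partial B(0, R/20)$). I would handle this by tuning the construction so that the positivity set $\{f>0\}$ coincides with $B(0, R/100)\setminus E$, and then removing the unwanted outer-sphere contribution either by iterating the cutoff on a sequence of concentric annuli shrinking toward $\partial B(0,R)$ (so that the extra smooth contribution ends up in a neighborhood of $\partial B(0,R)$ where the branch-point set is not measured) or by combining the Whitney--Stein $g$ with a secondary regularization that absorbs the outer sphere contribution into an enlarged compact set with empty interior, and then observing that Theorem \ref{t:fbam} is stable under such perturbations. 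Once this topological point is dispatched, the corollary follows directly from Theorem \ref{t:fbam}.
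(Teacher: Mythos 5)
Your overall route is the paper's: take $f^+=-f^-=f$ with $f$ a (cut off) $C^2$ smoothing of the distance to $E$, apply Theorem \ref{t:fbam}, observe that $\Gamma_{\mathrm{BP}}(u)$ is the graph over $\partial\{f^+>f^-\}=\partial\{f>0\}$, and note that the reflection symmetry is automatic. You have also correctly isolated the one nontrivial point --- the spurious boundary component created by the cutoff --- but neither of your two proposed repairs closes it. The obstruction is genuinely topological: $\{f>0\}$ is a bounded open subset of $\mathbb R^{n-1}$, and if $\partial U=E$ for a nonempty bounded open $U$, then $U$ is both open and closed in $\mathbb R^{n-1}\setminus E$, which forces $\mathbb R^{n-1}\setminus E$ to be disconnected; this fails when $E$ is a point, an arc, a Cantor set, etc. So with your choice $\{f>0\}=B(0,R/100)\setminus E$, the sphere $\partial B(0,R/100)$ unavoidably consists of branch points, and it lies well inside $B(0,R)$, so it \emph{is} counted in $\Gamma_{\mathrm{BP}}(u)$. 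Your first fix (cutting off on annuli shrinking toward $\partial B(0,R)$) is incompatible with the hypothesis of Theorem \ref{t:fbam} that $f^{\pm}$ vanish outside $B(0,R/10)$, and in any case a cutoff sphere anywhere strictly inside $B(0,R)$ still contributes branch points; your second fix (enlarging $E$ to absorb the sphere) proves $\Gamma_{\mathrm{BP}}(u)=E\cup S$ for a different compact set, not the statement claimed.

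The repair is simpler than either suggestion: apply Theorem \ref{t:fbam} on a strictly larger ball $B(0,R')$ with, say, $R'= 30R$, taking $f$ with $f^{-1}(0)\cap B(0,2R)=E$, $f>0$ on $B(0,2R)\setminus E$, and the cutoff performed entirely in the annulus $\{2R\le |x'|\le R'/10\}$. The resulting function is an almost-minimizer on $B(0,R')$ and hence, directly from Definition \ref{d:am}, its restriction is an almost-minimizer on $B(0,R)$. Since $\Gamma^{\pm}=\partial\{\pm u>0\}$ and $\Gamma_{\mathrm{BP}}(u)$ are computed inside $B(0,R)$, and the only part of $\partial\{f>0\}$ meeting $B(0,R)$ is $E$, one gets $\Gamma_{\mathrm{BP}}(u)=E$ exactly; the extra sphere has been pushed outside the domain rather than merely toward its boundary. (The paper itself derives the corollary in one line from Theorem \ref{t:fbam} and the Whitney--Stein remark without addressing this point, but this is the intended reading of ``for some $R>0$ large enough depending on $E$.'') With this modification the rest of your argument --- the identification of the projection of $\Gamma_{\mathrm{BP}}(u)$ with $\partial\{f>0\}$ and the use of empty interior to get $E\subset\partial\{f>0\}$ --- is correct and complete.
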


Theorem \ref{t:fbam} will follow from two lemmas, the first a general result about what functions are almost-minimizers to the two-phase functional. The second, a construction of such functions. 

\begin{lemma}\label{l:fbam}
Let, $n\geq 2, \alpha \in (0,1), f^- \leq f^+ \in C^2(\mathbb R^n)$ 
such that $f^- = f^+ =0$
outside of $B(0, R/10)$ for some $R > 1$,
 and let $\Gamma^{\pm}$ be the graph
 of $f^{\pm}$. 
 Let  $\Omega^{+}$ (resp. $\Omega^-$) be the part of  $B(0, 4R)$
that lies above (resp. below) the graph of 
 $f^{+}$ (resp. $f^-$)
 for some $R > 0$ large. Let $u^{\pm} \in C^{1,\alpha}(\overline{\Omega}^{\pm})$ be such that  there exists a constant $C_1 > 0$ such that for $x \in \Omega^{\pm} \cap B(0,2R)$ we have that

 \begin{equation}\label{e:rdist}
 \begin{aligned} C_1^{-1}\mathrm{dist}(x, \Gamma^{\pm}) 
 \leq u^{\pm} \leq& C_1\mathrm{dist}(x, \Gamma^{\pm})\\ 
|\nabla u^{\pm}| \leq& C_2\\
\|D^2 u^{\pm}\| \leq& C_1\mathrm{dist}(x, \Gamma^{\pm})^{-1}. 
\end{aligned}
\end{equation}
Then $u = u^+ - u^-$ is an almost-minimizer to \eqref{e:vACF} inside of $B(0, R)$ where $q_{\pm}$ are the $C^{0,\alpha}$ functions which agree with $|\nabla u^{\pm}|$ on $\Gamma^{\pm}$.

More precisely, there exists a constant $C = C(C_1, \|f^\pm\|_{C^2}, \|u^\pm\|_{C^{1,\alpha}(\overline{\Omega^{\pm}})}) > 0$ and $1 > r_0 = r_0(C_1, \|f^\pm\|_{C^2}, \|u^\pm\|_{C^{1,\alpha}(\overline{\Omega^{\pm}})})  > 0$ such that for any ball $B$ satisfying $\overline{B} \subset B(0, R)$, and $r(B) \leq r_0$ we have \begin{equation}\label{e:am} J_{B}(u) \leq J_B(v) + Cr^{n+\frac{\alpha}{4n+2\alpha}}\end{equation} for any $v = u$ on $B(0, R)\backslash B$.
\end{lemma}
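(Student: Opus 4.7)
The plan is to fix a ball $B = B(x_0, r)$ with $\overline B \subset B(0, R)$ and a competitor $v$ with $v = u$ on $\partial B$, set $w = v - u \in W^{1,2}_0(B)$, and show $J_B(u) - J_B(v) \leq C r^{n+\alpha'}$ with $\alpha' = \alpha/(4n+2\alpha)$. Expanding the energy gives
\[
J_B(v) - J_B(u) = \int_B |\nabla w|^2 + 2\int_B \nabla u \cdot \nabla w + \mathcal{I}_+(w) + \mathcal{I}_-(w),
\]
where $\mathcal{I}_\pm(w) := \int_B q_\pm^2\bigl(\chi_{\{\pm v > 0\}} - \chi_{\{\pm u > 0\}}\bigr)$. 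The conceptual heart of the argument is that the structural identity $|\nabla u^\pm| = q_\pm$ on $\Gamma^\pm$ is precisely the free-boundary Euler--Lagrange condition for \eqref{e:vACF}, so $u$ is already ``stationary'' for $J$ in a strong sense, and only its non-harmonicity in $\Omega^\pm$ (controlled by $\|D^2 u^\pm\| \leq C/d_{\Gamma^\pm}$) contributes to the almost-minimizer defect.

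First I would compute the distributional Laplacian of $u$. Since $u = \pm u^\pm$ on $\Omega^\pm$ with $u^\pm = 0$ and outward-normal derivative $-q_\pm$ on $\Gamma^\pm$, Green's formula yields
\[
\Delta u = \chi_{\Omega^+}\Delta u^+ - \chi_{\Omega^-}\Delta u^- + q_+ \,\mathcal H^{n-1}|_{\Gamma^+} - q_- \,\mathcal H^{n-1}|_{\Gamma^-}
\]
as a distribution. Testing against $w$ converts $2\int_B\nabla u\cdot\nabla w$ into bulk integrals of $w\,\Delta u^\pm$ on $\Omega^\pm\cap B$ and surface integrals $\mp 2\int_{\Gamma^\pm\cap B} q_\pm w\,d\mathcal H^{n-1}$. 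These surface integrals are, to leading order, the first variation of $\mathcal{I}_\pm(w)$: using tubular-neighborhood normal coordinates around $\Gamma^\pm$ (whose Jacobian is controlled by $f^\pm \in C^2$), one writes $\mathcal{I}_\pm(w) = \pm 2\int_{\Gamma^\pm\cap B}q_\pm w\,d\mathcal H^{n-1} + R_\pm(w)$, where the leading term cancels the surface contribution exactly \emph{because} $|\nabla u^\pm| = q_\pm$ on $\Gamma^\pm$, and the remainder is controlled by the Hölder oscillation of $q_\pm$ across the tube (giving $Cr^\alpha\|w\|_{L^1(\Gamma^\pm\cap B)}$) plus a second-order term $C\|w\|_{L^2(\Gamma^\pm\cap B)}^2$.

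The bulk integrals are then handled by Hardy's inequality $\|w/d_{\Gamma^\pm}\|_{L^2(B)} \leq C\|\nabla w\|_{L^2(B)}$ (valid since $w\in W^{1,2}_0(B)$) combined with $|d_{\Gamma^\pm}\Delta u^\pm| \leq C$, giving $\bigl|\int_{\Omega^\pm\cap B}w\,\Delta u^\pm\bigr| \leq C r^{n/2}\|\nabla w\|_{L^2(B)}$. Combining the bulk estimate, the surface cancellation, and the trace inequality $\|w\|_{L^2(\Gamma^\pm\cap B)}^2 \leq Cr\|\nabla w\|_{L^2(B)}^2$, one arrives at
\[
J_B(v) - J_B(u) \geq \int_B|\nabla w|^2 - Cr^{n/2}(1+r^\alpha)\|\nabla w\|_{L^2(B)} - Cr\|\nabla w\|_{L^2(B)}^2.
\]
Naive Young's inequality only gives $J_B(u) - J_B(v) \leq Cr^n$; to extract the extra $r^{\alpha'}$ I would run a dichotomy on $E := \|\nabla w\|_{L^2(B)}^2$. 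If $E$ exceeds a threshold $r^{n+\beta}$, then $-\int|\nabla w|^2$ dominates and absorbs the errors with a surplus $r^\beta$; if $E < r^{n+\beta}$, then trace/Poincar\'e forces $w$ to be small on $\Gamma^\pm$, and a direct $C^{0,\alpha}$-based estimate of $R_\pm(w)$ and of the bulk remainder yields a bound obtained by interpolating between the $L^\infty$ and H\"older bounds on $\nabla u^\pm$. Balancing $\beta$ across the two regimes produces the exponent $\alpha' = \alpha/(4n+2\alpha)$ stated in the lemma.

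The most delicate step is the tubular-coarea expansion of $\mathcal{I}_\pm$ in Step~2, which must be carried out uniformly near $\Gamma^\pm$ (using $f^\pm\in C^2$) and, especially, near the branch set $\Gamma^+\cap\Gamma^-$, where the two tubes overlap and the middle zone $\{u=0\}$ pinches off, so the normal-coordinate system degenerates and must be replaced by a uniform parametrization controlled by the $C^2$ norms of $f^\pm$. Once the first-order cancellation is in place, the remainder of the argument is bookkeeping, tracking the constants carefully enough through the dichotomy to produce the advertised exponent $\alpha'$.
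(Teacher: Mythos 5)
Your expansion of the volume term is where the argument breaks. You write $\mathcal{I}_\pm(w) = \pm 2\int_{\Gamma^\pm\cap B} q_\pm w\, d\mathcal H^{n-1} + R_\pm(w)$ with $R_\pm$ controlled by $Cr^\alpha\|w\|_{L^1(\Gamma^\pm\cap B)} + C\|w\|_{L^2(\Gamma^\pm\cap B)}^2$, but $\mathcal{I}_\pm(w) = \int_B q_\pm^2(\chi_{\{\pm v>0\}}-\chi_{\{\pm u>0\}})$ depends on the full geometry of $\{v\neq 0\}$, and for a general $W^{1,2}$ competitor this set is in no sense a normal-graph perturbation of $\Omega^\pm$. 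Concretely: take $v=u$ except on a small ball $B'\subset\subset \{u>0\}\cap B$ well away from $\Gamma^+$, with $v$ forced to vanish on a compact $K\subset B'$ of positive measure. Then $w$ vanishes on $\Gamma^\pm$, so your claimed bound forces $R_+(w)=0$, yet $\mathcal{I}_+(w)=-\int_K q_+^2<0$. The tubular/coarea picture is valid only for competitors whose positivity sets are graph-like displacements of $\{u\neq 0\}$; those are not the competitors one must beat. What your computation actually establishes is that $u$ is (approximately) a \emph{critical point} of $J$ --- and, as the introduction of this paper stresses (e.g.\ $u(x)=|x|$), stationarity does not imply (almost-)minimality. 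Controlling the ``remainder'' in the dangerous direction (competitors that create new zero set deep inside $\Omega^\pm$, or that push the free boundary a macroscopic distance) requires capacity/isoperimetric input and a localization of $\partial\{v\neq0\}$, neither of which follows from trace estimates for $w$ on $\Gamma^\pm$.

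The paper's proof is structured entirely differently: it suffices to compare $u$ with the \emph{minimizer} $v$ of $J_B$ having the same boundary data, and the argument splits into cases according to the position of the center of $B$. At two-phase points one traps $\partial\{v\neq0\}$ in a slab of width $r^{1+\alpha/2}$ by sliding explicit planar sub/supersolutions (Lemma \ref{l:slidingbarriers}), invokes the $C^{1,\alpha}$ regularity of \cite{DeSpVe21} for $v$, and compares the Weiss energies $W(u,x_0,r)$ and $W(v,x_0,r)$ using monotonicity; away from the free boundary one compares $v$ with the harmonic replacement of $u$ and uses the isoperimetric inequality together with the lower bound $u\gtrsim r^{1-\varepsilon}$ to rule out the competitor profitably creating new zero set. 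If you want to pursue a direct first-variation route, you would at minimum have to (i) restrict to the minimizing competitor so that its free boundary has structure, and (ii) replace the trace-norm control of $R_\pm$ by a genuine localization of $\partial\{v\neq 0\}$ --- at which point you have essentially reconstructed the paper's barrier-plus-monotonicity argument.
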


Key to the proof of Lemma \ref{l:fbam} is the following result which is adapted from \cite{DeJe09}:

\begin{lemma}\label{l:slidingbarriers}
Let $v$ be a critical point to $J_B$ (associated to $q_{\pm}$). Assume there exists, parameterized by $t\in[a,b]$, a family of $\phi_t: \overline{B} \rightarrow \R$ (continuous in both variables) that satisfy the following properties:

\begin{enumerate}
\item $\Delta \phi_t = 0$ in $\{\phi_t \neq 0\} \cap B$.
\item $\{\phi_t = 0\}  = \partial \{\phi_t > 0\} = \partial \{\phi_t < 0\}$.  Furthermore $t\mapsto \{\phi_t = 0\}$ is continuous in the Hausdorff distance sense. 
\item At every point on $\partial \{\pm \phi_t > 0\}$ there exists a ball inside $\{\pm \phi_t > 0\}$ which touches the free boundary at that point. 
\item At every $x_1 \in \partial \{\pm \phi_t > 0\}$ we satisfy $$(\partial_{\nu^+} \phi_t)^2(x_1) - (\partial_{\nu^-} \phi_t)^2(x_1) \geq q^2_+(x_1) - q^2_-(x_1)$$ (respectively $\leq q^2_+ - q^2_-$ for supersolutions)
\item $\phi_b \leq v$ in $\overline{B}$ (respectively $\phi_b\geq v$ for supersolutions).
\item For all $\rho \in [a,b]$, $\phi_\rho \leq v$ on $\partial B$ and $\phi_\rho < v$ on $\partial B \cap \overline{\{v \neq 0\}}$ (with the inequalities reversed for supersolutions)
\end{enumerate}
then we have that $\phi_a \leq v$ in $\overline{B}$ (respectively $\phi_a \geq v$ in $\overline{B}$ for super solutions). 
\end{lemma}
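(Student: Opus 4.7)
The strategy is a standard continuity (sliding) argument: we will propagate the inequality $\phi_t \leq v$ from $t = b$ down to $t = a$ by a connectedness argument on the parameter interval. We focus on the subsolution case; the supersolution case follows by replacing $v$ with $-v$ and using the symmetric hypotheses.

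Define
\[
T := \{ t \in [a,b] : \phi_t \leq v \text{ in } \overline{B}\}.
\]
By hypothesis (5), $b \in T$, so $T$ is nonempty. The plan is to show $T$ is both closed and relatively open in $[a,b]$, from which connectedness forces $T = [a,b]$, and in particular $a \in T$.

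Closedness is immediate from condition (2) (the continuity of $t \mapsto \phi_t$): if $t_n \in T$ and $t_n \to t_0$, then $\phi_{t_0} \leq v$ follows by pointwise passage to the limit. The main work is in showing openness at any $t_0 \in T$, i.e., that $t_0 - \varepsilon \in T$ for all sufficiently small $\varepsilon > 0$. The plan is to argue by contradiction: if there is a sequence $t_n \uparrow t_0$ with points $x_n \in \overline{B}$ at which $\phi_{t_n}(x_n) > v(x_n)$, then passing to a subsequence with $x_n \to x_\infty \in \overline B$ and using the continuity in $t$, we obtain $\phi_{t_0}(x_\infty) = v(x_\infty)$, i.e., a touching point from below. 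The proof then proceeds by case analysis on the nature of $x_\infty$.

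First, hypothesis (6) excludes $x_\infty \in \partial B$: indeed, if $v(x_\infty) \neq 0$ then $\phi_{t_0}(x_\infty) < v(x_\infty)$ strictly, while if $v(x_\infty) = 0$ then $\phi_{t_0}(x_\infty) \leq 0$ and by continuity of $\phi_{t_n}$, $x_n$ would have to drift into the interior of $B$ for the overshoot to occur. So we may assume $x_\infty$ lies in the open ball $B$. Next, if $\phi_{t_0}(x_\infty) = v(x_\infty) \neq 0$, then both $\phi_{t_0}$ and $v$ are harmonic near $x_\infty$ (using condition (1) and the fact that critical points of \eqref{e:vACF} are harmonic in $\{v > 0\}$ and $\{v < 0\}$), so the nonnegative harmonic function $v - \phi_{t_0}$ attains an interior minimum of $0$ and by the strong maximum principle vanishes identically on the connected component of $\{\phi_{t_0} v > 0\}$ containing $x_\infty$. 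Propagating to $\partial B$ contradicts condition (6).

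The remaining, and most delicate, case is $\phi_{t_0}(x_\infty) = v(x_\infty) = 0$, where $x_\infty$ is a common free boundary point. Here the plan is to invoke the touching ball hypothesis (3) to produce balls $B^{\pm}\subset \{\pm \phi_{t_0} > 0\}$ tangent at $x_\infty$; since $v \geq \phi_{t_0}$ and both vanish at $x_\infty$, the same balls lie in $\{\pm v > 0\}$. Applying the Hopf boundary point lemma to the nonnegative harmonic functions $v^+ - \phi_{t_0}^+$ in $B^+$ and $\phi_{t_0}^- - v^-$ in $B^-$, one obtains the boundary derivative comparisons
\[
\partial_{\nu^+} v(x_\infty) \geq \partial_{\nu^+} \phi_{t_0}(x_\infty), \qquad \partial_{\nu^-} v(x_\infty) \leq \partial_{\nu^-} \phi_{t_0}(x_\infty),
\]
each strict unless one of the differences vanishes identically in the respective ball. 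Combining these with the critical point condition for $v$ and with hypothesis (4),
\[
q_+^2 - q_-^2 = (\partial_{\nu^+} v)^2 - (\partial_{\nu^-} v)^2 \geq (\partial_{\nu^+}\phi_{t_0})^2 - (\partial_{\nu^-}\phi_{t_0})^2 \geq q_+^2 - q_-^2,
\]
forces equality throughout, whence Hopf's strict form yields $v \equiv \phi_{t_0}$ on $B^+$ (and on $B^-$). Propagating this identity by unique continuation through the harmonic phases up to $\partial B$ again contradicts condition (6). The hard part will be bookkeeping the free boundary touching case cleanly, in particular verifying that the touching ball from below persists for $v$ and that the Hopf comparison is applicable with the modest regularity $\phi_t$ is assumed to have; this is the technical core inherited from the De Silva--Jerison framework cited in \cite{DeJe09}. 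Once openness is established the conclusion $a \in T$, and hence $\phi_a \leq v$ on $\overline{B}$, is immediate.
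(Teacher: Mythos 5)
Your overall architecture (connectedness of the set of good parameters, Hopf's lemma at a touching point combined with condition (4) and the free boundary condition for $v$) is the same as the paper's. The organization of the openness step differs slightly: you produce a touching point at $t_0$ by compactness and contradiction, while the paper first shows that $t_0\in E$ forces the \emph{strict} containment $\overline{\{\phi_{t_0}>0\}}\cap\overline B\subset\{v>0\}$ (via the same Hopf argument), then uses the Hausdorff continuity in (2) to propagate this strict containment to nearby $\tilde t$ and verifies $\phi_{\tilde t}\le v$ directly by the maximum principle on $\{\phi_{\tilde t}>0\}$, on $\{\phi_{\tilde t}=0\}$, and on $\{v\le 0\}$. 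The paper's route also disposes of the boundary case via condition (6) more cleanly than your ``drift into the interior'' remark.

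There is, however, a concrete gap in your treatment of the free-boundary touching case. From $\phi_{t_0}\le v$ you get $\{\phi_{t_0}>0\}\subset\{v>0\}$, so $B^+\subset\{v>0\}$ is fine; but the inclusion on the negative side goes the \emph{wrong} way ($\{v<0\}\subset\{\phi_{t_0}<0\}$), so your claim that $B^-\subset\{v<0\}$ is false in general. Two things break as a consequence. First, the Hopf comparison of $\phi_{t_0}^--v^-$ on $B^-$ cannot be justified by harmonicity of $v^-$ there; you must instead use that $v^-$ is subharmonic (so the nonnegative difference $-\phi_{t_0}-v^-$ is superharmonic on $B^-\subset\{\phi_{t_0}<0\}$), which still yields $\partial_{\nu^-}v\le\partial_{\nu^-}\phi_{t_0}$. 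Second, and more seriously, $x_\infty$ need not be a two-phase point of $v$, so the identity $(\partial_{\nu^+}v)^2-(\partial_{\nu^-}v)^2=q_+^2-q_-^2$ that anchors your chain of inequalities need not hold: at a one-phase point of $v$ one has $\partial_{\nu^+}v=q_+$ and $\partial_{\nu^-}v=0$, and condition (4) alone does not close the argument. The paper splits into the one-phase and two-phase cases for $v$ at the touching point and handles the one-phase case by the separate inequality $\partial_{\nu^+}\phi_t\ge q_+$ (which the explicit barriers satisfy). You need to add this case distinction; as written, the ``technical core'' you defer is precisely where your proof is incomplete.
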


\begin{proof}[Proof of Lemma \ref{l:fbam}, assuming Lemma \ref{l:slidingbarriers}]

We first check the almost-minimization condition \eqref{e:am} for  $x_0 \in \Gamma^+ \cap \Gamma^-$:

\medskip

\noindent {\bf Case 1:} Let $x_0  \in \Gamma^+\cap \Gamma^-$ and $r > 0$ small enough and let $v =u$ on $B(0, R)\backslash B(x_0,r)$. 

Since $u^{\pm} \in C^{1,\alpha}(\overline{\Omega^{\pm}})$ and $\Gamma^+\cap\Gamma^-$ is closed and smooth, there exists an $r_0$ such that if $r < r_0$ then, for $x\in B(x_0, r)$ \begin{equation}\label{e:blowuptp}\begin{aligned} q_+(x_0)((x-x_0)\cdot e)^+ -& q_-(x_0)((x-x_0)\cdot e)^- -r^{1+7\alpha/8} \leq u(x)\\ u(x)\leq q_+(x_0)((x-x_0)\cdot e)^+ -& q_-(x_0)((x-x_0)\cdot e)^- + r^{1+7\alpha/8}.\end{aligned}\end{equation} Recall that $q_\pm(x_0) = |\nabla u^\pm|(x_0)$ by definition.  

Recall the 
functional proved to be monotone by Weiss \cite{Wei99}: $$W(u, x_0, r) \equiv \frac{1}{r^n} \int_{B(x_0, r)} |\nabla u|^2 + q^2_+(x) \chi_{\{u > 0\}} + q_-^2(x) \chi_{\{u < 0\}}\, dx - \frac{1}{r^{n+1}} \int_{\partial B(x_0, r)} u^2\, d\sigma.$$ It follows from \eqref{e:blowuptp} and the $C^{0,\alpha}$-character of $|\nabla u^{\pm}|$ that \begin{equation}\label{e:Wunottoobig} W(u, x_0, r) \leq W(u, x_0, 0) + Cr^{7\alpha/8} = \frac{1}{2}\mathrm{Vol}(B(0,1))(q^2_+(x_0) + q^2_-(x_0)) + r^{7\alpha/8},\end{equation} where $C > 0$ depends on the $C^{0,\alpha}$-norm of $|\nabla u^\pm|$ restricted to $\overline{\Omega}^{\pm}$. 

We now want to show that for any minimizer $v$ to $J_{B(x_0, r)}$ with $v = u$ in $\mathbb R^n \backslash B(x_0, r)$ we have 
\begin{equation}\label{e:squeezed} 
(\partial \{v > 0\}\cup \partial \{v < 0\})\cap B(x_0, r) \subset \{x\in B(x_0, r)\mid |(x- x_0) \cdot e| < r^{1+\alpha/2}\}
\end{equation} 

Assume that \eqref{e:squeezed} holds. We would like to compare $W(v, x_0, r)$ to $W(u, x_0, r)$ but na\"ively underestimating $W(v, x_0, r)$ by $W(v, x_0, 0)$ is problematic as $x_0$ may not be in the free boundary of $v$. To combat this, let $x^+$ be the closest point to 
$x_0$ in $\partial \{v > 0\}$ and $x^-$ the closest point to $x_0$ in $\partial \{v < 0\}$. 
We note that \eqref{e:squeezed} implies that $|x_0 - x^{\pm}| < r^{1+\alpha/2}$. 
Let $\rho = r/2 - \max\{ |x_+-x_0|,  |x_- -x_0|\}$.
Note 
that $B(x^{\pm}, 2\rho) \subset B(x_0, r)$ and
$r > 2\rho > r(1-r^{\alpha/2})$. Also,  $B(x_0, r/2) \supset B(x^{\pm}, \rho)$.
Hence

\begin{equation}
\label{e:Wvnottoosmall1} 
 W(v, x_0, r/2)\geq  W(v^+, x^+, \rho) + W(v^-, x^-, \rho)- Cr^{\alpha/2},
 \end{equation}
because
$|2\rho/r - 1| < r^{\alpha/2}$ and 
$v$ is Lipschitz in $B(x_0, r/2)$ with a constant controlled by 
$\frac{1}{r^n}\int_{B(x_0, r)} |\nabla v|^2  \leq \frac{1}{r^n} J_{B(x_0, r)}(u)$, 
the latter of which is bounded by the Lipschitz norm of $u$ and the supremums of $q_{\pm}$. 

To estimate each term in the summand on \eqref{e:Wvnottoosmall1}, we think of $v^+, v^-$ is being separate critical points to the \emph{one-phase problems} associated to weights $\tilde{q}_{\pm}|_{\partial \{v^{\pm} > 0\}} := \partial_{\nu^{\pm}} v^\pm|_{\partial \{v^{\pm} > 0\}}$. By the regularity theory of the two-phase problem in \cite{DeSpVe21} and \eqref{e:squeezed} we know that $\partial \{\pm v > 0\}$ are $C^{1,\alpha}$ in $B(x_0, r/2)$. Thus $\tilde{q}_{\pm} \in C^{0,\alpha}(\partial \{v^{\pm} > 0\})$ and can be extended H\"older continuously to all of $B(x_0, r/2)$ (with norm uniform in the constants we care about). 

 The free boundary condition satisfied by $v$ being a minimizer to the two-phase problem tells us that $\tilde{q}_{\pm}:= \partial_{\nu^{\pm}} v^{\pm} = q_{\pm}$ at one-phase points but at two-phase points the free boundary condition only implies that $\tilde{q}_{\pm}:= \partial_{\nu^{\pm}} v^{\pm} \geq q_{\pm}$. These observations tell us that $\|\tilde{q}_{\pm} - q_{\pm}\|_{L^\infty(B(x_0, r/2))} \leq Cr^{\alpha}$.

 By monotonicity, we can underestimate each $$\begin{aligned}W(v^{\pm}, x^{\pm}, \rho) \geq& \frac{1}{\rho^n}\int_{B(x^\pm, \rho)} |\nabla v^{\pm}|^2 + \tilde{q}^2_\pm(x)\chi_{\{\pm v > 0\}}\, dx - \frac{1}{\rho^{n+1}}\int_{\partial B(x^{\pm}, \rho)} (v^{\pm})^2 \, d\sigma - Cr^\alpha\\ \geq& \frac{1}{2}\mathrm{Vol}(B(0,1))\tilde{q}^2_{\pm}(x^{\pm}) -Cr^{\alpha} \\ \geq& \frac{1}{2}\mathrm{Vol}(B(0,1))q^2_{\pm}(x^{\pm}) - Cr^{\alpha} \\ \geq& \frac{1}{2}\mathrm{Vol}(B(0,1))q_{\pm}^2(x_0) - Cr^{\alpha}.\end{aligned}$$

Putting this back into \eqref{e:Wvnottoosmall1}, using monotonicity 
and \eqref{e:Wunottoobig} we get that 
\begin{align*}
W(v, x_0, r) & \geq 
W(v, x_0, r/2) - Cr^{\alpha/2} 
\geq \frac{1}{2}\mathrm{Vol}(B(0,1))(q_+^2(x_0) + q_-^2(x_0)) 
- Cr^{\alpha/2}
\\
& \geq W(u, x_0, r) - Cr^{\alpha/2}.
\end{align*} Multiplying through by $r^n$ gives the almost-minimization inequality. 

So to finish Case 1, it suffices to prove \eqref{e:squeezed}. Here is where we apply Lemma \ref{l:slidingbarriers}. We do this on the interval $[a,b] = [r^{1+\alpha/2}, 3r]$ (recall that we can take $r$ small so that $3r < 1$). To simplify notation, let us assume that $x_0 = 0$ and $e = e_n$. We then create the family of sub/super-solutions to the two phase problem in $B(x_0,r)$ defined by $$\overline{w}_t = M_+(x_n -t)^+ - m_-(x_n-t)^-, \qquad \underline{w}_t = m_+(x_n + t)^+ - M_-(x_n +t)^-,$$ where $m_\pm = \min_{B(x_0, r)} q_\pm$ and $M_\pm = \max_{B(x_0, r)} q_{\pm}$. Let $v$ be a minimizer to the two phase problem in $B(x_0, r)$ with boundary values $u$.

We first verify condition (5) in Lemma \ref{l:slidingbarriers}. We note that for any $x\in B(x_0, r)$ $$\overline{w}_{3r}(x) \leq -2m_-r \leq -2q_-(x_0)r +Cr^{1+\alpha} \stackrel{\eqref{e:blowuptp}}{\leq} \inf_{\partial B(x_0, r)} u = \inf_{\partial B(x_0,r)} v \leq v(x)$$ where the last inequality follows from the minimum principle applied to the superharmonic function $-v^-$ and the middle inequalities follow from the $C^{0,\alpha}$ continuity of $q_-$ and the fact that we can take $r_0 \ll \inf_{\overline{B(0, R)}} q_-$. 

 To check condition (6) in Lemma \ref{l:slidingbarriers} we will show that 
 $\overline{w}_t|_{\partial B(x_0, r)} < u|_{\partial B(x_0, r)}$ 
 for all $r^{1+7\alpha/8}< r^{1+\alpha/2} \leq t \leq 3r$. When $x_n > t$ we have 

 $$\begin{aligned} \overline{w}_t(x) = M_+(x_n -t) \leq& q_+(x_0)(x_n - t) + (\mathrm{osc} q_+)(|x_n| + |t|)\\ \leq& q_+(x_0) x_n -q_+(x_0)r^{1+\alpha/2} + Cr^{1+\alpha} < q_+(x_0)x_n - r^{1+7\alpha/8} \stackrel{\eqref{e:blowuptp}}{\leq} u(x),\end{aligned}$$ again assuming $r_0$ is small enough. 
 
 When $0 \leq x_n < t$ we note that $\overline{w}_t(x) < 0$. So there is only something to prove if $u(x) < 0\stackrel{\eqref{e:blowuptp}}{\Rightarrow} x_n < Cr^{1+7\alpha/8}$. In this case $x_n < t/2$ (for $r$ small enough) and we have $$\overline{w}_t(x) \leq m_-(x_n -t)  \leq - \frac{m_-}{2}t \leq -\frac{m_-}{2}r^{1+\alpha/2} <  -r^{1+7\alpha/8} \stackrel{\eqref{e:blowuptp}}{\leq} u(x).$$
 
 When $x_n < 0$ we have $$\overline{w}_t(x) = m_-(x_n -t) \leq q_-(x_0)(x_n -t) + Cr^{1+\alpha} \leq q_-(x_0) x_n - Cr^{1+\alpha/2} \stackrel{\eqref{e:blowuptp}}{<} u(x).$$
 
 Condition (1) in Lemma \ref{l:slidingbarriers} follows from the fact that $\overline{w}_t$ is linear where it is not zero. Conditions (2) and (3) follow from the observation that $\{\overline{w}_t = 0\} = \{x_n = t\}$ which moves continuously with $t$. Finally the free boundary condition, Condition (4), follows from the definition of $M_{\pm}, m_{\pm}$. 
 
Thus by Lemma \ref{l:slidingbarriers} above (which
will be checked below)
it must be the case that $\overline{w}_{r^{1+\alpha/2}} \leq v$ in all of $B(x_0, r)$. A similar argument shows that $\underline{w}_{r^{1+\alpha/2}} \geq v$ in all of $B(x_0, r)$. Thus $\partial\{v > 0\}, \partial \{v < 0\} \subset \{x\in B(x_0, r)\mid |x_n| \leq r^{1+\alpha/2}\}$.

\medskip

\noindent {\bf Case 2:} Now we assume that $x_0 \in \Gamma^+ \cup \Gamma^-$. Let $M > 1$ to be chosen later. If $R$ is such that $B(x_0, Mr) \cap (\Gamma^+\cap \Gamma^-) \neq \emptyset$ then note that $B(x_0, r) \subset B(y, 2Mr)$, for some $y\in \Gamma^+\cap \Gamma^-$. As such, if $u = v$ in $B(0, R) \backslash B(x_0, r)$ then $u = v$ in $B(0, R) \backslash B(y, 2Mr)$. So Case 1 tell us that $u$ satisfies the almost-minimization criterion in these balls with some $\tilde{C} = C(2M)^{n+\alpha/2}$ (where $C > 0$ comes from Case 1). 

If $B(x_0, Mr) \cap (\Gamma^+\cap \Gamma^-) = \emptyset$ we assume, without loss of generality, that $x_0 \in \Gamma^+ \backslash \Gamma^-$ and we assume that $B(x_0, r)\cap \Gamma^- \neq \emptyset$ (otherwise $u^-$ is identically zero 
on $B(x_0,r)$
and the analysis is even easier). 
Pick
$y\in B(x_0, r)\cap \Gamma^-$. As long as $M > 3$ then $B(y, 2r) \cap (\Gamma^+\cap \Gamma^-)  = \emptyset$ and $u^+, u^-$ 
satisfy one-phase versions of \eqref{e:blowuptp} in $B(x_0, 2r)$ 
and $B(y, 2r)$ respectively. To show that $u^+$ and $u^-$ each satisfy the correct energy inequalities for the one-phase versions of $J$ inside of $B(x_0, r)$ and $B(y, r)$ we proceed as in Case 1 (but the argument is simpler because we need only care about one-phase functionals). The almost-minimization property for $u$ in $B(x_0, r)$ for the two-phase functional then follows since the two-phase energy is the sum of the one-phase energies. We omit these details. 

\medskip 

\noindent {\bf Case 3:} Assume that $x_0 \notin \Gamma^+ \cup \Gamma^-$.  Let $M > 1$ be as above (perhaps enlarge a bit, to be determined later) and let $\varepsilon \ll 1$ (to be determined later). If $B(x_0, Mr^{1-\varepsilon}) \cap (\Gamma^+ \cup \Gamma^-) \neq \emptyset$ then the almost minimization criterion in $B(y, 2Mr^{1-\varepsilon})$\footnote{It may be surprising that we break scaling by considering a radius of size $r^{1-\varepsilon}$ but we are merely taking advantage of the lack of scale invariance in the characterization of almost-minimizers} for some $y\in \Gamma^+\cup\Gamma^-$ shows that $$J_{B(x_0, r)}(u) - J_{B(x_0, r)}(v)  \leq C(2Mr^{1-\varepsilon})^{n+\alpha/2} = C_{M} r^{(1-\varepsilon)(n+\alpha/2)} \leq C_M r^{n+\alpha/4},$$ for any $u = v$ outside of $B(x_0, r)$. Note this last inequality above holds for $r< 1$ as long as $(1-\varepsilon)(n+\alpha/2) \geq n+\alpha/4 \Leftrightarrow \frac{\alpha}{2(2n+\alpha)} \geq \varepsilon$.

Thus we may assume that $B(x_0, Mr^{1-\varepsilon})\cap (\Gamma^+ \cup \Gamma^-) = \emptyset$. If $B(x_0, r) \subset \{u =0\}$ the almost-minimization criterion follows from the fact that $u \equiv 0$ in $B(x_0, r)$. So we may assume that $B(x_0, Mr^{1-\varepsilon}) \subset \{u > 0\}$ (the negative phase follows similarly). 

 Let $v$ minimize $J_{B(x_0,r)}$ with boundary values $u|_{\partial B(x_0, r)}$. Let us first assume that $v$ is simply a harmonic function in $B(x_0, r)$ (this will be the case when $v > 0$).  We compute that $$\int_{B(x_0, r)} |\nabla u|^2 - |\nabla v|^2 = -\int_{B(x_0,r)} |\nabla (u-v)|^2 = \int_{B(x_0,r)} \Delta u (u-v).$$ We note that 
 \[
 u-v <  
 \max_{B(x_0, r)} u - \min_{B(x_0, r)} v= \max_{B(x_0, r)} u - \min_{\partial B(x_0, r)} u 
 < \mathrm{osc}_{B(x_0, r)} u < Cr,\] where we used the maximum principle and the assumption on $|\nabla u|$.  Similarly with a lower bound so that $\sup_{B(x_0,r)} |u-v| \leq Cr$. By our assumptions $$|\Delta u(z)| \leq \|\nabla^2 u(z)\| \leq C \mathrm{dist}(z, \{u = 0\})^{-1} \leq Cr^{\varepsilon-1},\qquad \forall z\in B(x_0,r).$$ 
 Note the last inequality above is because $B(x_0, Mr^{1-\varepsilon}) \subset \{u > 0\}$.  Putting all of this together we get that \begin{equation}\label{e:harmv}\int_{B(x_0, r)} |\nabla u|^2 - |\nabla v|^2 \leq Cr^{n+\varepsilon}.\end{equation}
 
If $v$ does vanish, the maximum principle still says that $v \geq 0$. Let $h$ be the harmonic extension of $u|_{B(x_0, r)}$ into $B:= B(x_0, r)$ and compute, recalling that $h > 0$, $$\begin{aligned} J_B(v) - J_B(h) &\geq -(\sup_B q^2_+)|\{v= 0\}\cap B| + \int_B |\nabla v|^2 - |\nabla h|^2 \\
&= -(\sup_B q^2_+)|\{v = 0\}\cap B|+ \int_B |\nabla (v-h)|^2\\ =& -(\sup_B q^2_+)|\{v = 0\}\cap B| + \int_{\partial \{v > 0\}\cap B} \partial_\nu v h\, d\mathcal H^{n-1}\\ \geq&-(\sup_B q^2_+)|\{v = 0\}\cap B| + (\inf_{B} h) (\inf_B q_+) \mathcal H^{n-1}(\partial \{v = 0\}\cap B)\\ =& -(\mathrm{osc}_Bq_+^2)|\{v=0\}\cap B|\\ +& (\inf_B q_+)\left((\inf_{\partial B} u) \mathcal H^{n-1}(\partial \{v = 0\}\cap B) - \inf_Bq_+ |\{v =0\}\cap B|\right).\end{aligned}$$

We apply the isoperimetric inequality to $\{v=0\}\cap B$ (recall that $v > 0$ on $\partial B$) and estimate $\inf_{\partial B}u \geq C^{-1}(\mathrm{dist}(x_0, \partial \{u >0\}) - r) \geq \tilde{C}^{-1}r^{1-\varepsilon}$ to get $$J_B(v) - J_B(h) \geq -Cr^{\alpha+n} + C|\{v =0\}\cap B|^{\frac{n-1}{n}}(r^{1-\varepsilon} -c|\{v =0\}\cap B|^{1-\frac{1}{n}}).$$ As long as $\varepsilon < 1/n$ and $r < r_0$ is small enough we have $J_B(v) - J_B(h) \geq -Cr^{\alpha+n}$, which combined with \eqref{e:harmv} (which compares $u$ to its harmonic extension) gives us the inequality we want.


\end{proof}

Although Lemma \ref{l:slidingbarriers}
is really a minor modification of \cite[Lemma 2.3]{DeJe09} (see also \cite{Caf1}),
we give a proof for the convenience of the reader.
\begin{proof}[Proof of Lemma \ref{l:slidingbarriers}]
Let $E  =\{\rho \in [a,b]\mid \phi_\rho \leq v\}$. Since $\rho \mapsto \phi_\rho(x_0)$ is continuous for every $x_0 \in \overline{B}$ it must be the case that $E$ is a closed subset of $[a,b]$. Furthermore $b\in E$ by assumption. Thus we will have shown that $E = [a,b]$ (and we will be done) if we can show that $E$ is (relatively) open in $[a,b]$. 

We first observe that for all $t\in E$ we have $\overline{\{\phi_t > 0\}} \cap \overline{B} \subset \overline{B} \cap \{v > 0\}$. Indeed, otherwise the zero sets of $\phi_t$ and $v$ would touch in the interior of $B$.  If $\phi_{t}(x_1) = v(x_1) = 0$ then it must be the case that $x_1 \in \partial \{v > 0\}$ (since 
$\phi_{t} \leq v$ and
$x_1 \in \partial \{\phi_{t} > 0\}$ by 
the condition (2)
on the zero set of $\phi_t$). If $x_1$ is a two-phase point for $v$ then Condition (4) implies that $\partial_{\nu^+} \phi_{t}(x_1) \geq \partial_{\nu^+} v(x_1)$ or $\partial_{\nu^-}\phi_{t}(x_1) \leq \partial_{\nu^-}v(x_1)$. Either way we get a contradiction to the Hopf maximum principle. If $x_1$ is a one-phase point for $v$ then $\partial_{\nu^+}\phi_{t}(x_1) \geq q_+(x_1) = \partial_{\nu^+}v(x_1)$ and we again get a contradiction by the Hopf maximum principle. Note in both cases we use the condition that $\phi_{t} < v$ on the boundary whenever they are both non-zero to conclude that $v-\phi_{t} \neq 0$ and thus the normal derivatives above must have a strict sign. 

Since for all $t_0\in E$ we have $\overline{\{\phi_{t_0} > 0\}} \cap \overline{B} \subset \overline{B} \cap \{v > 0\}$ and since $t\mapsto \partial \{\phi_t > 0\}$ is continuous there exists a $\varepsilon > 0$ such that if $|\tilde{t}-t_0| < \varepsilon$ we have $\overline{\{\phi_{\tilde{t}} > 0\}} \cap \overline{B} \subset \overline{B} \cap \{v > 0\}$. 

Thus in $\{\phi_{\tilde{t}} > 0\}$ we have that both functions are harmonic and $v \geq \phi_{\tilde{t}}$ on $\partial (\{\phi_{\tilde{t}} > 0\}\cap B) = (\partial B \cap \{\phi_{\tilde{t}} > 0\})\cup (\partial \{\phi_{\tilde{t}} > 0\}\cap \overline{B})$. Thus $v \geq \phi_{\tilde{t}}$ on this set. Furthermore $v > 0 = \phi_{\tilde{t}}$ on $\{\phi_{\tilde{t}} = 0\} \cap B$ (by the assumption that the zero set is the free boundary). 

Finally, on the set $\{v \leq 0\}$ we have $\phi_{\tilde{t}} < 0$ so the difference $v - \phi_{\tilde{t}}$ is superharmonic on $\{v \leq 0\}$. On the boundary of this set we have $v- \phi_{\tilde{t}} \geq 0$ so by the maximum principle we have $v \geq \phi_{\tilde{t}}$ on this set and putting this together with the previous parts on all $\overline{B}$. 
This shows
$E$ is open and we are done. 
\end{proof}

We are ready to finish by constructing the functions satisfying the hypothesis of Lemma~\ref{l:fbam}. First we recall the definition of the regularized distance function, introduced in \cite{DaFeMa19}:

\begin{equation}\label{e:regdist}
D_{\mu, \beta}(x) := \left(\int \frac{1}{|x-y|^{d+\beta}}d\mu(y)\right)^{-1/\beta}.
\end{equation}

We use the following facts about this regularized distance, see \cite{DaFeMa19, DaEnMa21}:

\begin{lemma}\label{l:regdistproperties}
Let $\beta > 0$ and let $\mu$ be a $d$-Ahlfors regular measure. Then $$D_{\mu,\beta} \in C^\infty(\mathbb R^n \backslash \mathrm{spt} \mu)\cap C(\mathbb R^n)$$ satisfies the following estimates with $C_1 > 0$ depending on $n, \beta, d$ and the Ahlfors regularity character of $\mu$: \begin{equation}\label{e:rdist2}\begin{aligned} C_1^{-1}\mathrm{dist}(x, \mathrm{spt}\mu) \leq D_{\mu, \beta}(x) \leq& C_1\mathrm{dist}(x, \mathrm{spt} \mu)\\
\|\nabla D_{\mu, \beta}(x) \| \leq& C_1\\
\|D^2 D_{\mu, \beta}(x) \| \leq& C_1\mathrm{dist}(x, \mathrm{spt} \mu)^{-1}.\end{aligned}\end{equation}

Furthermore, there is some dimensional constant $c = c(n, d,\beta) > 0$ such that if $\mathrm{spt} \mu$ is given by a $C^2$-submanifold and $\Theta^d(\mu, Q) := \mathrm{lim}_{r\downarrow 0} \frac{\mu(B(Q,r))}{r^d}$ satisfies $\Theta^d(\mu, Q)^{-1/\beta} \in C^{0,\alpha}(\mathrm{spt}(\mu))$ then $|\nabla D_{\mu,\beta}| \in C^{0,\alpha}(\mathbb R^n)$ and on the support of $\mu$ we have $|\nabla D_{\mu, \beta}| = c \Theta^d(\mu, Q)^{-1/\beta}$.
\end{lemma}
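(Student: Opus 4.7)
The plan is to derive all statements from direct calculation with the defining integral, collecting facts proved in \cite{DaFeMa19, DaEnMa21}. Write $R_{\mu,\beta}(x) := \int |x-y|^{-d-\beta} d\mu(y)$, so that $D_{\mu,\beta} = R_{\mu,\beta}^{-1/\beta}$, and set $\delta(x) := \mathrm{dist}(x, \mathrm{spt}\,\mu)$. The main estimate is that for every $s \geq 0$,
\[
\int |x-y|^{-d-\beta-s} d\mu(y) \leq C\, \delta(x)^{-\beta-s},
\]
obtained by decomposing $\mathrm{spt}\,\mu$ into dyadic annuli $B(x, 2^{k+1}\delta(x)) \setminus B(x, 2^k\delta(x))$ for $k \geq 0$, applying upper Ahlfors regularity on each one, and summing the geometric series.

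First, differentiation under the integral sign (justified by the estimate above applied with all $s$) gives $D_{\mu,\beta} \in C^\infty(\mathbb{R}^n \setminus \mathrm{spt}\,\mu)$. The case $s = 0$ gives $R_{\mu,\beta}(x) \leq C \delta(x)^{-\beta}$, equivalently $D_{\mu,\beta}(x) \geq C^{-1}\delta(x)$. For the matching upper bound, pick $y_0 \in \mathrm{spt}\,\mu$ with $|x - y_0| = \delta(x)$; lower Ahlfors regularity gives $\mu(B(y_0, \delta(x))) \geq c\, \delta(x)^d$, and the kernel is at least $(2\delta(x))^{-d-\beta}$ on this ball, so $R_{\mu,\beta}(x) \geq c' \delta(x)^{-\beta}$, i.e. $D_{\mu,\beta}(x) \leq C \delta(x)$. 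Continuity of $D_{\mu,\beta}$ across $\mathrm{spt}\,\mu$ is then immediate.

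The derivative bounds come from the chain rule. Differentiating under the integral and applying the main estimate with $s = 1$ and $s = 2$ yields $|\nabla R_{\mu,\beta}(x)| \leq C \delta(x)^{-\beta-1}$ and $\|D^2 R_{\mu,\beta}(x)\| \leq C \delta(x)^{-\beta-2}$. Since $R_{\mu,\beta}(x) \geq c\, \delta(x)^{-\beta}$ from the previous paragraph, the identity $\nabla D_{\mu,\beta} = -\tfrac{1}{\beta} R_{\mu,\beta}^{-1/\beta - 1} \nabla R_{\mu,\beta}$ gives
\[
|\nabla D_{\mu,\beta}(x)| \leq C\, \delta(x)^{\beta+1} \cdot \delta(x)^{-\beta - 1} = C,
\]
and a second application of Leibniz and the chain rule yields $\|D^2 D_{\mu,\beta}(x)\| \leq C \delta(x)^{-1}$ once the powers of $\delta(x)$ cancel correctly.

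The main obstacle is the last assertion, for which pointwise estimation is insufficient; for this I would invoke the main results of \cite{DaEnMa21}. The idea is to blow up at $Q \in \mathrm{spt}\,\mu$: since $\mathrm{spt}\,\mu$ is $C^2$, rescaled copies of $\mu$ converge to $\Theta^d(\mu, Q)\, \mathcal{H}^d$ restricted to the tangent plane $T_Q \mathrm{spt}\,\mu$, and for this flat weighted measure one computes $D_{\mu,\beta}$ explicitly (the integral reduces to a one-parameter beta-type integral in the normal coordinate) and reads off $|\nabla D_{\mu,\beta}| \equiv c\, \Theta^d(\mu,Q)^{-1/\beta}$ on $T_Q\mathrm{spt}\,\mu$, with $c = c(n,d,\beta)$. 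Transferring this identity back to $\mathrm{spt}\,\mu$ requires quantifying the rate of convergence in the blow-up, which is where the $C^2$ regularity of the support is used. The H\"older continuity of $|\nabla D_{\mu,\beta}|$ up to and across $\mathrm{spt}\,\mu$ is then assembled from this boundary identification, the assumed $C^{0,\alpha}$ control on $\Theta^d(\mu,\cdot)^{-1/\beta}$, and the interior derivative bounds already established.
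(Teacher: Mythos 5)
Your proposal is correct and follows essentially the same route as the paper: the estimates in \eqref{e:rdist2} come from the Ahlfors regularity of $\mu$ (you spell out the dyadic-annulus computation that the paper dismisses as ``straightforward''), and the identification $|\nabla D_{\mu,\beta}| = c\,\Theta^d(\mu,\cdot)^{-1/\beta}$ on $\mathrm{spt}\,\mu$ together with the H\"older extension is, in both arguments, deferred to \cite{DaEnMa21} (the paper cites the $\alpha$-number estimates of \cite[Section 2, eq. (2.19)]{DaEnMa21} as the precise mechanism for the ``rate of convergence in the blow-up'' that you describe). The only difference is one of explicitness, not of method.
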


\begin{proof}
The estimates in \eqref{e:rdist2} are straightfoward applications of the Ahlfors regularity condition (c.f. \cite{DaFeMa19, DaEnMa21}). If $\mathrm{spt}\mu$ has a tangent at $x_0$ and if $\Theta^d(\mu, x_0)$ exists, the fact that the non-tangential limit of $|\nabla D_{\mu, \beta}|$ exists at $x_0$ and is equal to $c\Theta^d(\mu, Q)^{-1/\beta}$ is contained in the proof of \cite[Theorem 5.3]{DaEnMa21}.

Thus to complete the proof of the lemma, it suffices to show that $|\nabla D_{\mu, \beta}|$ extends to $\mathrm{spt}(\mu)$ in a H\"older continuous fashion. This follows from the $C^2$-character of $\mathrm{spt}(\mu)$ and the estimates in \cite[Section 2]{DaEnMa21}; to be slightly more precise the difference between $\nabla D_{\mu, \beta}$ at a point $x \in \mathbb R^n \setminus \mathrm{spt}(\mu)$ and the closest point $Q \in \mathrm{spt}(\mu)$ is controlled by the $\alpha$ numbers (in the sense of Tolsa \cite{Tol09}) of $\mu$ at the point $Q$ and the scale $|x-Q|$ (c.f. \cite[equation (2.19)]{DaEnMa21}). Since, $\Theta^d(\mu, Q)$ and $\mathrm{spt}(\mu)$ are regular, these $\alpha$-numbers decay like $r^\alpha$, which gives the desired result. 
\end{proof}

We are now ready to compute the proof of Theorem \ref{t:fbam}:

\begin{proof}[Proof of Theorem \ref{t:fbam}]
Let $d\mu^{\pm}(Q) = q_{\pm}^{-1}(Q) d\mathcal H^{n-1}|_{\Gamma^{\pm}}(Q)$ and let $u^{\pm} = D_{\mu^{\pm}, 1}$ in $\Omega^{\pm}$ and identically equal to zero elsewhere. Note that $\mu^{\pm}$ are Ahlfors regular by the boundedness and non-degeneracy of $q_{\pm}$ and the $C^2$-character of $f^{\pm}$ (we don't need to worry about issues at infinity because $f^{\pm}$ are constant outside of a large ball). 

The result immediately follows from Lemmas \ref{l:regdistproperties} and \ref{l:fbam}.
\end{proof}

\section{Appendix}

\subsection{Proof of Lemma \ref{slicedmin}}
Fix $-3N\le x \le 3N$. In this section we will study minimizers of \eqref{slice energy} with boundary data $v_N(\pm 1)=\pm f_N(x)$.

We know that the set $\{v_N=0\}$ is an interval, and that $v_N$ is harmonic 
(i.e. affine)
in 
$\{v_N\neq 0\}\cap (-1,1)$.
Hence we can search for $v_N$ among 
the function $v_N$ with boundary data $v_N(\pm 1)=\pm f_N(x)$, with $v_N=0$ on $[a,b]$, 
and which are 
linear on $[-1,a]$ and $[b,1]$:

\begin{align*}
v_N(x)=
\begin{cases}
\frac{f_N(x)}{a+1}(y-a), &\text{ if } -1\le y\le a\\
0, & \text{ if } a\le y\le b\\
\frac{f_N(x)}{1-b}(y-b), &\text{ if } b\le y\le 1.
\end{cases}
\end{align*}

Then 
\[
H(v_N)=\frac{(f_N(x))^2}{1-b}+\frac{(f_N(x))^2}{a+1}+2-(b-a).
\]

We will minimize $G(a,b)=\frac{(f_N(x))^2}{1-b}+\frac{(f_N(x))^2}{a+1}+2-(b-a)$ 
given the constraint $-1<a\le b<1$.

\subsubsection{Assume $f_N(x)\ge 1$}\label{SS>1}
We claim that $G(0,0)\le G(a,b)$ for any $-1<a\le b<1$, hence the minimizer is $v_N(x)=yf_N(x)$.

Notice that $G(0,0)=2(f_N(x))^2+2$, hence it suffices to show
\[
2(f_N(x))^2+2\le (f_N(x))^2\left(\frac{1}{1-b}+\frac{1}{a+1}\right)+2-b+a.
\]
This is equivalent to
\[
b-a\le (f_N(x))^2\left(\frac{1}{1-b}-1 +\frac{1}{a+1}-1\right)=(f_N(x))^2\left(\frac{b}{1-b}-\frac{a}{a+1}\right)
\]
Since $f_N(x)\ge 1$, it suffices to show that \[
b-a\le \frac{b}{1-b}-\frac{a}{a+1}
\]
which holds since $-1<a$ and $b<1$.

\subsubsection{Assume $0\le f_N(x)< 1$}\label{SS<1}
Looking for critical points of $G$, one looks for solutions of 
\[
\frac{\partial G}{\partial a}(a,b)=-\frac{(f_N(x))^2}{(1+a)^2}+1=0, \ 
\frac{\partial G}{\partial b}(a,b)=\frac{(f_N(x))^2}{(1-b)^2}-1=0,
\]
leading to 
$(1+a)^2 = (1-b)^2 = (f_N(x))^2$. Now $1+a = -f_N(x)$ and $1-b = -f_N(x)$
are impossible (unless $f_N(x)=0$) because $-1<a \leq b<1$, so 
$a=f_N(x)-1$ and $b=1-f_N(x)$.
Notice that $a\le b$ is only satisfied when $f_N(x)\le 1$.

We have $G(f_N(x)-1,1-f_N(x))=4f_N(x)$. Let us now show $4f_N(x)\le G(a,b)$ for any $-1<a\le b<1$.
It suffices to show
\[
2f_N(x)\le \frac{(f_N(x))^2}{1-b}+1-b, \ \ \text{ and } \  \ 2f_N(x)\le \frac{(f_N(x))^2}{a+1}+1+a,
\]
which are both true, as the minimizers of the right-hand sides occur when $b=1-f_N(x)$ and $a=f_N(x)-1$.


Hence the minimizer is $v_N(y)=\text{sgn}(y)(|y|-1+f_N(x))_+.$

\subsection{Energy bounds when $x\in [-N, N]$} \label{SS:X2}
In this subsection we want to provide some crucial bounds on $H_x(u_N(x, -))$ when $x\in [-N, N]$. Notice that if $x\in [-N, N]$, $v_N(x,y)=\text{sgn}(y)(|y|-\alpha)_+$. Therefore $H_x(v_N(x,\cdot))=4(1-\alpha)$. We want to show that 
if $u_N$ is large and 
$v_N$ is zero, then the slice energy of $u_N$ is quantitatively bigger than that of $v_N$.

More precisely we have the following claim:

\begin{claim}\label{cl:energybound}
Let $x\in [-N, N]\backslash X_0$ and assume that $u_N(x,y) > \beta > 0$ (or $u_N(x,y) < -\beta < 0$) for some $y < \alpha/2$. Then there exists $\eta = \eta(\alpha, \beta) > 0$ such that $H(u_N(x,\cdot))\ge H(v_N(x,\cdot))+\eta$.
\end{claim}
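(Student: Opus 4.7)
\emph{Plan of proof.} The goal is to prove a quantitative stability version of the slice minimization already carried out in Section~\ref{SS<1}: any competitor $w = u_N(x,\cdot) \in W^{1,2}([-1,1])$ with $w(\pm 1) = \pm(1-\alpha)$ that takes a value $>\beta$ at some point below $y = \alpha/2$ must satisfy $H_x(w) \ge H_x(v_N(x,\cdot)) + \eta$ with $\eta = \eta(\alpha,\beta) > 0$. The ``mirror'' case $u_N(x,y) < -\beta$ will follow from the symmetry $y \mapsto -y$, $w \mapsto -w(-y)$, which preserves both boundary data and $H_x$, so I focus on a positive excursion.

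The first step is a structural decomposition. By the intermediate value theorem and the boundary data, the set $\{y\in(-1,1) : w(y)=0\}$ is non-empty; let $a$ and $b$ be its infimum and supremum. Since $w(-1) < 0 < w(1)$ and $w$ has constant sign on $[-1,a)$ and on $(b,1]$, one gets $w < 0$ on $[-1,a)$ and $w > 0$ on $(b,1]$; in particular the measure of $\{w\neq 0\}$ on each of these outer intervals equals its full length.

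The second step is to lower-bound the slice energy by handling the two outer intervals through Jensen's inequality (exactly as in the proof of Lemma~\ref{error}) while leaving the $[a,b]$ contribution intact. Using the elementary identity $g(t) - g(1-\alpha) = (t-(1-\alpha))^2/t$ for $g(t) = (1-\alpha)^2/t + t$, this yields
\begin{equation*}
H_x(w) \;\ge\; 4(1-\alpha) + \frac{(a+\alpha)^2}{a+1} + \frac{(b-\alpha)^2}{1-b} + \int_a^b |w'|^2\, dy,
\end{equation*}
which is sharp at $a=-\alpha$, $b=\alpha$, recovering $H_x(v_N) = 4(1-\alpha)$.

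The final step is a short case analysis driven by the hypothesis $y_0 < \alpha/2$. If $y_0 \in (a,b)$, then $w(a) = w(b) = 0$ together with $w(y_0)>\beta$ force, via two applications of Jensen on $[a,y_0]$ and $[y_0,b]$, the bound $\int_a^b |w'|^2 \ge 4\beta^2/(b-a) \ge 2\beta^2$ (since $b-a\le 2$). If instead $y_0 \ge b$, then $b \le y_0 < \alpha/2$ and the deviation term $(b-\alpha)^2/(1-b)$ is at least $\alpha^2/4$. The remaining subcase $y_0 \le a$ is vacuous, as $w$ is strictly negative on $[-1,a)$. Setting $\eta := \min\{2\beta^2, \alpha^2/4\}$ then finishes the proof. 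I do not anticipate any serious obstacle here: the only mild care needed is to avoid losing the full measure of $\{w\neq 0\}$ on the outer intervals when passing to the lower bound, but this follows immediately from the constancy of sign established in the first step, and potential oscillations of $w$ on $[a,b]$ are absorbed automatically by Jensen between two zeros.
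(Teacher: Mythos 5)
Your argument is correct and takes a genuinely different, more economical route than the paper's. The paper first proves an intermediate bound $H(u_N(x,\cdot))\ge H(v_N(x,\cdot))+\frac{(\delta-2\alpha)^2}{2-\delta}$ in terms of the full measure $\delta$ of the zero set, disposes of the case $\delta\le\alpha/2$, and then runs a two-case analysis (a zero of $u_N(x,\cdot)$ above $y_0$ versus all zeros below $y_0$) involving the minimization of two-variable functions over constrained domains, with further subcases. You instead anchor everything at the two extremal zeros $a=\inf\{w=0\}$, $b=\sup\{w=0\}$, use the identity $g(t)=g(1-\alpha)+(t-(1-\alpha))^2/t$ to isolate the nonnegative deviation terms $(a+\alpha)^2/(a+1)$ and $(b-\alpha)^2/(1-b)$, and finish with a trichotomy on the position of $y_0$ relative to $[a,b]$; the only tools are Jensen and $1/s+1/t\ge 4/(s+t)$. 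This buys brevity and transparency at the cost of slightly weaker constants (you get $2\beta^2$ where the paper's Case 1 yields $4\beta$), which is immaterial for the claim. Two small points. First, in the subcase $y_0\ge b$ you may only conclude $(b-\alpha)^2/(1-b)\ge\alpha^2/8$ rather than $\alpha^2/4$, since $1-b$ can be as large as $2$ when $b<0$; take $\eta=\min\{2\beta^2,\alpha^2/8\}$. Second, your reflection $w\mapsto -w(-\cdot)$ converts ``$w(y_0)<-\beta$ for some $y_0>-\alpha/2$'' into the positive case; read literally, the claim's hypothesis for the negative alternative (``for some $y<\alpha/2$'') would be vacuously satisfied near $y=-1$ and cannot be what is intended, so your symmetric reading is the correct one and matches how the claim is invoked in Lemma \ref{l:doesntdiplow}.
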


We first start with estimates on the energy depending on the size of the zero set of $u_N$: call $A=\{y\in [-1,1] \ : \ u_N(x,y)=0\}$ and $\delta= |A|$, so $0\le \delta< 2$.
We have
\[
H(u_N(x,\cdot))=2-\delta+\int_{-1}^1|\partial_yu_N(x,\cdot)|^2dy
\]
Now, since $f_N(x)=1-\alpha$ for $|x|<N$, by comparing with a linear function we have
\begin{align*}
    \int_{-1}^1|\partial_yu_N(x,\cdot)|^2dy&\ge \frac{(1-\alpha)^2}{\inf A +1}+\frac{(1-\alpha)^2}{1-\sup A}=(1-\alpha)^2
    \left(\frac{1}{1-\sup A}+\frac{1}{\inf A+1}\right).
\end{align*}
Recall that for $x,y>0$, $\frac{1}{x}+\frac{1}{y}\ge \frac{4}{x+y}$, with equality only if $x=y$. Applying this inequality with $x=1-\sup A$ and $y=\inf A+1$ we conclude
\begin{align*}
\int_{-1}^1|\partial_yu_N(x,\cdot)|^2dy&\ge (1-\alpha)^2\frac{4}{2-\sup A+\inf A}=\frac{4(1-\alpha)^2}{2-(\sup A-\inf A)}
\ge \frac{4(1-\alpha)^2}{2-|A|}\\
&=\frac{4(1-\alpha)^2}{2-\delta}.
\end{align*}
Therefore
\[
H(u_N(x,\cdot))\ge 2-\delta+\frac{4(1-\alpha)^2}{2-\delta} = 4(1-\alpha) + \frac{(2(1-\alpha) - (2-\delta))^2}{2-\delta}.
\]
Rearranging we get by Lemma \ref{slicedmin}:

\begin{equation}\label{e:energy}
    H(u_N(x,\cdot)) \geq H(v_N(x, \cdot)) + \frac{(\delta - 2\alpha)^2}{2-\delta}.
\end{equation}
Notice that if $\delta\le \alpha/2$, \eqref{e:energy} gives $H(u_N(x,\cdot))\ge H(v_N(x,\cdot))+\frac{\alpha^2}{2}$, hence we can assume $\delta > \alpha/2$.

Now we assume there is a point $(x, y_0)$ with $y_0 < \alpha/2$ and $u_N(x, y_0) > \beta > 0$ (the case where it is negative follows similarly). We have two cases:

\noindent {\bf Case 1:} Assume there exists $\tilde{y} > y_0$ with $u_N(x, \tilde{y}) = 0$. Let $A_+ = \sup \{y \mid u_N(x, y) = 0\}$, be the largest such value of $y$, and let $A_- = \inf\{y\mid y > y_0, u_N(x, y) = 0\}$ be the smallest such value of $y$. Furthermore let $B_+ = \sup\{y < \tilde{y}\mid u_N(x,y) = 0\}$ and $B_- = \inf\{y\mid u_N(x,y) = 0\}$. We then have that $$H(u_N(x,y)) \geq 2- (A_+ - A_- + B_+ - B_-)+ \frac{\beta^2}{A_--\tilde{y}} + \frac{\beta^2}{\tilde{y} - B_+} + (1-\alpha)^2 \left(\frac{1}{B_- + 1} + \frac{1}{1-A_+}\right).$$ Using again $1/x + 1/y \geq \frac{4}{x+y}$ we have $$H(u_N(x,y)) \geq 2-(A_+ - A_- + B_+ - B_-) + \frac{4\beta^2}{A_--B_+} + \frac{4(1-\alpha)^2}{2-(A_+-B_-)}.$$ Let $z = (A_- - B_+)$ and $w= A_+ - B_-$. We notice that $w-z = A_+ - A_- + B_+-B_- \geq \delta$. Let $G(z,w) := 2-(w-z) + \frac{4\beta^2}{z} + \frac{4(1-\alpha)^2}{2-w}$. We want to find the minimum of $G(z,w)$ on the domain $(z,w) \in [0,2]^2 \cap\{w-z \geq \delta\}$ as this gives a lower bound for $H(u_N(x,y))$. Straight forward calculus shows that the only critical point of $G(z,w)$ is a local minimum at $(z,w) = (2\beta, 2\alpha)$ which gives the lower bound $H(u_N(x,-)) \geq 4(1-\alpha) + 4\beta = H(v_N(x,-)) + 4\beta$. On the boundary of the domain, it is easy to see that $G(z,w) = +\infty$ when $z =0$ or $w = 2$. When $w-z = \delta$ we rewrite the offset equation above, $$\begin{aligned} H(u_N(x,y)) \geq& 2-\delta + \frac{4\beta^2}{z} + \frac{4(1-\alpha)^2}{2-(z+\delta)}\\
\geq& 2-\delta -z + z+ \frac{4\beta^2}{z} + \frac{4(1-\alpha)^2}{2-(z+\delta)}\\
 \geq& 4(1-\alpha) + z + \frac{4\beta^2}{z}\\ +& \frac{-4(1-\alpha)(2-\delta-z) + (2-\delta-z)^2 + 4(1-\alpha)^2}{2-\delta-z}\\
\geq& H(v_N(x,-)) + \frac{4\beta^2}{z} + \frac{(\delta + z -2\alpha)^2}{2-\delta-z} +z\\
\geq& H(v_N(x,-)) + \frac{4\beta^2}{z} + z.\end{aligned}$$

Minimizing this in $z$ we get that $z = 2\beta$, giving us the same lower bound as above, $H(u_N(x,-)) \geq H(v_N(x,-)) + 4\beta$.

\medskip

\noindent {\bf Case 2:} Here we have that the zero set of $u_N(x, -)$ lies totally below $y_0$, i.e. if $A = \{y\mid u_N(x,y) = 0\}$, then $\sup A < y_0$. Calculating as above we have
$$H(u_N(x,-)) \geq 2-\delta + (1-\alpha)^2\left(\frac{1}{\inf A + 1} + \frac{1}{1-\sup A}\right).$$ 

Note that $\inf A\le \sup A \leq y_0 \leq \alpha/2$ and $\sup A-\inf A\ge |A|=\delta$.

Calling $x=\inf A$ and $y=\sup A$ we obtain the bounds $x+\delta\le y\le \frac{\alpha}{2},$ $-1<x\le \frac{\alpha}{2}-\delta$. We want to minimize the function $G(a,b)=\frac{1}{a+1}+\frac{1}{1-b}$ in the domain given by those bounds, recalling that $\frac{\alpha}{2}<\delta$. Notice that as $a$ approaches $-1$, $G(a,b)$ goes to infinity. When $b=\frac{\alpha}{2}$, we minimize $G(a,b)$ when $a=\frac{\alpha}{2}-\delta$. Finally, when $b=a+\delta$, we obtain the function $g(a)=\frac{1}{a+1}+\frac{1}{1-a-\delta}$, with $-1<a\le \frac{\alpha}{2}-\delta$. This function has a minimum at $a=-\frac{\delta}{2}$.

Case a: when  $\frac{\alpha}{2}-\delta<-\frac{\delta}{2}$ (that is, $\frac{\delta}{2}<\frac{\alpha}{2}$), one compares $G(\alpha/2-\delta,\alpha/2)=\frac{1}{1+\frac{\alpha}{2}-\delta}+\frac{1}{1-\frac{\alpha}{2}}$ with $G(-\delta/2, \delta/2)=\frac{2}{1-\frac{\delta}{2}}$ to find 
\[
G(a,b)\ge G(-\delta/2,\delta/2)=\frac{4}{2-\delta}.
\]
In this case we follow the computations of \ref{e:energy}, recalling we are now under the case $\delta<\alpha$, and notice that
\[
H(u_N(x,\cdot))\ge H(v_N(x,\cdot))+\frac{(\delta-2\alpha)^2}{2-\delta}\ge H(v_N(x,\cdot))+\frac{(\delta-2\alpha)^2}{2-\delta}\ge H(v_N(x,\cdot))+\frac{\alpha^2}{2}.
\]

Case b: when $\frac{\alpha}{2}-\delta\le -\frac{\delta}{2}$ (that is, $\frac{\alpha}{2}\le \frac{\delta}{2}$), we conclude the minimum of $G$ is attained when $a=\inf A=\frac{\alpha}{2}-\delta$ and $a=\sup A=\frac{\alpha}{2}$.
Note that $\frac{1}{w} + \frac{1}{z} = \frac{4}{w+z} + \frac{(w-z)^2}{wz(w+z)}$ and apply this to $w = 1-\alpha/2, z= 1+\alpha/2-\delta$. Using the same arguments as above we get that, 
$$H(u_N(x, -)) \geq H(v_N(x,-)) + \frac{(\delta - 2\alpha)^2}{2-\delta} + \frac{(\alpha-\delta)^2}{(1-\delta)(1-\alpha/2)(1+\alpha/2-\delta)}.$$ 
A straightforward calculation shows that $H(u_N(x,-)) \geq H(v_N(x,-)) + \frac{\alpha^2}{10^4}$.

\bibliography{pools}{}
\bibliographystyle{amsalpha}


\end{document}